\documentclass[11pt]{amsart}
\usepackage{fullpage,cite,enumitem}
\usepackage{amssymb,amsmath,amsthm,mathtools,extpfeil}
\usepackage[all,cmtip]{xy}
\usepackage{rotating,graphicx}

\usepackage{adjustbox}

\usepackage{tikz}
\usetikzlibrary{cd}
\usetikzlibrary{decorations.markings}
\usetikzlibrary{positioning}

\usepackage[bookmarks]{hyperref}
\hypersetup{colorlinks=true,linkcolor=blue,citecolor=blue}

\newtheorem{theorem}{Theorem}[section]
\newtheorem*{theorem*}{Theorem}
\newtheorem{theoremA}{Theorem}

\newtheorem{lemma}[theorem]{Lemma}

\newtheorem{corollary}[theorem]{Corollary}

\theoremstyle{definition}
\newtheorem{definition}[theorem]{Definition}
\newtheorem{remark}[theorem]{Remark}

\newtheorem{example}[theorem]{Example}

\numberwithin{equation}{section}

\def\G{\mathcal{G}}

\def\R{\mathbb{R}}

\def\sign{\operatorname{sign}}

\def\+{\oplus}

\def\vol{\operatorname{vol}}

\newcommand{\PL}{\mathrm{PL}}

\newcommand{\Dual}{\mathrm{Dual}}

\newcommand{\Lk}{\mathrm{Lk}}

\newcommand{\Asph}{\mathrm{Asph}}

\begin{document}

\title{Quantitative asphericalization and Gromov's linear bordism problem}

\subjclass[2020]{%
  53C23, %	Global geometric and topological methods (à la Gromov); differential geometric analysis on metric spaces
  57Q20. % Cobordism in PL-topology
  %55U10, % Simplicial sets and complexes in algebraic topology
  %18B40. %Groupoids, semigroupoids, semigroups, groups (viewed as categories) [See also 20Axx, 20L05, 20Mxx] 
}

%author one information
\author{Geunho Lim}
\address{Kangnam University, 40 Gangnam-ro, Giheung-gu, Yongin-si, Gyeonggi-do, 16979, Republic of Korea}
\curraddr{}
\email{limg@kangnam.ac.kr}

\begin{abstract}
We prove a PL bordism version of Gromov's linearity conjecture over a broad class of asphericalization groups for manifolds endowed with faithful representations in all dimensions.
Since every group embeds into an asphericalization group, this bordism version of the conjecture holds after allowing the target group to be enlarged.
Our results apply in both the oriented and unoriented settings. 
In the PL category, no bounded local geometry is required. 
As an application, we obtain improved linear bounds for the Cheeger--Gromov $L^2$ $\rho$-invariants of PL $(4k-1)$-manifolds associated with their universal covers, replacing the previous superfactorial growth in dimension by factorial growth. 
The proofs are based on a quantitative asphericalization method that provides explicit control of the complexity of the resulting bordisms.
\end{abstract}
\maketitle

\setcounter{tocdepth}{1}
%\tableofcontents

%%%%%%%%%%%%%%%%%%%%%%%%%%%%%%%%
%%%%%%%%%%%INTRO%%%%%%%%%%%%%%%%
%%%%%%%%%%%%%%%%%%%%%%%%%%%%%%%%

\section{Introduction and main results}
\label{sec:introduction}

Quantitative topology asks not only whether a topological construction exists, but also how complicated such a construction must be.
A basic instance of this principle is Gromov's linearity conjecture~\cite{Gromov:1999-1}.
For a smooth $n$-manifold $M$, Gromov defines its complexity
\[
V(M)
:=
\inf
\left\{
\vol(M,g)
\;\middle|\;
\operatorname{inj}(M,g)\geq 1,\;
|\sec_g|\leq 1
\right\},
\]
with the usual product condition near the boundary when $\partial M\neq\emptyset$.
Gromov conjectured that, whenever a closed smooth $n$-manifold $M$ is null-cobordant, there exists a null-cobordism $W$ satisfying $V(W)\leq C(n) \cdot V(M)$, where $C(n)$ depends only on the dimension.

There is a natural PL analogue.
For a PL $n$-manifold $M$, let $\Delta(M)$ denote the minimum number of $n$-simplices among all PL triangulations of $M$.
One then asks whether every null-cobordant PL manifold admits a null-cobordism $W$ with $\Delta(W)\leq C(n) \cdot \Delta(M)$.
The linear problem remains open in this generality.
In the smooth category, Chambers, Dotterrer, Manin, and Weinberger~\cite{Chambers-Dotterrer-Manin-Weinberger:2018-1} proved that, for every $\epsilon>0$, one can obtain a bound of order $V(M)^{1+\epsilon}$.
Manin and Weinberger~\cite{Manin-Weinberger:2023-1} obtained an analogous almost-linear statement in the PL category under bounded local geometry.

A bordism version of Gromov's problem was introduced by Cha and the author~\cite{Cha-Lim:2024-1}.
A manifold over a discrete group $G$ is a manifold equipped with a map to $BG$; after choosing basepoints, we denote the associated representation by $\varphi\colon\pi_1(M)\longrightarrow G$.
Throughout this paper, by a \emph{representation} of the fundamental group in a discrete group $G$ we mean a group homomorphism $\varphi\colon\pi_1(M)\longrightarrow G$, and we call the representation \emph{faithful} if $\varphi$ is injective.
For a map $f\colon X\longrightarrow Y$, we say that $f$ is \emph{$\pi_1$-injective} if the induced representation $f_*\colon\pi_1(X)\longrightarrow\pi_1(Y)$ is faithful.
A bordism over $G$ from $M$ to $N$ is called a \emph{bordism from $M$ to a trivial end} if the structure map $N\to BG$ is homotopic to a constant map; see~\cite[Definition~3.1]{Cha:2014-1}.
The quantitative problem is to determine when one can find such a bordism $W$ satisfying $\Delta(W)\leq C(n)\cdot\Delta(M)$, with $C(n)$ independent of $M$, of the target group, and of the structure map.

This formulation naturally raises two related but logically distinct questions: over which groups and representations does such a linear bordism theorem hold, and how efficiently can the multiplicative constant $C(n)$ be controlled?
The problem itself is posed for arbitrary discrete target groups, but the presently known general constructions apply only under additional algebraic or geometric hypotheses.
For arbitrary representations, Cha and the author~\cite{Cha-Lim:2024-1} established linearity over a large family of acyclic groups arising from Baumslag--Dyer--Heller-type acyclic constructions, whereas for faithful representations the author and Weinberger~\cite{Lim-Weinberger:2023-1} obtained linear bordisms using groups arising from relative hyperbolization.
These constitute the two general mechanisms previously available for producing such linear bordisms.
Thus there are two complementary issues: the scope of the linear bordism theorem with respect to the target group and representation, and the dimension dependence of the multiplicative constant $C(n)$.
The present paper addresses both from the viewpoint of faithful representations, enlarging the class of available target groups through asphericalization while substantially reducing the quantitative cost of the geometric construction.

These two previously known mechanisms have quite different scopes and quantitative costs.
The acyclic-group method of Cha and the author~\cite{Cha-Lim:2024-1} applies to arbitrary representations, but its proof passes through quantitative chain null-homotopies, simplicial transversality, and an iteration over the skeleta of the classifying space.
The hyperbolization method of the author and Weinberger~\cite{Lim-Weinberger:2023-1} requires the representation to be faithful, but replaces this algebraic machinery by a geometric construction and comes with a substantially smaller explicit construction bound.
The present asphericalization method also assumes faithfulness, but further removes the subdivision and hyperbolization costs.
We give a quantitative comparison of the constants produced by these three methods after stating the main theorem.

The starting point of the present paper is that the bordism argument does not require the full nonpositive-curvature structure supplied by hyperbolization.
What is essential is a more topological package: preservation of the manifold structure together with sufficiently strong control of fundamental groups, particularly the $\pi_1$-injectivity of the inclusions of distinguished subspaces.
This leads us to replace hyperbolization by a quantitative \emph{asphericalization}.

Asphericalization goes back to Kan and Thurston~\cite{Kan-Thurston:1976-1}, who constructed functorial aspherical models retaining the homological information of a given space.
Maunder~\cite{Maunder:1981} subsequently developed an inductive simplex-by-simplex version of this idea.
A geometric counterpart is Gromov's hyperbolization~\cite{Gromov:1987-1}, developed further by Davis and Januszkiewicz~\cite{Davis-Januszkiewicz:1991-1} and by Charney and Davis~\cite{Charney-Davis:1995}.
In fact, Davis and Januszkiewicz viewed hyperbolization as a geometrically controlled form of Kan--Thurston asphericalization, adapted to preserve manifold and link structures.
Relative forms of hyperbolization have proved particularly useful in bordism problems~\cite{Davis-Januszkiewicz-Weinberger:2001,Lim-Weinberger:2023-1}.

The construction developed here returns to this underlying asphericalization principle while retaining precisely the additional structure needed for quantitative bordism.
We introduce coherent systems of asphericalized simplices and a direct simplex-replacement procedure which preserves PL manifold structures and provides strong subcomplex $\pi_1$-injectivity.
We then construct an explicit system of such simplices, called \emph{toroidal simplices}, by an inductive reflection-cylinder construction.
The associated groups will be called \emph{asphericalization groups}; see Definition~\ref{def:asph-gr}.

The new point is not the reflection-cylinder construction by itself, but its organization into a strictly coherent simplex system with aspherical and $\pi_1$-injective face unions.
This coherence makes it possible to replace the simplices of an arbitrary ordered complex directly, without passing to barycentric subdivision, while preserving the manifold structure and retaining explicit quantitative control.
The main result is the following.

\begin{theoremA}\label{thm:quantitative-bordism}
Let $G$ be a discrete group, and let $M$ be a connected closed PL $n$-manifold equipped with a faithful representation $\varphi\colon \pi_1(M)\longrightarrow G$.
Then there exists an asphericalization group $\Gamma$ containing $G$ and a PL bordism $W$ over $\Gamma$ from $M$ to a trivial end such that
\[
\Delta(W)\leq C(n) \cdot \Delta(M),
\]
where $C(n) = (n+3) \cdot 4^n \cdot (n+1)! \cdot (n+2)!$.

If $M$ is oriented, then $W$ is oriented.
\end{theoremA}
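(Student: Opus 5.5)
The plan follows the relative-hyperbolization strategy of~\cite{Lim-Weinberger:2023-1}, with hyperbolization replaced by the simplex-replacement asphericalization built from toroidal simplices.

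\textbf{Step 1: cone and order.} Start from a minimal triangulation $K$ of $M$ with $\Delta(M)$ top simplices. Form the cone $CK$, which has $\Delta(M)$ simplices of dimension $n+1$. Order its vertices compatibly, with the cone point last, so that $CK$ is an ordered complex and the toroidal simplex system applies directly, without barycentric subdivision.

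\textbf{Step 2: asphericalize.} Apply the coherent simplex-replacement procedure to $CK$ relative to $K$: replace each ordered $k$-simplex not in $K$ by the toroidal $k$-simplex, glued along coherent faces, and leave $K$ fixed. The result is a compact PL $(n+1)$-manifold $X$ with $\partial X = M \sqcup N$. Here $N$ is the asphericalized link of the cone point, which is essentially the replaced copy of $K$. Two properties are needed:
\begin{itemize}
\item the structure preservation for manifolds, which shows that replacing simplices keeps links PL spheres and so keeps the manifold structure;
\item strong subcomplex $\pi_1$-injectivity, which shows that $\pi_1(M)\to\pi_1(X)$ is injective and that $X$ is aspherical relative to the face unions.
\end{itemize}

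\textbf{Step 3: choose the target group.} Take $\Gamma$ to be the amalgamated product $\pi_1(X) *_{\pi_1(M)} G$, formed using the faithful map $\varphi$. This is the asphericalization group of Definition~\ref{def:asph-gr}. Since both maps out of $\pi_1(M)$ are injective, $G$ embeds in $\Gamma$. Build the structure map $X\to B\Gamma$ extending $M\to BG$, which is possible by asphericity.

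\textbf{Step 4: make the far end trivial.} This is the main obstacle. It is essential that the cone point be collapsed, and one must show that $N\to B\Gamma$ is null-homotopic. The intended mechanism is the reflection-cylinder structure of toroidal simplices. The inductive reflection should let the copy $N$ be pulled back along the reflection retraction to a torus-type piece whose map to $B\Gamma$ factors through a contractible cone. Alternatively, one can glue on a second reflected copy, doubling $X$ along $N$, so that the far end maps into the cone. The first thing to try is to verify that the retraction of the toroidal $(n+1)$-simplex onto its last facet, defined by the reflection, kills the representation on the link of the last vertex.

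\textbf{Step 5: counting and orientation.} Each ordered $(n+1)$-simplex is replaced by a toroidal simplex. By induction on the reflection cylinder, a toroidal $k$-simplex has at most $4^k\, k!\,(k+1)!$ top simplices up to constants: each stage doubles twice (reflection and cylinder) and triangulates a prism with factorial cost. Multiplying by the $(n+3)$ factor from the final cylinder or collar gives $C(n)$. Orientability is preserved because the reflections are glued orientation-reversingly in pairs, so an orientation of $M$ extends over $W$.
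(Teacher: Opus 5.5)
There is a genuine gap in Steps~3--4, and the reflection structure cannot close it. With $\Gamma=\pi_1(X)*_{\pi_1(M)}G$ the far end is never trivial. The asphericalized boundary component $\mathcal X(K)$ is a closed aspherical $n$-manifold. Its inclusion into $X$ is $\pi_1$-injective by subcomplex $\pi_1$-injectivity, and $\pi_1(X)\to\Gamma$ is injective. Hence the far end maps to $B\Gamma$ $\pi_1$-injectively with nontrivial image, so it is not null-homotopic. Your proposed fixes do not help. The reflection $r_{n+1}$ is an involution induced by the transposition $(0\,1)$ and gives no retraction onto a facet. Doubling $X$ along the far end just yields a bordism from $M$ to another copy of $M$. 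The missing idea is to change the group: replace $\pi_1(X)$ by $\G=\pi_1(X\cup C(\text{far end}))$, the quotient by the normal closure of the far end's fundamental group. This kills the far end by Seifert--van Kampen. You must then prove that $\pi_1(M)\to\G$ is still injective. That injectivity is the real content of the theorem (Lemma~\ref{lem:peripheral-pi1}), and nothing in your proposal establishes it.

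The paper sets up the geometry so that this lemma holds. Note first that your Step~2 cannot ``leave $K$ fixed''. The faces of toroidal simplices are toroidal, so the outer boundary of $\mathcal X(CK)$ is $\mathcal X(K)$. An unmodified copy of $M$ appears only as the link of the deleted cone vertex, so you have the two ends the wrong way round. The paper therefore takes $P=M\times[0,1]$ with the staircase triangulation and cones off \emph{both} ends. It then asphericalizes and deletes small cones around $\bar c_\pm$, so both boundary components of $W=\mathcal X(P,M_-\sqcup M_+)$ are copies of $M$ arising as links. Coning off $M_-$ then simply restores the aspherical block $\mathcal X(c_-*M_-)$. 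Thus $W\cup CM_-\cong\mathcal X(K_I)\cup_{\mathcal X(M_+)}T_+$ is a tree of spaces with $\pi_1$-injective edge inclusions. Together with the retraction $p_+\circ F\colon T_+\to M_+$ (radial projection composed with the structure map), this gives $\pi_1(M_+)\hookrightarrow\G$. Only then does one amalgamate $\Gamma=G*_{\pi_1(M)}\G$.

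In your single-cone setup the analogous cone-off kills $\pi_1(\mathcal X(K))$ on one side only, so this argument breaks down. In fact, the truncated blocks $X(c*\sigma)$ give a chain homotopy between the link inclusion of $K$ and the section $\sigma\mapsto\langle X(\sigma)\rangle$ into $\mathcal X(K)$. So after coning off $\mathcal X(K)$ the image of $H_1(M)$ vanishes, and injectivity would need a genuinely new argument.

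The staircase cylinder is also the source of the factor $n+3$: $n+1$ staircase simplices per top simplex of $M$, plus one for each cone. Each is replaced by a toroidal $(n+1)$-simplex with at most $4^n(n+1)!(n+2)!$ top simplices. Your count instead attributes $n+3$ to an unspecified ``final cylinder or collar'' and overestimates the toroidal simplex by a factor of $4$.
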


In particular, every group embeds into an asphericalization group of the type appearing in Theorem~\ref{thm:quantitative-bordism}.
Thus the bordism version of Gromov's linearity problem has an affirmative answer for faithful representations after allowing the target group to be enlarged.
The point of the theorem is not merely the existence of such an enlargement: the asphericalization procedure gives a direct quantitative construction whose dimension dependence is of factorial type.

\subsubsection*{Comparison of the quantitative constants}

The improvement in the multiplicative constant is one of the main quantitative features of Theorem~\ref{thm:quantitative-bordism}.
To compare the available constructions on the same scale, we measure complexity throughout by the number of top-dimensional simplices and consider the explicit constants furnished by the respective constructions.
The detailed estimates are given in Appendix~\ref{app:quantitative-comparison}; here we record only their dimension dependence.

For the acyclic-group construction of Cha and the author~\cite{Cha-Lim:2024-1}, which applies to arbitrary representations, tracing the skeleton-by-skeleton bordism construction together with the improved quantitative Baumslag--Dyer--Heller chain homotopy of~\cite{Lim:2022-1} gives a superexponential bound
\[
\log C_{\mathrm{acy}}(n)
=
O\!\bigl(n^3\log(n+2)\bigr).
\]
For faithful representations, the relative-hyperbolization construction of the author and Weinberger~\cite{Lim-Weinberger:2023-1} gives 
\[
\log C_{\mathrm{hyp}}(n) = n^2\log n+O(n^2).
\]
By contrast, the direct asphericalization constructed in the present paper gives 
\[
\log C_{\mathrm{asph}}(n) = 2n\log n+O(n).
\]

Thus, in the common setting of faithful representations, the dimension dependence drops and
\[
\frac{\log C_{\mathrm{asph}}(n)}
{\log C_{\mathrm{hyp}}(n)}
\longrightarrow
0.
\]
The direct asphericalization replaces the superfactorial cost of relative hyperbolization by factorial-type growth.
The acyclic-group construction has the greater generality of allowing arbitrary representations, whereas the two geometric constructions available under faithfulness yield the substantially smaller explicit construction bounds described above.

\subsubsection*{Method of the proof}
For a relative asphericalization
\[
\mathcal X(K,J),
\qquad
J=J_1\sqcup\cdots\sqcup J_r\subseteq\partial K,
\]
the asphericalized-simplex condition implies a strong peripheral $\pi_1$-injectivity property.
Each boundary component $J_i$ remains $\pi_1$-injective after any collection of the other boundary components is coned off.
This is proved from the subcomplex $\pi_1$-injectivity built into the simplex system, together with the local cone structure and a tree-of-spaces argument.

We apply this construction to $\bigl(M\times I,M_-\sqcup M_+\bigr)$.
After coning off $M_-$, the inclusion of $M_+$ still induces an injection on fundamental groups.
Amalgamating the resulting group with the original target group $G$ along the faithful image of $\pi_1(M)$ produces the asphericalization group $\Gamma$ over which the required bordism is defined.

The quantitative improvement comes from the direct nature of the simplex-replacement construction.
Relative hyperbolization requires barycentric subdivision followed by replacement by hyperbolized simplices whose top-dimensional complexity is superfactorial in the dimension.
The present construction instead replaces each simplex directly by a toroidal simplex of factorial-type complexity.
Quantifying the triangulation of the relative construction, together with the explicit complexity estimate for toroidal simplices, gives $\Delta(W) \leq C(n) \cdot \Delta(M)$, where $C(n)=O\!\left(n\, \cdot 4^n\, \cdot (n+1)!\, \cdot (n+2)! \right)$.
Thus the construction retains the asphericity and $\pi_1$-injectivity needed for the bordism argument while dispensing with the nonpositive-curvature machinery, and with a substantial part of the combinatorial cost, of hyperbolization.

A principal application concerns the Cheeger--Gromov $L^2$ $\rho$-invariant.
For a closed oriented $(4k-1)$-manifold $M$, Cheeger and Gromov~\cite{Cheeger-Gromov:1985-1,Cheeger-Gromov:1985-2} introduced an invariant associated with a representation $\pi_1(M)\longrightarrow G$.
Analytically, it is defined as a difference of $\eta$-invariants.
The topological interpretation of Chang and Weinberger~\cite{Chang-Weinberger:2003-1} expresses it as an $L^2$-signature defect of a suitable bounding manifold.
Throughout this paper, we adopt their topological definition for manifolds endowed with a faithful representation $\varphi\colon \pi_{1}(M) \to G$.

\begin{definition}\label{def:CWrho}
For a closed oriented $(4k-1)$-manifold $M$ endowed with a faithful representation $\varphi\colon\pi_1(M)\longrightarrow G$, the Cheeger--Gromov $L^2$ $\rho$-invariant is defined as the $L^2$-signature defect
\[
\rho^{(2)}(M):=\frac{1}{r}\bigl(\sign_{\Gamma}^{(2)}(W)-\sign(W)\bigr)\in\R,
\]
where $W$ is a compact oriented $4k$-manifold with $\partial W=rM$ such that each boundary inclusion is $\pi_1$-injective, and where there are a group $\Gamma$, a monomorphism $i_G\colon G\longrightarrow\Gamma$, and a representation $\psi\colon\pi_1(W)\longrightarrow\Gamma$ extending $i_G\circ\varphi$ on the boundary.
Here $\sign_{\Gamma}^{(2)}(W)$ denotes the $L^2$-signature associated with the representation $\psi$.
\end{definition}

The existence of such a manifold $W$ follows from Thom's classical work on cobordism and Hausmann~\cite{Hausmann:1981}.  
Such a group $\Gamma$, together with $i_G$ and $\psi$, exists by the standard amalgamation construction.
The well-definedness of the $\rho$-invariant is a consequence of the $\Gamma$-induction property for $L^2$-signatures~\cite{Cheeger-Gromov:1985-1} together with the standard Novikov additivity argument~\cite{Chang-Weinberger:2003-1}.
For the general definition associated with an arbitrary representation, we refer to~\cite{Cha:2014-1}.

Cheeger and Gromov proved a linear volume estimate under bounded local geometry: if $M$ is a closed Riemannian $d$-manifold with $\operatorname{inj}(M)\geq\iota$ and $|\sec|\leq K_0$, then
\[
\bigl|\rho^{(2)}(M)\bigr|
\leq
C(d,\iota,K_0) \cdot \vol(M).
\]
Although the invariant itself is topological, the original proof is analytic and depends on curvature and injectivity-radius bounds.
This led to the problem of obtaining such estimates by purely topological means.

Cha~\cite{Cha:2014-1} initiated a quantitative topological approach using the $L^2$-signature defect and obtained the first explicit linear simplicial bounds.
Further improvements were obtained in~\cite{Lim:2022-1,Cha-Lim:2024-1,Lim-Weinberger:2023-1}.
Combining Theorem~\ref{thm:quantitative-bordism} with this signature-defect method gives the following bound for the invariant associated with the universal cover because a faithful representation has trivial kernel.

\begin{theoremA}\label{thm:efficient-rho}
Let $n=4k-1$ with $k\geq 1$, and let $M$ be a closed oriented PL $n$-manifold endowed with a faithful representation $\varphi\colon \pi_1(M) \to G$.
Then
\[
\left|
\rho^{(2)}(M)
\right|
\leq
C(n) \cdot \Delta(M),
\]
where $C(n) = O\!\Bigl(\tfrac{1}{\sqrt{n+1}}\;\cdot n \cdot 8^{\,n}\,\cdot (n+1)!\,\cdot(n+2)!\Bigr)$.
\end{theoremA}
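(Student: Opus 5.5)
Theorem~\ref{thm:efficient-rho} will follow from Theorem~\ref{thm:quantitative-bordism} by the $L^2$-signature defect method, organized so the constant stays linear in $\Delta(M)$. Apply Theorem~\ref{thm:quantitative-bordism} to $M$ with $\varphi$; this yields an asphericalization group $\Gamma\supseteq G$ and an oriented PL bordism $W_0$ over $\Gamma$ from $M$ to a trivial end $N$, with $\Delta(W_0)\leq C_0(n)\Delta(M)$, where $C_0(n)=(n+3)4^n(n+1)!(n+2)!$. The structure map on $N$ is null-homotopic, so $N$ with the trivial representation has $\rho^{(2)}(N)=0$. Since $\rho^{(2)}$ is a bordism-type invariant, the signature defect of $W_0$ then computes $\rho^{(2)}(M)$:
\[
\rho^{(2)}(M)=\pm\bigl(\sign^{(2)}_\Gamma(W_0)-\sign(W_0)\bigr).
\]
To justify this, I will cap off $N$ with some $V$ bounding $rN$ over the trivial group, using Definition~\ref{def:CWrho} and Novikov additivity, and the fact that for the trivial representation the $L^2$-signature reduces to the ordinary one via $\Gamma$-induction. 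The faithfulness of $\varphi$ enters exactly here: $\varphi$ and the inclusion $G\to\Gamma$ are injective, so the representation on $M$ extends monomorphically, as Definition~\ref{def:CWrho} requires. One subtlety is that $\partial W_0\to W_0$ need not be $\pi_1$-injective in the sense of the definition. I plan to use the general Cha-style definition~\cite{Cha:2014-1}, which agrees with Definition~\ref{def:CWrho} for faithful representations, so that only a representation of $\pi_1(W_0)$ extending $i_G\circ\varphi$ is needed.

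Next I bound both signatures by the middle-dimensional chain complexity. For the ordinary signature, $|\sign(W_0)|\leq\dim H_{2k}(W_0;\mathbb{Q})\leq \#\{2k\text{-simplices of }W_0\}$. For the $L^2$-signature, $|\sign^{(2)}_\Gamma(W_0)|$ is bounded by the von Neumann dimension of the $L^2$-homology in degree $2k$, hence by the number of $2k$-simplices. The number of $2k$-simplices in a triangulated $4k$-manifold is at most $\binom{4k+1}{2k+1}$ times the number of top simplices, since every simplex is a face of a top simplex. This gives
\[
|\rho^{(2)}(M)|\leq 2\binom{n+2}{(n+3)/2}C_0(n)\,\Delta(M).
\]

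Finally I estimate the constants. With $n=4k-1$ we have $\dim W_0=n+1=4k$, and Stirling gives
\[
\binom{n+1}{(n+1)/2}\sim \frac{2^{n+1}}{\sqrt{\pi(n+1)/2}}.
\]
Multiplying by $C_0(n)=(n+3)4^n(n+1)!(n+2)!$ produces $O\bigl(\tfrac{1}{\sqrt{n+1}}\,n\,8^n(n+1)!(n+2)!\bigr)$, as claimed. The main obstacle is the first step: verifying that the trivial end contributes nothing and that the bordism over $\Gamma$ computes $\rho^{(2)}(M)$ under the adopted definition, including orientation and boundary-injectivity issues. I would handle this by citing the established invariance of $\rho^{(2)}$ under bordism over $\Gamma$ from~\cite{Cha:2014-1,Lim-Weinberger:2023-1}, rather than reproving it.
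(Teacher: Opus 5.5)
Your proposal is correct and follows the same route as the paper. The steps are: apply Theorem~\ref{thm:quantitative-bordism}; use $L^2$-induction and the vanishing of $\rho^{(2)}$ on the trivial end to obtain $\rho^{(2)}(M)=\sign^{(2)}_\Gamma(W)-\sign(W)$; bound both signatures by $f_{2k}\le\binom{4k+1}{2k+1}f_{4k}$; and finish with the central binomial estimate. Your capping-off argument via Cha's general definition simply spells out the bordism formula that the paper cites.

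The only slip is cosmetic. You apply Stirling to $\binom{n+1}{(n+1)/2}$ rather than to $\binom{n+2}{(n+3)/2}=\binom{4k+1}{2k+1}$, but these differ by a factor of at most $2$, so the final $O$-bound is unchanged.
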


The essential quantitative gain can be seen by comparison with the hyperbolization bound of~\cite{Lim-Weinberger:2023-1}.
There the corresponding constant has order
\[
O\!\left(
\frac{1}{\sqrt{n+1}}\,
\cdot
n^2
\cdot
8^n
\cdot
\Bigl(\prod_{k=1}^{n} k!\Bigr)
\cdot
\Bigl(\prod_{k=1}^{n+1} k!\Bigr)
\right).
\]
Hence the two superfactorial products occurring in the previous estimate are replaced by the ordinary factorial factors.
This is the quantitative reflection of the main structural difference between the two constructions: the present asphericalization avoids the subdivision cost inherent in the hyperbolization procedure.

\subsection*{Organization of the paper}

Section~\ref{sec:asph} develops coherent PL simplex systems, direct simplex replacement, and the structural asphericity and $\pi_1$-injectivity results needed later.
Section~\ref{sec:toroidal-simplex} constructs the toroidal asphericalized simplex system by reflection cylinders.
Section~\ref{sec:rel-asph} develops the quantitative relative asphericalization, proves peripheral $\pi_1$-injectivity, and proves Theorem~\ref{thm:quantitative-bordism}.
Section~\ref{sec:linear-rho} applies the bordism theorem to the Cheeger--Gromov $L^2$ $\rho$-invariant and proves Theorem~\ref{thm:efficient-rho}.
Finally, Section~\ref{sec:smooth} discusses the smooth analogue and the quantitative dependence of the corresponding constants, while Appendix~\ref{app:quantitative-comparison} gives a detailed comparison of the multiplicative constants arising from the acyclic-group, relative-hyperbolization, and direct-asphericalization constructions.

\subsection*{Acknowledgements}

The author is grateful to Shmuel Weinberger, Jae Choon Cha, and Fedor Manin for many stimulating conversations related to this work.
This work was supported by Kangnam University Research Grants (2026).
The author used ChatGPT (GPT-5) to assist with typesetting, language editing of portions of the text, figure creation, and adversarial reading to identify potential gaps or inconsistencies in the exposition and arguments.
The author takes full intellectual responsibility for all content of this paper, including its mathematical results, proofs, figures, and references.

%%%%%%%%%%%%%%%%%%%%%%%%%%%%%%
%%%%%%%Section 2%%%%%%%%%%%%%%
%%%%%%%%%%%%%%%%%%%%%%%%%%%%%%

\section{Asphericalized simplices and asphericalization}
\label{sec:asph}

In this section, we introduce an asphericalization procedure.
The construction takes as input a coherent family of PL manifolds with faces that serve as replacements for the standard simplices.
Each such manifold has the same labeled face poset and local normal-incidence structure as the corresponding standard simplex, while its interior is allowed to have nontrivial topology.

Throughout this section, all simplicial complexes are finite, and all manifolds and maps are PL.
For notational convenience, we shall not distinguish between a simplicial complex and its geometric realization when no confusion is likely to arise.

\subsection{Coherent PL simplex systems}

We first fix notation and recall the definition of a PL manifold.

\begin{definition}
\label{def:PLmfd}
A finite $m$-dimensional simplicial complex $K$ is called a \emph{PL $m$-manifold with possibly empty boundary} if, for every $\ell$-simplex $a\in K$,
\[
\Lk_K(a)\cong_{\PL}
\begin{cases}
S^{m-\ell-1}, & |a|^{\circ}\subset \operatorname{int} K,\\[2mm]
B^{m-\ell-1}, & |a|^{\circ}\subset \partial K.
\end{cases}
\]
We use the standard conventions in dimension $-1$.
\end{definition}

For $k\geq 0$, let $[k]:=\{0,1,\ldots,k\}$, equipped with its natural ordering, and let $\Delta^k$ denote the standard $k$-simplex with ordered vertex set $[k]$.
For a nonempty subset $S=\{s_0<\cdots<s_j\}\subseteq[k]$, let $\iota_S\colon[j]\hookrightarrow[k]$ be the order-preserving injection defined by $\iota_S(r)=s_r$.
We denote the corresponding face of $\Delta^k$ by $\Delta^k[S]:=\operatorname{conv}\{e_s\mid s\in S\}=\iota_S(\Delta^j)$.

Let $\emptyset\neq T\subseteq S\subseteq[k]$, where $S=\{s_0<\cdots<s_j\}$.
Define $\operatorname{pos}_S(T):=\{\,r\in[j]\mid s_r\in T\,\}\subseteq[j]$.
With this notation, $\iota_T=\iota_S\circ\iota_{\operatorname{pos}_S(T)}$.

For a finite set $A$, let $[0,\varepsilon)^A$ denote the orthant whose coordinates are indexed by $A$.
Fix a compatible family of standard PL face collars $\kappa_S^k\colon\Delta^{|S|-1}\times[0,\varepsilon)^{[k]\setminus S}\longrightarrow\Delta^k$ such that $\kappa_S^k(y,0)=\iota_S(y)$.
We assume that these collars are compatible under nested face inclusions.

A \emph{PL $m$-manifold with faces} is a compact PL $m$-manifold $X$ with possibly empty boundary, together with a finite collection of closed codimension-one PL submanifolds $\mathcal F(X)=\{F_1,\ldots,F_N\}$, called its \emph{facets}, satisfying the following conditions.
First, $\partial X=\bigcup_{\alpha=1}^N F_\alpha$.
Second, if $x\in X$ lies in precisely the distinct facets $F_{\alpha_1},\ldots,F_{\alpha_r}$, then there are a neighborhood $U$ of $x$, a number $\varepsilon>0$, and a PL homeomorphism $\phi\colon U \xrightarrow{\cong_{\PL}} (-\varepsilon,\varepsilon)^{m-r}\times[0,\varepsilon)^r$ sending $x$ to the origin and satisfying $\phi(U\cap F_{\alpha_i}) = \phi(U)\cap\{t_i=0\}, 1\leq i\leq r$, where $t_1,\ldots,t_r$ are the coordinates of the orthant factor.
A \emph{face} of $X$ is a connected component of a nonempty intersection of facets, with $X$ itself regarded as the unique codimension-zero face.
Thus every codimension-$r$ face is locally given by the simultaneous vanishing of $r$ distinct normal coordinates.
All faces are understood to be closed.

\begin{definition}[Coherent PL simplex system]
\label{def:coherent-simplex-system}
A \emph{coherent PL simplex system of length $n$} consists of the following data for each $0\leq k\leq n$.

\begin{enumerate}
\item[\textnormal{(S1)}]
A compact connected PL $k$-manifold $X_k$ with faces.

\item[\textnormal{(S2)}]
For every nonempty subset $S\subseteq[k]$, a PL embedding $j_S^k\colon X_{|S|-1}\hookrightarrow X_k$.
The \emph{$S$-face} of $X_k$ is defined by $X_k[S]:=j_S^k\bigl(X_{|S|-1}\bigr)\subseteq X_k$.
We also set $X_k[\emptyset]:=\emptyset$.

\item[\textnormal{(S3)}]
The face embeddings are strictly coherent.
More precisely, $j_{[k]}^k=\operatorname{id}_{X_k}$, and for every $\emptyset\neq T\subseteq S\subseteq[k]$, one has $j_T^k=j_S^k\circ j_{\operatorname{pos}_S(T)}^{\,|S|-1}$.
Equivalently, the following diagram commutes:
\[
\begin{tikzcd}
X_{|T|-1}
\arrow[r, "{j_{\operatorname{pos}_S(T)}^{\,|S|-1}}"]
\arrow[d, "{j_T^k}"']
&
X_{|S|-1}
\arrow[d, "{j_S^k}"]
\\
X_k
\arrow[r, equal]
&
X_k.
\end{tikzcd}
\]

\item[\textnormal{(S4)}]
The distinguished faces have the same incidence pattern as the faces of $\Delta^k$.
For all $S,T\subseteq[k]$, one has $X_k[S]\cap X_k[T]=X_k[S\cap T]$.
The facets of $X_k$ are precisely the distinguished codimension-one faces $X_k\bigl[[k]\setminus\{i\}\bigr], 0\leq i\leq k$ and $\partial X_k=\bigcup_{i=0}^k X_k\bigl[[k]\setminus\{i\}\bigr]$.

\item[\textnormal{(S5)}]
The distinguished faces admit compatible product collars.
For every nonempty subset $S\subseteq[k]$, there is a PL embedding $c_S^k\colon X_{|S|-1}\times[0,\varepsilon)^{[k]\setminus S}\longrightarrow X_k$ onto a neighborhood of $X_k[S]$ such that $c_S^k(x,0)=j_S^k(x)$.

These collars are compatible under nested face inclusions.
More precisely, let $\emptyset\neq T\subseteq S\subseteq[k]$ and use the canonical decomposition $[k]\setminus T=(S\setminus T)\sqcup([k]\setminus S)$.
After identifying the coordinates indexed by $S\setminus T$ with those indexed by $[|S|-1]\setminus\operatorname{pos}_S(T)$, we require
\[
c_T^k(x,(u,v))
=
c_S^k\left(
c_{\operatorname{pos}_S(T)}^{\,|S|-1}(x,u),v
\right)
\]
whenever both sides are defined.

\item[\textnormal{(S6)}]
There is a PL \emph{structure map} $f_k\colon X_k\longrightarrow\Delta^k$ satisfying the following conditions.

\begin{enumerate}
\item[\textnormal{(a)}]
For every nonempty subset $S\subseteq[k]$, one has $f_k\circ j_S^k=\iota_S\circ f_{|S|-1}$.

\item[\textnormal{(b)}]
The distinguished faces are the exact inverse images of the corresponding faces of $\Delta^k$, that is, $f_k^{-1}\bigl(\Delta^k[S]\bigr)=X_k[S]$.
Equivalently, $f_k^{-1}\bigl((\Delta^k[S])^{\circ}\bigr)=X_k[S]^{\circ}$, where $X_k[S]^{\circ}:=X_k[S]\setminus\bigcup_{T\subsetneq S}X_k[T]$.

\item[\textnormal{(c)}]
The structure map has product form in the chosen collars.
More precisely, $f_k\bigl(c_S^k(x,u)\bigr)=\kappa_S^k\bigl(f_{|S|-1}(x),u\bigr)$ for every $x\in X_{|S|-1}$ and $u\in[0,\varepsilon)^{[k]\setminus S}$.
\end{enumerate}
\end{enumerate}
\end{definition}

\begin{remark}
For notational convenience, from now on we work with simplex systems defined in all dimensions, unless a finite length is explicitly specified.
Thus $\mathcal X=\{(X_k,f_k)\}_{k\geq0}$ denotes an infinite coherent PL simplex system, and $\mathcal X_{\leq n}:=\{(X_k,f_k)\}_{k=0}^n$ denotes its truncation to length $n$.
All constructions involving a finite-dimensional simplicial complex use only the corresponding finite truncation of $\mathcal X$.
\end{remark}

\begin{remark}
\label{rem:simplex-face-meaning}
The term ``simplex system'' reflects the fact that the collection $\{X_k[S]\mid\emptyset\neq S\subseteq[k]\}$ has the same labeled face poset as $\Delta^k$.
In particular, every $(j+1)$-element subset $S\subseteq[k]$ determines a distinguished face $X_k[S]\cong_{\PL}X_j$.
Condition~\textnormal{(S3)} ensures that a lower-dimensional face included directly into $X_k$ agrees with the same face obtained through any intermediate face.
Condition~\textnormal{(S4)} excludes unintended intersections between distinguished faces and identifies the distinguished codimension-one faces with the facets of the manifold-with-faces structure.
The manifold-with-faces condition gives the local orthant model for their transverse incidence, while condition~\textnormal{(S5)} strengthens this local structure by choosing product collars that are strictly compatible under all nested face inclusions.
\end{remark}

\begin{definition}[Oriented degree-one simplex system]
\label{def:oriented-degree-one-system}
A coherent PL simplex system is called \emph{oriented} if each $X_k$ is oriented and the orientations are compatible with the standard simplex convention.
More precisely, for every $k\geq1$ and $0\leq i\leq k$, the boundary orientation on the facet $X_k\bigl[[k]\setminus\{i\}\bigr]$ corresponds under $j_{[k]\setminus\{i\}}^k\colon X_{k-1}\longrightarrow X_k\bigl[[k]\setminus\{i\}\bigr]$ to $(-1)^i$ times the fixed orientation of $X_{k-1}$.

An oriented coherent PL simplex system of length $n$ is called \emph{degree one} if, for every $0\leq k\leq n$, the map
$f_k\colon(X_k,\partial X_k)\longrightarrow(\Delta^k,\partial\Delta^k)$
has degree $+1$.
For an infinite coherent simplex system, degree one means that every finite truncation is degree one.
\end{definition}

By condition~\textnormal{(S6)(a)}, the restriction of $f_k$ to the face $X_k[S]$, under the identification $X_k[S]\cong X_{|S|-1}$, agrees with $f_{|S|-1}$.
It follows that the structure maps have degree one on every distinguished face.

Let $P\subseteq\Delta^k$ be a simplicial subcomplex.
Define the corresponding union of distinguished faces by $X_k[P]:=\bigcup_{\Delta^k[S]\subseteq P}X_k[S]$.
For the empty subcomplex, set $X_k[\emptyset]:=\emptyset$.
Condition~\textnormal{(S6)(b)} gives $X_k[P]=f_k^{-1}(|P|)$.

\begin{definition}[Asphericalized simplex system]
\label{def:asphericalized-simplex-system}
An oriented degree-one coherent PL simplex system $\mathcal X_{\leq n}=\{(X_k,f_k)\}_{k=0}^n$ is called an \emph{asphericalized simplex system} if it satisfies the following condition.

\begin{enumerate}
\item[\textnormal{(A)}]
For every $0\leq k\leq n$ and every simplicial subcomplex $P\subseteq\Delta^k$, each path component of $X_k[P]$ is aspherical, and its inclusion into $X_k$ induces a monomorphism on fundamental groups.
\end{enumerate}

The space $X_k$, together with its distinguished face structure and structure map $f_k\colon X_k\to\Delta^k$, is called an \emph{asphericalized $k$-simplex}.
\end{definition}

\begin{remark}
\label{rem:low-dimensional-examples}
The standard simplices $\Delta^0$ and $\Delta^1$, equipped with their usual face structures and identity maps, are asphericalized simplices.
The standard simplex $\Delta^2$ is not an asphericalized $2$-simplex.
In fact, for $P=\partial\Delta^2$, the homomorphism $\pi_1(\partial\Delta^2)\longrightarrow\pi_1(\Delta^2)$ induced by inclusion is not injective.
On the other hand, a punctured torus whose boundary is decomposed into three coherently labeled $1$-faces gives a basic example of an asphericalized $2$-simplex.
However, this example does not extend naively to higher dimensions.
For $n\geq3$, a punctured $n$-torus $T^n\setminus\operatorname{int}D^n$ is not an asphericalized $n$-simplex because, taking $P=\partial\Delta^n$, we obtain $X_n[P]=\partial X_n\cong S^{n-1}$, which is not aspherical.
In Section~\ref{sec:toroidal-simplex}, we construct higher-dimensional asphericalized simplices.
\end{remark}

\subsection{Direct simplex replacement}

Let $K$ be a finite simplicial complex equipped with a linear ordering of its vertex set.
For a simplex $\sigma=[v_0<\cdots<v_k]\in K$, let $a_\sigma\colon\Delta^k\xrightarrow{\cong} \sigma $ be the order-preserving affine identification.

For a face $\tau\subseteq\sigma$, define $S_\sigma(\tau):=\{\,i\in[k]\mid v_i\in\tau\,\}$.
The ordering of $\tau$ induced from $K$ agrees with the ordering of the subset $S_\sigma(\tau)$.

\begin{definition}[Direct simplex replacement and asphericalization]
\label{def:replacement}
Let $\mathcal X_{\leq n}=\{(X_k,f_k)\}_{k=0}^n$ be a coherent PL simplex system, and let $K$ be a finite ordered simplicial complex of dimension at most $n$.

For each $k$-simplex $\sigma\in K$, take a labeled copy $X(\sigma)$ of $X_k$.
For every face inclusion $\tau\subseteq\sigma$, where $\dim\tau=\ell$, identify $X(\tau)$ with the distinguished face $X_k[S_\sigma(\tau)]\subseteq X(\sigma)$ using the specified embedding $j_{S_\sigma(\tau)}^k\colon X_\ell\hookrightarrow X_k$.

The \emph{direct simplex replacement} of $K$ with respect to $\mathcal X_{\leq n}$ is the colimit
\[
\mathcal X(K)
:=
\operatorname*{colim}_{\sigma\in\operatorname{Face}(K)}
X(\sigma).
\]
Equivalently,
\[
\mathcal X(K)
=
\left(
\bigsqcup_{\sigma\in K}X(\sigma)
\right)\Big/\sim,
\]
where the equivalence relation is generated by $x\sim j_{S_\sigma(\tau)}^{\dim\sigma}(x)$ for every face inclusion $\tau\subseteq\sigma$ and every $x\in X(\tau)$.

On each replacement simplex $X(\sigma)\cong X_k$, define $F_K|_{X(\sigma)}:=a_\sigma\circ f_k$.
Condition~\textnormal{(S6)(a)} gives $f_k\circ j_{S_\sigma(\tau)}^k=\iota_{S_\sigma(\tau)}\circ f_\ell$, so these maps agree on every common face.
They therefore induce a well-defined PL map $F_K\colon\mathcal X(K)\longrightarrow K$, which we call the \emph{structure map}.

If $\mathcal X_{\leq n}$ is asphericalized, we refer to $\mathcal X(K)$ as the \emph{asphericalization of $K$ with respect to $\mathcal X_{\leq n}$}.
\end{definition}

\begin{remark}
\label{rem:replacement-gluing}
The construction does not require an auxiliary choice of a homeomorphism between two faces.
If $\sigma\cap\sigma'=\tau$, then the replacement simplices $X(\sigma)$ and $X(\sigma')$ are identified through the common model $X(\tau)$.
Thus, their union is the pushout $X(\sigma)\cup_{X(\tau)}X(\sigma')$.
Strict coherence guarantees that the attaching maps agree on all iterated faces, while the face-intersection condition prevents unintended identifications.
Consequently, $X(\sigma)\cap X(\sigma')=X(\sigma\cap\sigma')$ inside $\mathcal X(K)$.
\end{remark}

\begin{remark}
\label{rem:structure-map-well-defined}
The block maps $F_\sigma$ are compatible with all face identifications.
In fact, let $\tau\subseteq\sigma$, where $\dim\sigma=k$ and $\dim\tau=\ell$, and put $S:=S_\sigma(\tau)$.
On the copy $X(\tau)\cong X_\ell$, condition~\textnormal{(S6)(a)} gives
\[
F_\sigma\circ j_S^k
=
a_\sigma\circ f_k\circ j_S^k
=
a_\sigma\circ\iota_S\circ f_\ell
=
a_\tau\circ f_\ell
=
F_\tau.
\]
Here $a_\sigma\circ\iota_S=a_\tau$ because the ordering of the vertices of $\tau$ induced from $\sigma$ agrees with the global ordering of $V(K)$.
Therefore, the maps $F_\sigma$ descend uniquely to a continuous map $F_K\colon\mathcal X(K)\to K$ by the universal property of the colimit.
Since $K$ is finite and all block maps and attaching maps are PL, the induced map $F_K$ is PL.
\end{remark}

For a subcomplex $L\subseteq K$, denote the corresponding subcolimit by $\mathcal X(L)\subseteq\mathcal X(K)$.
The exact face-preimage condition implies $F_K^{-1}(L)=\mathcal X(L)$.
In particular, $F_K^{-1}(\partial K)=\mathcal X(\partial K)$.

\begin{remark}\label{rem:compare-williams}
The construction above is a subdivision-free analogue of the Williams fiber-product construction used in the Davis--Januszkiewicz hyperbolization.
In the latter construction, the barycentric subdivision of an $n$-dimensional simplicial complex $K$ carries the canonical dimension map $d\colon\operatorname{sd}K\to\Delta^n$, and a space over $\Delta^n$ is pulled back along this map; see~\cite[Section~1]{Davis-Januszkiewicz:1991-1} and~\cite{Williams:1963}.
Thus, barycentric subdivision supplies the global map to $\Delta^n$ that makes the Williams construction functorial.

In the present construction, the global map $\operatorname{sd}K\to\Delta^n$ is replaced by the coherent ordered face embeddings $j_S^k$ and the compatible structure maps $f_k\colon X_k\to\Delta^k$.
Consequently, each simplex $\sigma$ of $K$ is replaced directly by the block $X_{\dim\sigma}$, and the replacement blocks are glued through their prescribed lower-dimensional faces without first subdividing $K$.
The quantitative advantage is that the combinatorial growth caused by barycentric subdivision is avoided; the resulting complexity estimates will be established below.

The construction is natural with respect to order-preserving nondegenerate simplicial maps.
In fact, if $g\colon K\to K'$ is such a map and $\sigma=[v_0<\cdots<v_k]$ is a simplex of $K$, then $g(\sigma)=[g(v_0)<\cdots<g(v_k)]$ is a $k$-simplex of $K'$, and the canonical identification of the two labeled copies of $X_k$ defines a block map $X(\sigma)\to X(g(\sigma))$.
Since $S_\sigma(\tau)=S_{g(\sigma)}(g(\tau))$ for every face $\tau\subseteq\sigma$, these block maps respect all face identifications and induce a PL map $\mathcal X(g)\colon\mathcal X(K)\to\mathcal X(K')$ satisfying $F_{K'}\circ\mathcal X(g)=|g|\circ F_K$.
A simplicial map that reverses the chosen vertex order or collapses a simplex does not, in general, induce a canonical map between the replacement spaces unless the simplex system is equipped with additional equivariance or degeneracy data.

Finally, since the construction does not pass to $\operatorname{sd}K$, it does not automatically inherit the flag structure supplied by barycentric subdivision and used in the usual CAT(0) link criterion.
Accordingly, no nonpositive-curvature conclusion is asserted here.
The construction retains instead the coherent face and local normal-incidence data needed for asphericity, preservation of PL manifold structures, and quantitative control.
\end{remark}

We first record two elementary properties of the replacement construction.
For the remainder of this paper, we assume that $K$ is a connected finite ordered simplicial complex of dimension $n$ and that $\mathcal X$ is an oriented degree-one coherent PL simplex system.

\begin{lemma}[Connectivity and surjectivity on fundamental groups]
\label{lem:pi1-surj}
The direct simplex replacement $\mathcal X(K)$ is path-connected.
Moreover, for every vertex $v\in K$ and the corresponding point $\bar v\in\mathcal X(K)$, the structure map induces a surjection
\[
(F_K)_*\colon\pi_1(\mathcal X(K),\bar v)\twoheadrightarrow\pi_1( K,v).
\]
Consequently, $(F_K)_*\colon H_1(\mathcal X(K);\mathbb Z)\twoheadrightarrow H_1( K;\mathbb Z)$ is surjective.
\end{lemma}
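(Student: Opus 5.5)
The plan is to construct a combinatorial section of $F_K$ over the $1$-skeleton of $K$. Each $X(\sigma)$ is path-connected by (S1), so connectivity will follow from the connectivity of $K$. Surjectivity on $\pi_1$ will then follow because every loop in $K$ is homotopic to an edge path, and edge paths lift through $F_K$.

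\emph{Step 1 (vertices and edges).} Since $X_0$ is a compact connected $0$-manifold, it is a single point and $f_0$ is the identity of $\Delta^0$. For each vertex $v$ this gives a point $\bar v=X(v)\in\mathcal X(K)$ with $F_K(\bar v)=v$. For an edge $e=[v<w]$, the block $X(e)\cong X_1$ is a compact connected $1$-manifold with exactly two facets, namely the two points $X_1[\{0\}]$ and $X_1[\{1\}]$ (see (S4)). Hence $X_1$ is an arc, and $f_1\colon(X_1,\partial X_1)\to(\Delta^1,\partial\Delta^1)$ has degree one. Choose a PL homeomorphism $s_1\colon\Delta^1\to X_1$ with $s_1(e_i)=j^1_{\{i\}}(\mathrm{pt})$. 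Then $f_1\circ s_1$ is a self-map of $\Delta^1$ fixing the endpoints, so it is homotopic rel endpoints to the identity. (Alternatively, by (S6)(b), $f_1$ maps the interior into the interior, so one can simply take $s_1$ to be the arc parametrized so that $f_1\circ s_1\simeq\mathrm{id}$ rel $\partial$.) Transporting $s_1$ to each edge gives an arc $\gamma_e$ in $X(e)$ from $\bar v$ to $\bar w$ with $F_K\circ\gamma_e\simeq e$ rel endpoints.

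\emph{Step 2 (connectivity).} Any point of $\mathcal X(K)$ lies in some block $X(\sigma)$, which is path-connected and contains $\bar v$ for each vertex $v$ of $\sigma$. Since $K$ is connected, any two vertices are joined by an edge path, and this edge path lifts to a concatenation of the arcs $\gamma_e$. Hence $\mathcal X(K)$ is path-connected.

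\emph{Step 3 ($\pi_1$-surjectivity).} By simplicial approximation, every loop in $K$ at $v$ is homotopic to an edge loop $e_1\cdots e_m$. Traversing an edge backwards uses the reversed arc, so orientation causes no issue. The concatenation $\gamma_{e_1}\cdots\gamma_{e_m}$ is a loop at $\bar v$, and its image under $F_K$ is homotopic rel basepoint to $e_1\cdots e_m$, because the homotopies on each edge are rel endpoints. This proves surjectivity of $(F_K)_*$. The statement on $H_1$ follows by Hurewicz, since abelianization preserves surjections. Both spaces are path-connected, so basepoints are harmless.

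\emph{Main obstacle.} The only point needing care is Step 1: identifying $X_1$ as an arc whose endpoints are exactly the two vertex faces, and checking that the restriction of $F_K$ to $X(e)$ is, up to homotopy rel endpoints, the standard parametrization of the edge. This uses (S4) for the facets and the fact that any map $\Delta^1\to\Delta^1$ fixing both endpoints is homotopic rel endpoints to the identity, because $\Delta^1$ is convex. Everything else is routine path lifting.
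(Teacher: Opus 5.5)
Your proof is correct and is essentially the paper's argument. The paper assembles your edge arcs into a single map $s\colon K^{(1)}\to\mathcal X(K)$ with $F_K\circ s\simeq i$ rel $K^{(0)}$ and then uses surjectivity of $\pi_1(K^{(1)})\to\pi_1(K)$. That is exactly your edge-loop lifting, packaged as one global map instead of loop by loop.
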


\begin{proof}
By condition~\textnormal{(S1)}, $X_0$ is a compact connected $0$-dimensional PL manifold and hence consists of a single point.
Thus every vertex $v$ of $K$ determines a well-defined point $\bar v\in\mathcal X(K)$.

Conditions~\textnormal{(S1)} and~\textnormal{(S4)} imply that $X_1$ is a compact connected PL $1$-manifold whose boundary consists of its two distinct vertex faces.
Hence $X_1$ is PL homeomorphic to the closed interval.

For every edge $e=[u_0<u_1]$ of $K$, choose a PL homeomorphism $q_e\colon e \to X(e)$ carrying $u_i$ to $\bar u_i$ for $i=0,1$.
These maps agree on common vertices and therefore define a continuous map $s\colon K^{(1)} \to\mathcal X(K)$.

Since $K$ is connected, its $1$-skeleton $K^{(1)}$ is connected, and hence $s( K^{(1)} )$ is path-connected.
Every point $x\in\mathcal X(K)$ lies in the image of some block $X(\sigma)$.
The block $X(\sigma)\cong X_{\dim\sigma}$ is path-connected by condition~\textnormal{(S1)} and contains $\bar w$ for every vertex $w$ of $\sigma$.
Thus $x$ can be joined by a path to a point of $s( K^{(1)} )$.
It follows that $\mathcal X(K)$ is path-connected.

Let $i\colon K^{(1)} \hookrightarrow K$ denote the inclusion.
For every edge $e$ of $K$, both $(F_K\circ s)|_{ e }$ and $i|_{ e }$ take values in $ e $ and agree on the endpoints of $e$.
Since $ e $ is contractible, these two maps are homotopic relative to $\partial e $.
The resulting edgewise homotopies agree on the vertices and therefore give a homotopy $F_K\circ s\simeq i$ relative to $ K^{(0)} $.

After choosing the basepoints $v\in K^{(1)} $ and $\bar v=s(v)$, we obtain $(F_K)_*\circ s_*=i_*$ on fundamental groups.
By cellular approximation, $i_*\colon\pi_1( K^{(1)} ,v)\to\pi_1( K,v)$ is surjective.
Therefore $(F_K)_*$ is surjective.

The assertion for $H_1$ follows because first integral homology is the abelianization of the fundamental group for a path-connected space.
\end{proof}

Since the construction preserves disjoint unions, connectivity questions may be treated separately on each connected component.
Accordingly, whenever fundamental groups are considered below, the relevant ambient complex will be assumed connected.

We next establish homological surjectivity.

\begin{lemma}
\label{lem:homological-surjectivity}
For every $q\geq0$, the structure map induces a split surjection
\[
(F_K)_*\colon
H_q\bigl(\mathcal X(K);\mathbb Z\bigr)
\longrightarrow
H_q(K;\mathbb Z).
\]
\end{lemma}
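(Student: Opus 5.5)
We will prove the statement by constructing an explicit right inverse on chains, or more robustly by an umkehr (transfer) argument built from the degree-one structure maps. The cleanest route is a chain-level splitting. Filter both spaces by skeleta: let $\mathcal X(K^{(k)})$ be the subcolimit over simplices of dimension at most $k$. By the exact face-preimage property, $F_K$ restricts to maps of pairs
\[
F_K\colon \bigl(\mathcal X(K^{(k)}),\mathcal X(K^{(k-1)})\bigr)\longrightarrow \bigl(K^{(k)},K^{(k-1)}\bigr).
\]
By excision along the product collars of (S5), and by Remark~\ref{rem:replacement-gluing} (blocks meet exactly in common faces), the relative homology of the left pair is $\bigoplus_{\dim\sigma=k}H_*(X_k,\partial X_k)$. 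The right pair has relative homology $\bigoplus_{\dim\sigma=k}H_*(\Delta^k,\partial\Delta^k)$. On each summand, $f_k$ has degree $+1$.

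The key steps, in order, are as follows.

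\begin{enumerate}
\item Choose, for each $k$, a relative fundamental class $[X_k]\in H_k(X_k,\partial X_k)$. This uses the orientation convention of Definition~\ref{def:oriented-degree-one-system}, under which $\partial[X_k]=\sum_i(-1)^i (j^k_{[k]\setminus\{i\}})_*[X_{k-1}]$, matching $\partial\Delta^k$. Since the system is degree one, $(f_k)_*[X_k]=[\Delta^k]$.

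\item Define a map of cellular chain complexes
\[
s\colon C^{\mathrm{simp}}_k(K)\to C^{\mathrm{cell}}_k(\mathcal X)
\]
by sending $\sigma$ to the image of $[X(\sigma)]$. Here $C^{\mathrm{cell}}_k(\mathcal X):=H_k(\mathcal X(K^{(k)}),\mathcal X(K^{(k-1)}))$, with boundary given by the connecting maps of the skeletal filtration. Strict coherence (S3) and the orientation convention show that $s$ commutes with boundaries.

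\item Use the filtration spectral sequence, or a direct comparison, to see that this chain map induces a map $H_q(K)\to H_q(\mathcal X(K))$, with $(F_K)_*\circ s_*=\mathrm{id}$ because $F_K\circ s=\mathrm{id}$ on $C^{\mathrm{simp}}$ by step~1.
\end{enumerate}

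The main obstacle is step~3. The filtration of $\mathcal X(K)$ is not cellular: the relative groups $H_*(X_k,\partial X_k)$ are nonzero outside degree $k$. So the cellular complex in step~2 does not compute $H_*(\mathcal X(K))$.

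To get around this, I would realize the splitting geometrically rather than through a cellular complex. Choose inductively on $k$ singular chains $z_k\in C_k(X_k)$ representing $[X_k]$, with
\[
\partial z_k=\sum_i(-1)^i (j^k_{[k]\setminus\{i\}})_\# z_{k-1}
\]
exactly. This is possible by induction: the right-hand side is a cycle in $\partial X_k$ representing the image of $[X_k]$ under $\partial$, so one adds a chain in $\partial X_k$ to a relative representative to correct the boundary. Strict coherence (S3) ensures these choices are canonical on shared faces. The assignment $\sigma\mapsto (\text{copy of } z_k \text{ in } X(\sigma))$ is then a genuine chain map $\Sigma\colon C^{\mathrm{simp}}_*(K)\to C_*(\mathcal X(K))$. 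This requires the $z$'s to depend only on the dimension and the ordered embeddings, which holds by labeled copies.

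Then $(F_K)_\#\Sigma(\sigma)$ is a singular chain in $\sigma$ with the same boundary pattern and degree one. An induction on skeleta, using acyclicity of simplices relative to their boundary pieces, gives a chain homotopy between $(F_K)_\#\circ\Sigma$ and the standard inclusion of simplicial chains into singular chains of $K$. Passing to homology gives $(F_K)_*\circ\Sigma_*=\mathrm{id}$ on $H_q(K;\mathbb Z)$, so $(F_K)_*$ is a split surjection.
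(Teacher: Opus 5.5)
Your cellular attempt in steps 1--3 is not a proof, as you yourself note: the $E^1$-terms $H_*(X_k,\partial X_k)$ of the skeletal filtration are not concentrated in degree $k$. The argument to keep is the singular-chain one in your last two paragraphs, and that argument is correct.

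It shares the paper's basic idea. Both build a chain-level section sending $\sigma$ to a fundamental chain of the block $X(\sigma)$, whose boundary is $\sum_i(-1)^i$ times the fundamental chains of the blocks $X(\partial_i\sigma)$ by the facet-orientation convention. The implementation differs.

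The paper chooses a triangulation $T$ of $\mathcal X(K)$ and a subdivision $K'$ of $K$ making $F_K\colon T\to K'$ simplicial, with all blocks as subcomplexes. The simplicial orientation chains $\langle X(\sigma)\rangle$ then satisfy the boundary formula automatically. Moreover, $F_{K\#}\langle X(\sigma)\rangle$ equals the subdivision chain of $\sigma$ exactly, because $C_{k+1}(K'|_\sigma,K'|_{\partial\sigma})=0=C_k(K'|_{\partial\sigma})$ forces a unique top-degree representative of the relative fundamental class. The degree-one hypothesis is used exactly there.

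You avoid the PL input of a simplicial model for $F_K$. Instead, you make the boundary formula hold exactly in singular chains by correcting a relative representative by a chain in $\partial X_k$. You then replace the exact chain identity by an acyclic-carrier homotopy. That homotopy only needs $(F_K)_\#\Sigma(\sigma)$ to be supported in the contractible simplex $\sigma$ and both chain maps to agree on vertices. In particular, degree one is never invoked; it is in fact a consequence.

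Two assertions in your inductive step deserve a line each.
\begin{enumerate}
\item That $c_k:=\sum_i(-1)^i(j^k_{[k]\setminus\{i\}})_\#z_{k-1}$ is a cycle follows from (S3). It identifies each composite $j^k_{[k]\setminus\{i\}}\circ j^{k-1}_{[k-1]\setminus\{l\}}$ with the appropriate $j^k_T$, so the usual simplicial cancellation occurs.
\item That $[c_k]=\partial[X_k]$ in $H_{k-1}(\partial X_k)$ follows by comparing local orientations at interior points of the facets, using (S4) and the orientation convention. This suffices because every component of the closed manifold $\partial X_k$ contains such points.
\end{enumerate}
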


\begin{proof}
After taking compatible subdivisions, choose a triangulation $T$ of $\mathcal X(K)$ and a subdivision $K'$ of $K$ such that every block $X(\sigma)$ and every distinguished face is a subcomplex of $T$ and the structure map is simplicial,
$
F_K\colon T\longrightarrow K'.
$
Let
$
s\colon
C_*(K;\mathbb Z)
\longrightarrow
C_*(K';\mathbb Z)
$
be the subdivision chain map.

For every oriented $k$-simplex
$
\sigma=[v_0<\cdots<v_k]
$
of $K$, let
$
\langle X(\sigma)\rangle
\in
C_k(T;\mathbb Z)
$
denote the simplicial orientation chain of the corresponding oriented block $X(\sigma)$.

For $k\geq1$, the orientation convention in Definition~\ref{def:oriented-degree-one-system} gives
$
\partial\langle X(\sigma)\rangle
=
\sum_{i=0}^k
(-1)^i
\langle X(\partial_i\sigma)\rangle.
$
For a vertex $v$, define
$
\langle X(v)\rangle:=\bar v,
$
where $\bar v$ is the corresponding vertex of $\mathcal X(K)$. Therefore the assignment
$
j(\sigma)
:=
\langle X(\sigma)\rangle
$
extends linearly to a chain map
$
j\colon
C_*(K;\mathbb Z)
\longrightarrow
C_*(T;\mathbb Z).
$

We claim that
$
F_{K\#}\circ j=s,
$
where
$
F_{K\#}\colon
C_*(T;\mathbb Z)
\longrightarrow
C_*(K';\mathbb Z)
$
is the simplicial chain map induced by $F_K$.

Let $\sigma$ be an oriented $k$-simplex of $K$. The restriction
$
F_K|_{X(\sigma)}
\colon
\bigl(X(\sigma),\partial X(\sigma)\bigr)
\longrightarrow
\bigl(\sigma,\partial\sigma\bigr)
$
has degree $+1$. Hence
$
F_{K\#}\langle X(\sigma)\rangle
$
represents the positive relative fundamental class of
$(\sigma,\partial\sigma)$ in
$
H_k\bigl(
K'|_\sigma,
K'|_{\partial\sigma};
\mathbb Z
\bigr).
$
The subdivision chain $s(\sigma)$ represents the same relative fundamental class.

Since $K'|_\sigma$ has dimension $k$ and $K'|_{\partial\sigma}$ has dimension $k-1$, we obtain
$
C_{k+1}\bigl(
K'|_\sigma,
K'|_{\partial\sigma};
\mathbb Z
\bigr)
=
0
$
and
$
C_k\bigl(
K'|_{\partial\sigma};
\mathbb Z
\bigr)
=
0.
$
It follows that a relative homology class in degree $k$ has at most one simplicial $k$-cycle representative.
Therefore
$
F_{K\#}\langle X(\sigma)\rangle
=
s(\sigma).
$
Since this holds for every oriented simplex $\sigma$, we obtain
$
F_{K\#}\circ j=s.
$

Passing to homology gives
$
F'_{K*}\circ j_*=s_*,
$
where
$
F'_{K*}\colon
H_q(T;\mathbb Z)
\longrightarrow
H_q(K';\mathbb Z)
$
is induced by the simplicial map $F_K\colon T\to K'$.
The subdivision homomorphism
$
s_*\colon
H_q(K;\mathbb Z)
\longrightarrow
H_q(K';\mathbb Z)
$
is an isomorphism.
Under the canonical identifications
$
H_q(T;\mathbb Z)
\cong
H_q(\mathcal X(K);\mathbb Z)
$
and
$
H_q(K';\mathbb Z)
\cong
H_q(K;\mathbb Z)
$
given by triangulation and subdivision invariance, the homomorphism induced by the structure map satisfies
$
(F_K)_*
=
s_*^{-1}\circ F'_{K*}.
$
Consequently,
$
(F_K)_*\circ j_*
=
s_*^{-1}\circ F'_{K*}\circ j_*
=
s_*^{-1}\circ s_*
=
\operatorname{id}_{H_q(K;\mathbb Z)}.
$
Thus $(F_K)_*$ is split surjective in every degree.
\end{proof}

Our next goal is to understand the local behavior of the asphericalization. We first recall a few standard notions from PL topology.
Let $K$ be a simplicial complex, let $a\in K$ be a $k$-simplex, and write $a^\circ$ for its relative interior.
The \emph{link} of $a$ in $K$, denoted $\Lk_K(a)$, is the subcomplex
\[
\Lk_K(a)\;=\;\{\;\tau\in K \mid \tau\cap a=\emptyset,\ \tau\cup a\in K\;\}.
\]
Equivalently, $\Lk_K(a)$ consists of those faces complementary to $a$ inside the star of $a$. If $b$ is an $n$-simplex of $K$ with $a<b$, then $\Lk_b(a)$ is the simplex spanned by the vertices of $b$ not in $a$, hence $\Lk_b(a)\cong \Delta^{\,n-k-1}$.

The \emph{dual cone} of $a$ in $K$, denoted $\mathrm{Dual}_K(a)$, is the cone on $\Lk_K(a)$:
\[
\mathrm{Dual}_K(a)\;=\;\mathrm{Cone}\big(\Lk_K(a)\big).
\]
Its \emph{open dual cone} is defined by $\mathrm{Dual}_K^{\circ}(a):=\mathrm{Dual}_K(a)\setminus \Lk_K(a)$.

If $a$ is a $k$-face of an $n$-simplex $\Delta^n$, then $\mathrm{Dual}_{\Delta^n}(a)\cong \Delta^{\,n-k}$ and $\mathrm{Dual}_{\Delta^n}^{\circ}(a)\cong \mathbb{R}^{\,n-k}_{+}$. In particular, every $k$-simplex $a$ admits a canonical open product neighborhood in $ K$ of the form
\[
a^\circ \times \mathrm{Dual}_K^{\circ}(a).
\]
We will use this product structure repeatedly below.

We now record the corresponding local structure of the direct simplex replacement.

For a simplex $a\in K$, write
\[
X(a)^\circ
:=
X(a)\setminus
\bigcup_{b\subsetneq a}X(b)
\]
for the relative interior of the replacement block $X(a)$.

\begin{lemma}[Local product structure]
\label{lem:local-geometry}
Let $a\in K$ be a simplex.
Then $X(a)^\circ$ admits a PL open neighborhood $U_a$ in $\mathcal X(K)$ together with a PL homeomorphism
\[
U_a
\cong_{\PL}
X(a)^\circ\times\Dual_K^\circ(a).
\]
Moreover, there is a PL open neighborhood $V_a$ of $a^\circ$ in $K$ with
\[
V_a
\cong_{\PL}
a^\circ\times\Dual_K^\circ(a)
\]
such that $F_K(U_a)\subseteq V_a$ and, under these product identifications,
\[
F_K|_{U_a}
=
\left(F_K|_{X(a)^\circ}\right)
\times
\operatorname{id}_{\Dual_K^\circ(a)}.
\]
\end{lemma}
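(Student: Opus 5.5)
The plan is to build $U_a$ block by block from the compatible collars of condition~\textnormal{(S5)}, and to build $V_a$ in the same way from the standard face collars $\kappa_S^k$. Let $\dim a=\ell$. For each simplex $\sigma\supseteq a$ of $K$, with $\dim\sigma=k$ and $S=S_\sigma(a)$, the collar
\[
c_S^k\colon X_\ell\times[0,\varepsilon)^{[k]\setminus S}\longrightarrow X(\sigma)
\]
restricts on $X(a)^\circ\times[0,\varepsilon)^{[k]\setminus S}$ to an embedding onto a neighborhood of $X(a)^\circ$ in $X(\sigma)$. The coordinates indexed by $[k]\setminus S$ correspond bijectively to the vertices of $\sigma$ not in $a$, that is, to the vertices of the simplex $\Lk_\sigma(a)$. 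Hence $[0,\varepsilon)^{[k]\setminus S}$ is identified with a neighborhood of the cone point in the cone on $\Lk_\sigma(a)$, namely the open $\varepsilon$-part of $\Dual_\sigma(a)$. We define $U_a$ as the union over $\sigma\supseteq a$ of these collar images. We define $V_a$ as the union over $\sigma\supseteq a$ of the images $\kappa_S^k\bigl(a^\circ\times[0,\varepsilon)^{[k]\setminus S}\bigr)$, transported into $\sigma$ by $a_\sigma$.

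The key step is gluing these pieces. If $a\subseteq\tau\subseteq\sigma$, set $T=S_\sigma(a)$ and $S=S_\sigma(\tau)$. The nested compatibility in~\textnormal{(S5)},
\[
c_T^k(x,(u,v))=c_S^k\bigl(c^{|S|-1}_{\operatorname{pos}_S(T)}(x,u),v\bigr),
\]
shows that setting $v=0$ recovers, via $j_S^k$ and~\textnormal{(S3)}, the collar of $X(a)$ in $X(\tau)$. Thus the pieces $X(a)^\circ\times[0,\varepsilon)^{\sigma\setminus a}$ are glued along exactly the coordinate subfaces by which the orthants $[0,\varepsilon)^{\sigma\setminus a}$ are glued inside $\Dual_K^\circ(a)$. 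By Remark~\ref{rem:replacement-gluing}, no other identifications occur in the colimit, so the colimit of the pieces is $X(a)^\circ\times\operatorname*{colim}_\sigma[0,\varepsilon)^{\sigma\setminus a}$. This gives $U_a\cong X(a)^\circ\times\Dual_K^\circ(a)$, after a radial PL reparametrization of the $\varepsilon$-neighborhood of the cone point to the whole open dual cone. The same argument applied to the compatible standard collars $\kappa$ gives $V_a\cong a^\circ\times\Dual_K^\circ(a)$.

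Openness of $U_a$ will follow from the exact preimage condition~\textnormal{(S6)(b)} together with the fact that each collar is onto a neighborhood of $X_k[S]$. Concretely, a point of $\mathcal X(K)$ near $X(a)^\circ$ lies in some block $X(\sigma)$ with $\sigma\supseteq a$, since blocks of simplices not containing $a$ meet $X(a)$ only in $X(b)$ for proper faces $b\subsetneq a$, which are disjoint from $X(a)^\circ$. We may need to shrink $\varepsilon$ by a compactness argument, using that $K$ is finite. Finally, \textnormal{(S6)(c)} gives
\[
f_k\bigl(c_S^k(x,u)\bigr)=\kappa_S^k\bigl(f_\ell(x),u\bigr),
\]
so after applying $a_\sigma$ the map $F_K$ is $F_K|_{X(a)^\circ}\times\operatorname{id}$ blockwise; these blockwise identities agree on overlaps, and $F_K(U_a)\subseteq V_a$.

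The main obstacle is making the identification of $\operatorname*{colim}_{\sigma\supseteq a}[0,\varepsilon)^{\sigma\setminus a}$ with $\Dual_K^\circ(a)$ precise and PL. In particular, the orthant coordinate identifications from~\textnormal{(S5)} (via $\operatorname{pos}_S(T)$) must match the simplicial gluing of the faces of $\Lk_K(a)$. The approach is to check that both are indexed by vertex sets of simplices of $\Lk_K(a)$, with gluing given by inclusion of vertex sets. Then both are the standard cone-on-link neighborhood, realized as the union of orthants $\prod_{w\in\rho}[0,\varepsilon)$ over simplices $\rho\in\Lk_K(a)$.
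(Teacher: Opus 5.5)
Your proposal is correct and follows essentially the same route as the paper. Both arguments restrict the (S5) collars of $X(a)$ to $X(a)^\circ$ over all cofaces $\sigma\supseteq a$, glue them via the nested compatibility so that the normal orthants assemble into a truncated dual cone, do the same with the standard collars $\kappa$ to get $V_a$, and read off the product form of $F_K$ from (S6)(c). Your additional details (the $v=0$ specialization of the nested identity, the openness argument, and the explicit identification of the colimit of orthants with the dual cone) fill in steps that the paper only asserts.
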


\begin{proof}
Let $\sigma\in K$ be a simplex containing $a$, and put $S:=S_\sigma(a)$.
Under the identification $X(\sigma)\cong X_{\dim\sigma}$, the subspace $X(a)\subseteq X(\sigma)$ is the distinguished face $X_{\dim\sigma}[S]$.
Condition~\textnormal{(S5)} therefore gives a product collar of $X(a)$ in $X(\sigma)$ whose normal coordinates are indexed by the vertices of $\sigma$ not contained in $a$.

Restricting these collars to $X(a)^\circ$, and allowing $\sigma$ to vary over all simplices containing $a$, the compatibility condition in \textnormal{(S5)} shows that the resulting product neighborhoods agree on their common faces.
Hence they glue in $\mathcal X(K)$.
The normal orthants glue according to the cofaces of $a$ in $K$, giving a truncated dual cone of $a$, which is PL homeomorphic to $\Dual_K^\circ(a)$.
Thus they determine a PL open neighborhood $U_a$ of $X(a)^\circ$ with $U_a \cong_{\PL} X(a)^\circ\times\Dual_K^\circ(a)$.

The compatible standard collars $\kappa_S^{\dim\sigma}$ give, in the same way, a PL open neighborhood $V_a$ of $a^\circ$ with $V_a \cong_{\PL} a^\circ\times\Dual_K^\circ(a)$.

By condition~\textnormal{(S6)(c)}, on the collar corresponding to every coface $\sigma\supseteq a$, the structure map preserves the normal coordinates and restricts on the distinguished face $X(a)$ to $F_K|_{X(a)}$.
These identities are compatible on common faces by condition~\textnormal{(S6)(a)}, and hence glue over all cofaces of $a$.
Therefore, in the above product coordinates, $F_K|_{U_a} = \left(F_K|_{X(a)^\circ}\right) \times \operatorname{id}_{\Dual_K^\circ(a)}$.
\end{proof}

The local product structure shows that the direct simplex replacement has the same normal local structure as the original simplicial complex.
We record the following consequences.

\begin{corollary}[Preservation of PL manifolds]
\label{thm:manifold}
If $K$ is a PL $n$-manifold with possibly empty boundary, then $\mathcal X(K)$ is a PL $n$-manifold with possibly empty boundary.
Moreover,
\[
\partial\mathcal X(K)
=
\mathcal X(\partial K)
=
F_K^{-1}(\partial K).
\]
\end{corollary}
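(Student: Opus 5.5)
The plan is to verify the manifold condition point by point, using the stratification of $\mathcal X(K)$ by the relative interiors $X(a)^\circ$, $a\in K$. Every point $x\in\mathcal X(K)$ lies in exactly one such stratum: it lies in some block, hence in $X(a)^\circ$ for the smallest face $a$ whose block contains it. Uniqueness follows from Remark~\ref{rem:replacement-gluing}, which gives $X(\sigma)\cap X(\sigma')=X(\sigma\cap\sigma')$. Fix such $x$ with $\dim a=\ell$. By Lemma~\ref{lem:local-geometry}, $x$ has a neighborhood
\[
U_a\cong_{\PL}X(a)^\circ\times\Dual_K^\circ(a).
\]
This reduces the problem to two local models: the first factor is governed by the structure of a single block, and the second only by $K$.

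For the first factor, note that $X(a)\cong X_\ell$ is a PL $\ell$-manifold with faces by (S1). By (S4), its boundary is exactly the union of the distinguished facets, so $X(a)^\circ=X(a)\setminus\partial X(a)$ is an open PL $\ell$-manifold without boundary. For the second factor, since $K$ is a PL $n$-manifold, Definition~\ref{def:PLmfd} gives $\Lk_K(a)\cong S^{n-\ell-1}$ or $B^{n-\ell-1}$. The open dual cone $\Dual_K^\circ(a)$ is the open cone on this link, so near the cone point it is PL homeomorphic to $\R^{n-\ell}$ in the first case and to the half-space $\R^{n-\ell-1}\times[0,\infty)$ in the second. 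The point $x$ corresponds to a pair (point of $X(a)^\circ$, cone point). Hence $x$ has a neighborhood PL homeomorphic to $\R^n$ or to $\R^{n-1}\times[0,\infty)$, with $x$ on the boundary hyperplane exactly in the second case. This shows $\mathcal X(K)$ is a PL $n$-manifold. It is compact, being a finite union of compact blocks, and a compatible triangulation exists as in the proof of Lemma~\ref{lem:homological-surjectivity}.

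For the boundary identification, the local model shows that $x\in\partial\mathcal X(K)$ if and only if $\Lk_K(a)$ is a ball. By Definition~\ref{def:PLmfd}, this happens if and only if $a^\circ\subset\partial K$, that is, $a\in\partial K$. Since $x\in X(a)^\circ$ and $X(a)\subseteq\mathcal X(\partial K)$ whenever $a\in\partial K$, we get $\partial\mathcal X(K)\subseteq\mathcal X(\partial K)$. Conversely, every point of $\mathcal X(\partial K)$ lies in $X(a)^\circ$ for some $a\in\partial K$, which gives the reverse inclusion. The second equality $\mathcal X(\partial K)=F_K^{-1}(\partial K)$ was already recorded after Remark~\ref{rem:structure-map-well-defined}, as a consequence of (S6)(b).

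The main obstacle is the local step: justifying that a neighborhood of the cone point in $\Dual_K^\circ(a)$ is PL a Euclidean space or half-space, and that the product of this with the open manifold $X(a)^\circ$ gives the standard chart at $x$ rather than only a homeomorphism. I would handle this by appealing to the standard PL facts that the open cone on $S^{m-1}$ (respectively $B^{m-1}$) is PL homeomorphic to $\R^m$ (respectively the half-space, with the cone point on the boundary), and that products of PL manifold charts are PL charts.
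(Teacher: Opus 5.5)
Your proposal is correct and follows essentially the same argument as the paper: stratify $\mathcal X(K)$ by the relative interiors $X(a)^\circ$, apply the local product lemma, and read off a Euclidean or half-space chart from the link condition on $K$. The only difference is cosmetic. You obtain $\partial\mathcal X(K)=\mathcal X(\partial K)$ directly from the strata and then cite the face-preimage identity. The paper instead first shows $\partial\mathcal X(K)=F_K^{-1}(\partial K)$ and then invokes (S6)(b) to identify this with $\mathcal X(\partial K)$.
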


\begin{proof}
Let $x\in\mathcal X(K)$.
There is a unique simplex $a\in K$ such that $x\in X(a)^\circ$.
By Lemma~\ref{lem:local-geometry}, a neighborhood of $x$ in $\mathcal X(K)$ is PL homeomorphic to a neighborhood of $(x,0)$ in
$X(a)^\circ\times\Dual_K^\circ(a)$.

Since $X(a)^\circ$ is a PL $\dim a$-manifold without boundary, every point of $X(a)^\circ$ has a neighborhood PL homeomorphic to $\mathbb R^{\dim a}$.
On the other hand, since $K$ is a PL $n$-manifold, Definition~\ref{def:PLmfd} gives
\[
\Lk_K(a)\cong_{\PL}
\begin{cases}
S^{n-\dim a-1}, & a^\circ\subset\operatorname{int}K,\\[2mm]
B^{n-\dim a-1}, & a^\circ\subset\partial K.
\end{cases}
\]
Consequently,
\[
\Dual_K^\circ(a)\cong_{\PL}
\begin{cases}
\mathbb R^{n-\dim a}, & a^\circ\subset\operatorname{int}K,\\[2mm]
\mathbb R^{n-\dim a-1}\times[0,\infty), & a^\circ\subset\partial K.
\end{cases}
\]
It follows that $x$ has a neighborhood PL homeomorphic to $\mathbb R^n$ in the first case and to $\mathbb R^{n-1}\times[0,\infty)$ in the second.
Thus $\mathcal X(K)$ is a PL $n$-manifold, and
$x\in\partial\mathcal X(K)$ if and only if $F_K(x)\in\partial K$. Therefore,
$\partial\mathcal X(K)=F_K^{-1}(\partial K)$.

Since $F_K^{-1}(\partial K)=\mathcal X(\partial K)$ by the exact face-preimage property, the asserted boundary identity follows.
\end{proof}

\begin{corollary}[Preservation of orientation]
\label{thm:oriented}
If $K$ is an oriented PL $n$-manifold, then $\mathcal X(K)$ is naturally oriented.
\end{corollary}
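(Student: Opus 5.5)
The plan is to build the orientation of $\mathcal X(K)$ block by block from the fixed orientations of the $X_k$, and then to check that these block orientations agree across every interior codimension-one face. By Corollary~\ref{thm:manifold}, $\mathcal X(K)$ is a PL $n$-manifold. Orient $K$ by choosing, for each $n$-simplex $\sigma=[v_0<\cdots<v_n]$, the sign $\epsilon_\sigma\in\{\pm1\}$ for which $\epsilon_\sigma\cdot a_\sigma$ is orientation-preserving. This means $\epsilon_\sigma=+1$ exactly when the vertex ordering of $\sigma$ agrees with the orientation of $K$. Then give the block $X(\sigma)\cong X_n$ the orientation $\epsilon_\sigma$ times the fixed orientation of $X_n$. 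Since $F_K|_{X(\sigma)}=a_\sigma\circ f_n$ and $f_n$ has degree $+1$, each block orientation is the one pulled back from the orientation of $\sigma\subseteq K$ via $F_K$. The same construction works if some components of $K$ have lower dimension, taking top simplices componentwise; I will assume $K$ is pure of dimension $n$, as it is for a manifold.

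The key step is the local compatibility check. The top blocks $X(\sigma)$ cover $\mathcal X(K)$, and they overlap only along lower-dimensional faces $X(\tau)$. By Lemma~\ref{lem:local-geometry}, a neighbourhood of a point $x\in X(\tau)^\circ$ with $\dim\tau=n-1$ has the form $X(\tau)^\circ\times\Dual_K^\circ(\tau)$, and $F_K$ is a product there. In the interior case, $\Dual_K^\circ(\tau)\cong\mathbb R$ is the union of two half-lines, one in each of the two $n$-simplices $\sigma,\sigma'$ containing $\tau$.

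Consistency of the block orientations across $X(\tau)$ then reduces to an inequality of induced boundary orientations. The required condition is that the boundary orientations on $X(\tau)$ induced from $X(\sigma)$ and from $X(\sigma')$ are opposite. By Definition~\ref{def:oriented-degree-one-system}, the orientation induced from $X(\sigma)$ is $\epsilon_\sigma(-1)^{i}$ times the orientation of $X_{n-1}$ transported via $j^n_{S_\sigma(\tau)}$, where $\tau=\partial_i\sigma$. The analogous statement holds for $\sigma'$, with $\tau=\partial_{i'}\sigma'$. The orientation of $K$ induces opposite boundary orientations on $\tau$ from $\sigma$ and $\sigma'$, and that boundary orientation is $\epsilon_\sigma(-1)^i$ times the ordered orientation of $\tau$. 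Hence $\epsilon_\sigma(-1)^i=-\epsilon_{\sigma'}(-1)^{i'}$, which is exactly the required condition.

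The gluing identifies $X(\tau)$ in both blocks through the same model $X_{n-1}$ (Remark~\ref{rem:replacement-gluing}), so the two induced orientations are indeed opposite. Therefore the local orientations glue over a neighbourhood of every interior codimension-one face.

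It remains to extend to faces of codimension at least $2$. I would argue that the set of points where a local orientation is defined and consistent contains the complement of the codimension-$\ge2$ strata $\bigcup_{\dim a\le n-2}X(a)$. This complement is open, and its complement has codimension at least $2$ in the manifold $\mathcal X(K)$. Removing a codimension-$\ge2$ closed polyhedron does not affect orientability, because the local homology orientation sheaf is locally constant and each link is a connected sphere or ball. So the orientation extends uniquely to all of $\mathcal X(K)$.

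Alternatively, and more cleanly, the chain $\sum_\sigma\epsilon_\sigma\langle X(\sigma)\rangle$ from the proof of Lemma~\ref{lem:homological-surjectivity} is a relative cycle, by the boundary formula there. It is $j$ applied to the fundamental cycle of $K$, and it restricts to a generator of local homology at every point of the top-dimensional stratum. A fundamental class of the manifold is characterised by its local generators at interior points of top simplices, so this relative cycle is a fundamental class.

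The main obstacle is verifying the sign bookkeeping, in particular that the face convention $(-1)^i$ in the simplex system matches the simplicial boundary convention in $K$. The natural orientation is then characterised by the property that $F_K$ has degree $+1$ on $(\mathcal X(K),\partial\mathcal X(K))\to(K,\partial K)$.
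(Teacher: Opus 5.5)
Your proposal is correct and follows essentially the same route as the paper. Like the paper, you orient each top block $X(\sigma)$ by $\epsilon_\sigma$ times the fixed orientation of $X_n$, and then use the facet convention of Definition~\ref{def:oriented-degree-one-system} together with $\epsilon_\sigma(-1)^i=-\epsilon_{\sigma'}(-1)^{i'}$ to see that the orientations induced on each interior $X(\tau)$ are opposite. Your additional extension across the codimension-$\ge2$ strata (or the alternative via the cycle $j([K])$) makes explicit a step the paper leaves implicit when it says the block orientations ``determine a global orientation.''
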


\begin{proof}
For every $n$-simplex $\sigma=[v_0<\cdots<v_n]$ of $K$, let
$\epsilon_\sigma\in\{+1,-1\}$ be the sign such that the orientation of $\sigma$ induced from the given orientation of $K$ is $\epsilon_\sigma$ times its ordered orientation.
Orient the block $X(\sigma)\cong X_n$ by $\epsilon_\sigma$ times the fixed orientation of $X_n$.

We verify that these block orientations agree under the gluings.
Let $\sigma$ and $\sigma'$ be two $n$-simplices meeting along an $(n-1)$-simplex $\tau$.
Write $\tau=\partial_i\sigma=\partial_j\sigma'$ with respect to the ordered vertex sets of $\sigma$ and $\sigma'$.
Since the orientation of $K$ is coherent, the orientations induced on the common face $\tau$ from $\sigma$ and $\sigma'$ are opposite.
That is, $\epsilon_\sigma(-1)^i
=
-\epsilon_{\sigma'}(-1)^j$.

By Definition~\ref{def:oriented-degree-one-system}, the boundary orientation on the distinguished facet of $X(\sigma)$ corresponding to $\tau$ is
$\epsilon_\sigma(-1)^i$ times the fixed orientation of $X_{n-1}$, whereas the boundary orientation on the corresponding facet of $X(\sigma')$ is
$\epsilon_{\sigma'}(-1)^j$ times the fixed orientation of $X_{n-1}$.
The preceding identity therefore shows that these two induced boundary orientations are opposite on their common copy $X(\tau)$. Hence the orientations of the top-dimensional blocks glue across every interior facet and determine a global orientation of $\mathcal X(K)$.
\end{proof}

We now establish the global asphericity property of the construction.
We use the standard graph-of-spaces gluing theorem: if a space is obtained by gluing aspherical vertex spaces along aspherical edge spaces whose attaching maps are $\pi_1$-injective, then every path component of the resulting space is aspherical, and the inclusions of the vertex spaces induce monomorphisms on fundamental groups; see, for example,~\cite[pp.~156--157]{Scott-Wall:1979}.
When an edge space is disconnected, we regard each of its path components as a separate edge space.

\begin{theorem}[Asphericity and $\pi_1$-injectivity]
\label{thm:aspherical-manifold}
Suppose that $\mathcal X_{\leq n}$ is an asphericalized simplex system.
Then, for every subcomplex $L\subseteq K$, each path component of $\mathcal X(L)$ is aspherical, and its inclusion into $\mathcal X(K)$ induces a monomorphism on fundamental groups.
In particular, $\mathcal X(K)$ is aspherical.

Consequently, if $K$ is a PL $n$-manifold, then $\mathcal X(K)$ is an aspherical PL $n$-manifold.
If $K$ is oriented, then $\mathcal X(K)$ is naturally oriented.
\end{theorem}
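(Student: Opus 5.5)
We argue by induction on the number of simplices of the subcomplex, building $\mathcal X(L)$ one block at a time and invoking the graph-of-spaces gluing theorem of Scott--Wall at each step. It is convenient to prove a slightly stronger statement simultaneously: for every pair of subcomplexes $L'\subseteq L\subseteq K$, each path component of $\mathcal X(L)$ is aspherical and each path component of $\mathcal X(L')$ is $\pi_1$-injective in $\mathcal X(L)$. The theorem is the case $L=K$ together with the case $L'=L$ arbitrary. Injectivity is transitive, so it suffices to treat the case where $L$ is obtained from $L'$ by adding a single simplex $\sigma$ all of whose proper faces already lie in $L'$. Choosing an ordering of the simplices of $K$ by dimension, or any linear extension of the face order, gives such a chain from $L'$ up to $L$.

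For the inductive step, let $L=L'\cup\sigma$ with $\dim\sigma=k$ and $\partial\sigma\subseteq L'$. By Remark~\ref{rem:replacement-gluing}, we have $X(\sigma)\cap\mathcal X(L')=\mathcal X(\partial\sigma)$, which is identified with $X_k[\partial\Delta^k]$. Hence
\[
\mathcal X(L)=\mathcal X(L')\cup_{X_k[\partial\Delta^k]}X(\sigma).
\]
This expresses $\mathcal X(L)$ as a graph of spaces with the following pieces.
\begin{itemize}
\item The vertex spaces are the components of $\mathcal X(L')$, which are aspherical by induction, together with $X(\sigma)\cong X_k$, which is aspherical by condition~\textnormal{(A)} with $P=\Delta^k$.
\item The edge spaces are the path components of $X_k[\partial\Delta^k]$, which are aspherical by condition~\textnormal{(A)}.
\end{itemize}
Injectivity of the edge maps into $X(\sigma)$ is exactly condition~\textnormal{(A)}. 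Injectivity into $\mathcal X(L')$ requires the statement that each component of $\mathcal X(\partial\sigma)$ is $\pi_1$-injective in $\mathcal X(L')$, and this is the inductive hypothesis applied to the pair $\partial\sigma\subseteq L'$. The gluing theorem then yields that each component of $\mathcal X(L)$ is aspherical and that each vertex space, in particular each component of $\mathcal X(L')$, injects.

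To keep the induction well founded, we induct on the number of simplices of $L$ and prove injectivity for all subcomplexes $L'\subseteq L$ at once. For such an $L'$, a chain $L'=L_0\subset L_1\subset\cdots\subset L_m=L$ of single-simplex additions exists, because the simplices of $L\setminus L'$ can be added in order of increasing dimension. Each step uses injectivity for the pair $\partial\sigma\subseteq L_{i}$, where $L_i$ is smaller than $L$, and asphericity of $L_i$; both are available by induction. We also need injectivity of a component $C$ of $\mathcal X(L_0)$ through several steps. If $C$ lies in a vertex space at each stage, injectivity follows by composition. Components merging is handled by the gluing theorem, since a vertex-space component of the new graph of spaces injects into the component of the total space containing it.

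The main obstacle is the bookkeeping with disconnected pieces. The edge space $\mathcal X(\partial\sigma)$ may be disconnected; this happens for example when $k=1$ and it is two points, or more generally for components of face unions. It may also meet several components of $\mathcal X(L')$ or the same component several times, which creates HNN-type loops in the graph. We handle this by regarding each path component as a separate edge, as the paper prescribes, so that the Scott--Wall theorem applies to a genuine graph of groups with injective edge maps; basepoint issues are absorbed into the choice of paths in the graph.

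The remaining claims then follow from earlier results. Corollary~\ref{thm:manifold} gives that $\mathcal X(K)$ is a PL $n$-manifold when $K$ is, which together with asphericity gives an aspherical PL $n$-manifold. Orientability follows from Corollary~\ref{thm:oriented}.
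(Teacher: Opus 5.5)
Your proposal is correct and follows essentially the paper's argument. Both proofs induct on the number of simplices, write the larger replacement as $\mathcal X(L')\cup_{\mathcal X(\partial\sigma)}X(\sigma)$, apply condition~(A) together with the graph-of-spaces theorem (treating each component of $\mathcal X(\partial\sigma)$ as a separate edge space), and obtain injectivity for general subcomplexes by composing one-simplex steps. The only cosmetic difference is that the paper removes a maximal simplex of $K$ not in $L$, so that $L\subseteq K\setminus\{\sigma\}$, whereas you build a chain upward from $L'$; this is the same bookkeeping.
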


\begin{proof}
We prove simultaneously, by induction on the number of simplices of the ambient simplicial complex, that the direct replacement is componentwise aspherical and that the replacement of every subcomplex is componentwise $\pi_1$-injective in it.

The assertion is immediate for a $0$-dimensional complex.
Assume that it holds for all simplicial complexes having fewer simplices than $K$.

We first prove that $\mathcal X(K)$ is componentwise aspherical.
Choose a maximal simplex $\sigma\in K$, say $\dim\sigma=k$, and set $K':=K\setminus\{\sigma\}$.
Since $\sigma$ is maximal, $K'$ is a subcomplex of $K$ and $K'\cap\sigma=\partial\sigma$.
By Remark~\ref{rem:replacement-gluing}, $\mathcal X(K) = \mathcal X(K') \cup_{\mathcal X(\partial\sigma)} X(\sigma)$.

By the induction hypothesis, every path component of $\mathcal X(K')$ is aspherical, and the inclusion $\mathcal X(\partial\sigma) \longrightarrow \mathcal X(K')$ is componentwise $\pi_1$-injective.
On the other hand, under the identification $X(\sigma)\cong X_k$, one has $\mathcal X(\partial\sigma) = X_k[\partial\Delta^k]$.
Condition~\textnormal{(A)}, applied to $P=\Delta^k$ and $P=\partial\Delta^k$, implies that $X(\sigma)$ and every path component of $\mathcal X(\partial\sigma)$ are aspherical and that $\mathcal X(\partial\sigma) \longrightarrow X(\sigma)$ is componentwise $\pi_1$-injective.
The graph-of-spaces gluing theorem therefore implies that every path component of $\mathcal X(K)$ is aspherical and that $\mathcal X(K') \longrightarrow \mathcal X(K)$ is componentwise $\pi_1$-injective.

It remains to prove the assertion for an arbitrary proper subcomplex $L\subsetneq K$.
Choose a maximal simplex $\sigma$ of $K$ which is not contained in $L$.
Such a simplex exists because $K$ is finite and $L$ is a subcomplex.
Again set $K':=K\setminus\{\sigma\}$.
Then $L\subseteq K'$.
By the induction hypothesis, the inclusion $\mathcal X(L) \longrightarrow \mathcal X(K')$ is componentwise $\pi_1$-injective.
By the preceding gluing argument, $\mathcal X(K') \longrightarrow \mathcal X(K)$ is also componentwise $\pi_1$-injective.
Their composite therefore gives a monomorphism $\pi_1(C) \longrightarrow \pi_1(\mathcal X(K))$ for every path component $C$ of $\mathcal X(L)$.

Finally, since $L$ itself has fewer simplices than $K$ whenever $L\subsetneq K$, the induction hypothesis also shows that every path component of $\mathcal X(L)$ is aspherical.
This completes the induction.

The final assertions follow from Corollaries~\ref{thm:manifold} and~\ref{thm:oriented}.
\end{proof}

\subsection{Relative asphericalization}

We end this section by introducing a relative version of the asphericalization.

Let $J\subseteq K$ be a subcomplex, and let $J=J_1\sqcup\cdots\sqcup J_r$ be the decomposition of $J$ into its path components.
For each $i$, introduce a new vertex $c_i$ and attach the cone $c_i*J_i$ to $K$ along its base $J_i$.
Thus we form the simplicial complex $K\cup CJ := K \cup_{J_1}(c_1*J_1) \cup\cdots\cup_{J_r}(c_r*J_r)$.
We extend the ordering of the vertices of $K$ to $K\cup CJ$, for example by requiring $c_1<\cdots<c_r<v$ for every vertex $v$ of $K$.

Consider the direct asphericalization $\mathcal X(K\cup CJ)$ and, for each $i$, let $\bar c_i:=X(c_i)\in\mathcal X(K\cup CJ)$ be the point corresponding to the cone vertex $c_i$.
Since $\Lk_{K\cup CJ}(c_i)=J_i$, Lemma~\ref{lem:local-geometry} gives a PL open neighborhood $U_i$ of $\bar c_i$ with $U_i \cong_{\PL} \Dual_{K\cup CJ}^{\circ}(c_i) \cong_{\PL} \operatorname{Cone}(J_i)\setminus J_i$.
Moreover, under this identification the structure map is the identity in the conical coordinate.

Choose pairwise disjoint closed PL neighborhoods $C_i\subset U_i$ of the points $\bar c_i$, each corresponding to a closed truncated cone on $J_i$.
Then $C_i\cong_{\PL}\operatorname{Cone}(J_i)$ and there is a natural PL identification $\theta_i\colon \partial C_i \xrightarrow{\cong_{\PL}} J_i$.

\begin{definition}[Relative asphericalization]\label{def:relative-asphericalization}
The \emph{relative asphericalization of $K$ with respect to $J$} is defined by
\[
\mathcal X(K,J)
:=
\mathcal X(K\cup CJ)
\setminus
\bigcup_{i=1}^r\operatorname{int}C_i.
\]
Via the identifications $\theta_i\colon\partial C_i\cong_{\PL}J_i$, we regard each $J_i$ as a distinguished subspace of $\mathcal X(K,J)$.
\end{definition}

\begin{remark}
\label{rem:relative-asphericalization-choice}
The PL type of $\mathcal X(K,J)$ does not depend on the choice of the sufficiently small truncated conical neighborhoods $C_i$.
Indeed, in the fixed conical coordinates of Lemma~\ref{lem:local-geometry}, any two such choices are related by a radial PL isotopy supported in $U_i$ and equal to the identity outside $U_i$.
Since the neighborhoods $U_i$ are pairwise disjoint, these isotopies may be performed simultaneously, giving a PL homeomorphism between the resulting relative asphericalizations which is compatible with the canonical boundary identifications $\partial C_i\cong_{\PL}J_i$.
\end{remark}

Thus relative asphericalization is obtained by first coning off each component of $J$, applying the ordinary asphericalization to the resulting cone-off, and then removing a small open conical neighborhood of each point lying over a cone vertex.
Equivalently, $\partial C_i\cong_{\PL}J_i$ is the link of the deleted conical end in the resulting relative space.

\begin{remark}
\label{rem:relative-manifold}
Suppose that $K$ is a PL $n$-manifold and that $J=J_1\sqcup\cdots\sqcup J_r$ is a union of connected components of $\partial K$.
Then $(K\cup CJ)\setminus\{c_1,\ldots,c_r\}$ is a PL $n$-manifold.

In fact, away from $J$ this is immediate.
Near a point of $J_i$, a PL collar $J_i\times[0,1) \subseteq K$ and the punctured cone $(c_i*J_i)\setminus\{c_i\} \cong_{\PL} J_i\times(0,1]$ glue along $J_i$ to give a PL product neighborhood with a full normal coordinate.
Thus the only possible singular points of $K\cup CJ$ are the cone vertices $c_i$.

By Lemma~\ref{lem:local-geometry}, the same conclusion holds after direct asphericalization.
That is, $\mathcal X(K\cup CJ) \setminus \{\bar c_1,\ldots,\bar c_r\}$ is a PL $n$-manifold.
Removing the interiors of the conical neighborhoods $C_i$ therefore gives a compact PL $n$-manifold $\mathcal X(K,J)$.
Moreover, each $\partial C_i\cong_{\PL}J_i$ is a boundary component of $\mathcal X(K,J)$.

If $J^{c}:= \overline{\partial K\setminus J}$ denotes the union of the remaining boundary components of $K$, then
\[
\partial\mathcal X(K,J)
=
\left(
\bigsqcup_{i=1}^r\partial C_i
\right)
\sqcup
\mathcal X(J^{c}),
\]
and hence, after identifying $\partial C_i$ with $J_i$, $\partial\mathcal X(K,J) \cong_{\PL} J\sqcup\mathcal X(J^{c})$.
In particular, if $J=\partial K$, then $\partial\mathcal X(K,\partial K) \cong_{\PL} \partial K$.
\end{remark}

\begin{remark}
\label{rem:relative-orientation}
If $K$ is oriented and $J$ is a union of boundary components, then $\mathcal X(K,J)$ carries a natural orientation.
In fact, the orientation of $K$ extends across $(c_i*J_i)\setminus\{c_i\}$ so that the two induced orientations along the common copy of $J_i$ are opposite.
Every top-dimensional simplex $\sigma$ of $K\cup CJ$ has interior contained in the oriented manifold $(K\cup CJ)\setminus\{c_1,\ldots,c_r\}$, and orienting the corresponding replacement block $X(\sigma)$ by the sign of this simplex orientation makes the block orientations agree across every common codimension-one distinguished face by the same facet-orientation calculation as in the proof of Corollary~\ref{thm:oriented}.
Hence $\mathcal X(K\cup CJ)\setminus\{\bar c_1,\ldots,\bar c_r\}$ is oriented, and deleting the interiors of the conical neighborhoods $C_i$ gives the asserted orientation on $\mathcal X(K,J)$.
\end{remark}

\begin{example}
Let $M$ be a closed PL $(n-1)$-manifold and set $K=M\times[0,1]$ and $J=M\times\{0,1\}$.
Then $\mathcal X\bigl(M\times[0,1],\,M\times\{0,1\}\bigr)$ is a compact PL $n$-manifold whose two boundary components are naturally identified with copies of $M$.
Thus it is a PL cobordism from $M$ to $M$.
\end{example}

%%%%%%%%%%%%%%%%%%%%%%%%%%%%%%%%%%%
%%%%%%%%%Section 3%%%%%%%%%%%%%%%%%
%%%%%%%%%%%%%%%%%%%%%%%%%%%%%%%%%%%

\section{Toroidal asphericalized simplices}
\label{sec:toroidal-simplex}

In this section, we construct an explicit asphericalized simplex system, called the system of \emph{toroidal simplices}, which will serve as the basic building block for the quantitative bordisms developed below.
The construction is modeled on Gromov's cylinder construction as formulated by Davis--Januszkiewicz~\cite[Section~4c]{Davis-Januszkiewicz:1991-1}, but is adapted to the direct simplex replacement of Section~\ref{sec:asph}.

We first recall the reflection-cylinder construction.

\begin{definition}[PL reflection]
Let $Y$ be a PL space and let $r\colon Y\to Y$ be a PL involution.
We call $r$ a \emph{PL reflection} if there is a PL subspace $A\subseteq Y$ such that, with $B:=A\cap r(A)$, one has $B=\operatorname{Fix}(r)$ and the natural map
\[
D(A,B):=
\bigl(A\times\{-1,+1\}\bigr)/{\sim}
\longrightarrow Y,
\]
where $(b,-1)\sim (b,+1)$ for every $b\in B$, defined by
\[
[a,+1]\longmapsto a,
\qquad
[a,-1]\longmapsto r(a),
\]
is a PL homeomorphism.
The subspace $A$ is called a \emph{half-space} for $r$.

If $Y$ is a PL manifold and the half-space $A$ is a PL manifold with boundary $B=\operatorname{Fix}(r)$, then, following Davis--Januszkiewicz~\cite[Section~4c]{Davis-Januszkiewicz:1991-1}, we call $r$ a \emph{locally linear PL reflection}.
\end{definition}

Let $r$ be a PL reflection on $Y$ with half-space $A$.
Following~\cite[Section~4c]{Davis-Januszkiewicz:1991-1}, define
\[
\Omega(Y,A,r)
:=
\bigl(Y\times[-1,1]\bigr)/{\sim},
\]
where
\[
(y,-1)\sim(y,1)
\qquad
\text{for every }y\in r(A).
\]
Let
\[
q\colon
Y\times[-1,1]
\longrightarrow
\Omega(Y,A,r)
\]
be the quotient map.
The boundary of $\Omega(Y,A,r)$ is the image of $A\times\{-1,+1\}$ and is naturally identified with $D(A,B)$, hence with $Y$.

We will repeatedly use the following consequence of the reflection-cylinder construction.

\begin{lemma}[Reflection cylinder]
\label{lem:reflection-cylinder}
Let $Y$ be a closed PL $n$-manifold and let $r$ be a locally linear PL reflection on $Y$ with half-space $A$.
Then $\Omega(Y,A,r)$ is a compact PL $(n+1)$-manifold with
\[
\partial\Omega(Y,A,r)\cong_{\PL}Y.
\]
If $Y$ is aspherical, then $\Omega(Y,A,r)$ is aspherical and the boundary inclusion induces a monomorphism
\[
\pi_1(Y)
\longrightarrow
\pi_1\bigl(\Omega(Y,A,r)\bigr).
\]
\end{lemma}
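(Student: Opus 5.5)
The plan is to split $\Omega:=\Omega(Y,A,r)$ into two pieces glued along $r(A)$, handle the manifold structure locally, and get asphericity and $\pi_1$-injectivity from a graph-of-spaces argument using the involution $r$.

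\textbf{Manifold structure and boundary.} Write $A':=r(A)$, so that $Y=A\cup_B A'$ with $B=\operatorname{Fix}(r)=\partial A=\partial A'$. Then $\Omega$ is obtained from the cylinder $Y\times[-1,1]$ by gluing the two copies $A'\times\{-1\}$ and $A'\times\{+1\}$ via the identity.
\begin{enumerate}
\item Away from $B\times\{\pm1\}$, the points of $\Omega$ visibly have Euclidean or half-space neighborhoods. Gluing the two ends along $\operatorname{int}A'$ creates interior points, while $\operatorname{int}A\times\{\pm1\}$ remain boundary points.
\item At a point $b\in B$, use local linearity: near $b$, the triple $(Y,A,A')$ is modeled on $\mathbb R^{n-1}\times(\mathbb R,[0,\infty),(-\infty,0])$.
\item In the coordinate model $\mathbb R^{n-1}\times\bigl(\mathbb R\times[-1,1]\bigr)/\!\sim$, the $2$-dimensional factor is a strip with the left halves of its top and bottom edges identified. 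Near the seam point this is PL homeomorphic to a half-plane, whose boundary is the union of the two right half-edges.
\end{enumerate}
Hence $\Omega$ is a compact PL $(n+1)$-manifold. Its boundary is the image of $A\times\{\pm1\}$, glued along $B$, which is $D(A,B)\cong_{\PL}Y$.

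\textbf{Asphericity.} The same decomposition gives $\Omega\simeq (Y\times[-1,1])\cup_{A'\times\{-1\}\sqcup A'\times\{1\}}(A'\times[0,1])$, i.e. a mapping-torus-like graph of spaces. It has one vertex space $Y$ and one edge space $A'$, attached at both ends by the inclusion $A'\hookrightarrow Y$, so the edge is a loop. To apply the graph-of-spaces theorem of Scott--Wall quoted before Theorem~\ref{thm:aspherical-manifold}, I need three facts.
\begin{enumerate}
\item Each component of $A'$ is aspherical.
\item Each such component is $\pi_1$-injective in $Y$.
\item $Y$ itself is aspherical, which is given.
\end{enumerate}
For (2), use the retraction $\rho\colon Y\to A'$ equal to the identity on $A'$ and to $r$ on $A$; it is well defined since $r|_B=\mathrm{id}$. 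Because $A'\hookrightarrow Y\xrightarrow{\rho}A'$ is the identity, the inclusion is injective on $\pi_1$ for each component. For (1), since $A'$ is a retract of $Y$, the map $\pi_k(A')\to\pi_k(\tilde Y)$ is injective for $k\ge2$ on universal-cover level. More concretely, $\pi_k(A')$ is a retract of $\pi_k(Y)=0$ for $k\ge 2$, so $A'$ is aspherical. If $A'$ is disconnected, each component is treated as a separate edge, as the paper allows. The gluing theorem then shows that $\Omega$ is aspherical and that the vertex space $Y\times[-1,1]$ is $\pi_1$-injective in $\Omega$.

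\textbf{Boundary injectivity.} The boundary $\partial\Omega$ is the image of $D(A,B)$, namely $A\times\{1\}\cup_B A\times\{-1\}$. Under $\Omega\to$ its graph-of-spaces structure, the composite $\partial\Omega\cong Y\to\Omega$ is homotopic to the following map: send $[a,+1]\mapsto (a,1)$ and $[a,-1]\mapsto (a,-1)$, and join them through the vertex space $Y\times[-1,1]$.

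To make this cleaner, I would instead build a retraction $R\colon\Omega\to\partial\Omega$ up to homotopy. Take $R(y,t)=[\rho'(y),\pm]$, where $\rho'\colon Y\to A$ is the fold $a\mapsto a$, $r(a)\mapsto a$, and the sign is chosen by which half $y$ lies in. This is continuous on the glued copies of $A'\times\{\pm1\}$ only up to the sign switch, so it fails in general. Failing that, I would argue via normal forms: $\pi_1(\Omega)$ is an HNN extension $\pi_1(Y)*_{\pi_1(A')}$ with stable letter $t$ acting trivially. Then $\pi_1(\partial\Omega)=\pi_1(A)*_{\pi_1(B)}\pi_1(A)$ maps by $g\mapsto g$ on the first factor and $g\mapsto t\,r(g)\,t^{-1}$-type conjugates on the second, and Britton's lemma gives injectivity. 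I expect this boundary $\pi_1$-injectivity, i.e. matching the decomposition of $\partial\Omega$ with the HNN structure and checking reduced words, to be the main obstacle.
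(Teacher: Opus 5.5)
Your manifold argument (near the seam, the strip with the left halves of its two edges identified is a half-plane) is correct. So is your asphericity argument: a homotopy pushout with vertex space $Y$ and a loop edge $A'=r(A)$, the fold $Y\to A'$ making $A'$ a retract and hence aspherical and $\pi_1$-injective, then Scott--Wall. Both are more explicit than the paper, which only cites \cite[Proposition~4c.2]{Davis-Januszkiewicz:1991-1}.

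\textbf{The gap.} The last claim, injectivity of $\pi_1(\partial\Omega)\to\pi_1(\Omega)$, is left open, and the route you sketch does not work. Your formula $\pi_1(\partial\Omega)=\pi_1(A)*_{\pi_1(B)}\pi_1(A)$ presumes two things the hypotheses do not give.
\begin{enumerate}
\item It presumes $B$ is connected. In the paper's first step, $Y_1=\mathcal X_{\leq1}(\partial\Delta^2)\cong S^1$ and $B$ is two points. Then $\pi_1(A)=1$, yet $\pi_1(\partial\Omega)\cong\mathbb Z$. Its generator crosses the seam twice and maps to the commutator $sts^{-1}t^{-1}$ in $\pi_1(\Omega)=F(s,t)$, the punctured torus. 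Any description using only $\pi_1(A)$ and $t$-conjugates of it misses the part of $\pi_1(\partial\Omega)$ coming from the graph structure of $\partial\Omega$ along $B$.
\item It presumes each component of $B$ is $\pi_1$-injective in $A$, which can fail even when $Y$ is aspherical. Take $A=P\natural(N\times I)$ with $P$ a Mazur manifold and $N$ a closed aspherical $3$-manifold. Then $D(A,B)\cong N\times S^1$, but the component $\partial P\# N$ of $B$ is not $\pi_1$-injective in $A$.
\end{enumerate}
So on the $\partial\Omega$ side there is no amalgam with injective edge maps, and Britton's lemma has nothing to act on.

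\textbf{How to close it.} You can supply the missing step geometrically, with no injectivity along $B$.
\begin{enumerate}
\item Pass to the universal cover. $\widetilde\Omega$ is a tree of spaces over the Bass--Serre tree $T$ of your HNN splitting. Its vertex spaces are copies of $\widetilde Y\times[-1,1]$ and its edge spaces are lifts of the seam $A'$; this uses the injectivities you already proved.
\item Let $\partial_0$ be a component of the preimage of $\partial\Omega$. It is a union of lifts of $A\times\{\pm1\}$ glued along lifts of components of $B$.
\item Each lift of $A$ is simply connected. The other fold ($r$ on $A'$, identity on $A$) makes $A$ a retract of $Y$, and $\pi_1(Y)\to\pi_1(\Omega)$ is injective.
\item Map the dual graph of $\partial_0$ to $T$. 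Send a lift of $A\times\{\pm1\}$ to the vertex space containing it, and a lift of a component of $B$ to the seam lift containing it.
\item This map is locally injective. Two edges at one vertex with the same image would give, in the simply connected manifold $\widetilde Y$, a lift of $A$ and a lift of $A'$ sharing two boundary components. That is impossible, because each lift of a component of $B$ is a two-sided properly embedded hypersurface and therefore separates $\widetilde Y$.
\item A connected graph immersing into a tree is a tree. So $\partial_0$ is a tree of simply connected pieces glued along connected subspaces, and van Kampen shows it is simply connected. The gluing loci only need to be connected, not $\pi_1$-injective.
\end{enumerate}
Simple connectivity of $\partial_0$ is exactly injectivity of $\pi_1(\partial\Omega)\to\pi_1(\Omega)$.
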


\begin{proof}
This is the PL and asphericity part of~\cite[Proposition~4c.2]{Davis-Januszkiewicz:1991-1}.
\end{proof}

We now incorporate the reflection into our coherent simplex system.
For $k\geq1$, let
\[
\rho_k\colon\Delta^k\longrightarrow\Delta^k
\]
be the simplicial involution induced by the transposition $(0\,1)$, that is,
\[
\rho_k(e_0)=e_1,
\qquad
\rho_k(e_1)=e_0,
\qquad
\rho_k(e_i)=e_i
\quad
(i\geq2),
\]
and set $\rho_0=\operatorname{id}_{\Delta^0}$.

Suppose that a coherent PL simplex system $\mathcal X_{\leq n}=\{(X_k,f_k)\}_{k=0}^n$ has been constructed together with PL reflections $r_k\colon X_k\longrightarrow X_k$, where $0\leq k\leq n$, and chosen half-spaces $H_k\subseteq X_k$, satisfying $f_k\circ r_k=\rho_k\circ f_k$.
For $k\geq1$, we furthermore require that, near every point of $\operatorname{Fix}(r_k)$, the pair $(X_k,H_k)$ has the standard PL half-space model in which $r_k$ acts by $(z,t)\mapsto(z,-t)$ and $H_k$ corresponds to $t\geq0$, with the model taken productwise with the appropriate boundary orthant at points of $\partial X_k$.

We require the reflections to be compatible with all distinguished faces as follows.
For a nonempty subset $S\subseteq[k]$, let $\rho_{k,S}\colon [|S|-1] \longrightarrow [|\rho_k(S)|-1]$ be the unique bijection satisfying $\rho_k\circ\iota_S = \iota_{\rho_k(S)}\circ\rho_{k,S}$.
Since $\rho_k$ is induced by the transposition $(0\,1)$, one has
\[
\rho_{k,S}
=
\begin{cases}
(0\,1), & \{0,1\}\subseteq S,\\
\operatorname{id}, & \{0,1\}\nsubseteq S.
\end{cases}
\]
Accordingly, define
\[
\epsilon_S
:=
\begin{cases}
r_{|S|-1}, & \{0,1\}\subseteq S,\\
\operatorname{id}_{X_{|S|-1}},
& \{0,1\}\nsubseteq S.
\end{cases}
\]
We require $r_k\circ j_S^k = j_{\rho_k(S)}^k\circ\epsilon_S$ for every nonempty $S\subseteq[k]$.

The product collars in condition~\textnormal{(S5)} are chosen equivariantly as well.
More precisely, if
\[
u=(u_a)_{a\in[k]\setminus S}
\in
[0,\varepsilon)^{[k]\setminus S},
\]
write $\rho_k u\in
[0,\varepsilon)^{[k]\setminus\rho_k(S)}$ for the vector defined by $(\rho_k u)_{\rho_k(a)}=u_a$.
Then we require
\[
r_k\bigl(c_S^k(x,u)\bigr)
=
c_{\rho_k(S)}^k
\bigl(\epsilon_S(x),\rho_k u\bigr).
\]

We furthermore choose the half-space $H_k$ compatibly with the distinguished facets.
Writing
\[
F_i^k
:=
X_k\bigl[[k]\setminus\{i\}\bigr],
\]
we require
\[
H_k\cap F_0^k=F_0^k,
\qquad
H_k\cap F_1^k=F_0^k\cap F_1^k,
\]
and, for $2\leq i\leq k$,
\[
H_k\cap F_i^k
=
j_{[k]\setminus\{i\}}^k(H_{k-1}),
\]
where $F_i^k$ is identified with $X_{k-1}$ by
$j_{[k]\setminus\{i\}}^k$.

For $n=0$, set
\[
X_0:=\Delta^0,
\qquad
f_0:=\operatorname{id}_{\Delta^0},
\qquad
r_0:=\operatorname{id}_{\Delta^0},
\qquad
H_0:=X_0.
\]
For $n=1$, set
\[
X_1:=\Delta^1,
\qquad
f_1:=\operatorname{id}_{\Delta^1},
\qquad
r_1:=\rho_1,
\]
and let $H_1$ be the closed half-interval joining $e_1$ to the midpoint of $\Delta^1$.
The required local half-space model is immediate for $(X_1,H_1,r_1)$, while $r_0$ is used only as the degenerate zero-dimensional initial datum.

Assume inductively that a reflection-compatible oriented degree-one asphericalized simplex system $\mathcal X_{\leq n} = \{(X_k,f_k)\}_{k=0}^n$ has been constructed.
Apply the direct simplex replacement to the boundary of $\Delta^{n+1}$ and set
\[
Y_n
:=
\mathcal X_{\leq n}\bigl(\partial\Delta^{n+1}\bigr),
\qquad
g_n
:=
F_{\partial\Delta^{n+1}}
\colon
Y_n
\longrightarrow
\partial\Delta^{n+1}.
\]
By Theorem~\ref{thm:aspherical-manifold}, $Y_n$ is aspherical.
By Corollaries~\ref{thm:manifold} and~\ref{thm:oriented}, it is a closed oriented PL $n$-manifold.

For $0\leq i\leq n+1$, let
\[
\sigma_i
:=
\Delta^{n+1}\bigl[[n+1]\setminus\{i\}\bigr]
\]
be the $i$-th facet of $\Delta^{n+1}$, and let
\[
Y_{n,i}
:=
\mathcal X_{\leq n}(\sigma_i)
\subseteq
Y_n.
\]
Thus each $Y_{n,i}$ is canonically identified with a copy of $X_n$.

The involution $\rho_{n+1}$ exchanges $\sigma_0$ and $\sigma_1$ and preserves each $\sigma_i$ for $i\geq2$.
Using the face-compatibility formula above, define $R_n\colon Y_n\longrightarrow Y_n$ blockwise as follows.
The map $R_n$ exchanges $Y_{n,0}$ and $Y_{n,1}$ by the canonical identification of their model blocks, while, for every $i\geq2$, its restriction to $Y_{n,i}\cong X_n$ is $r_n$.
The compatibility formula $r_k\circ j_S^k = j_{\rho_k(S)}^k\circ\epsilon_S$ shows that these blockwise maps agree on all common distinguished faces.
Hence they descend to a well-defined PL involution $R_n\colon Y_n\longrightarrow Y_n$.
By construction, $g_n\circ R_n = \rho_{n+1}\circ g_n$.

For $i\geq2$, let $H_{n,i}\subseteq Y_{n,i}$ be the copy of the chosen half-space $H_n\subseteq X_n$ under the canonical identification $Y_{n,i}\cong X_n$, and define
\[
A_n
:=
Y_{n,0}
\cup
\bigcup_{i=2}^{n+1}H_{n,i}.
\]
The compatibility conditions imposed on the half-spaces $H_k$ imply that these pieces agree on their common distinguished faces, so $A_n$ is a well-defined PL subspace of $Y_n$.

Since $R_n$ exchanges $Y_{n,0}$ and $Y_{n,1}$ and exchanges the two half-spaces of every invariant block $Y_{n,i}$, $i\geq2$, one has $Y_n = A_n\cup R_n(A_n)$.
Moreover, $A_n\cap R_n(A_n) = \operatorname{Fix}(R_n)$.
In fact, on the exchanged pair $Y_{n,0}\cup Y_{n,1}$ the fixed points are precisely those lying in their common distinguished faces fixed by the induced labeled involution, while on each invariant block $Y_{n,i}$, $i\geq2$, the assertion follows from $H_n\cap r_n(H_n) = \operatorname{Fix}(r_n)$.
These descriptions agree on all intersections by the inductive face-compatibility condition.
It follows that the natural map $D\bigl(A_n,\operatorname{Fix}(R_n)\bigr) \longrightarrow Y_n$ is a PL homeomorphism.
Thus $A_n$ is a half-space for $R_n$.

It remains to verify that $A_n$ is a PL $n$-manifold with boundary $\operatorname{Fix}(R_n)$.
Since
\[
Y_n\cong_{\PL}D\bigl(A_n,\operatorname{Fix}(R_n)\bigr),
\]
every point of $A_n\setminus\operatorname{Fix}(R_n)$ has a PL manifold neighborhood in $A_n$, so it suffices to consider points of $\operatorname{Fix}(R_n)$.
Let $x\in\operatorname{Fix}(R_n)$, and let $S$ be the smallest distinguished face whose relative interior contains $x$.
If $S$ contained exactly one of $0$ and $1$, then $\rho_{n+1}(S)\neq S$, and $R_n$ would carry the relative interior of the $S$-stratum to the disjoint relative interior of the $\rho_{n+1}(S)$-stratum; hence such a stratum contains no fixed point.
Thus either $\{0,1\}\subseteq S$ or $\{0,1\}\cap S=\emptyset$.
If $\{0,1\}\subseteq S$, then the inductive local half-space model for the corresponding lower-dimensional reflection, together with the equivariant product collars, gives a PL half-space neighborhood of $x$ in $A_n$ whose boundary is $\operatorname{Fix}(R_n)$.
If $\{0,1\}\cap S=\emptyset$, then the compatible normal orthants glue to the full dual-cone neighborhood, $R_n$ interchanges the coordinates $u_0$ and $u_1$, and after setting $s=u_0+u_1$ and $t=u_0-u_1$, the involution is $(z,s,t)\mapsto(z,s,-t)$ while, after possibly replacing $t$ by $-t$, $A_n$ corresponds locally to $t\geq0$.
Indeed, in these coordinates the two local chambers exchanged by $R_n$ are separated by the fixed hyperplane $t=0$, and taking the product with the remaining normal coordinates and the stratum identifies the pair $\bigl(Y_n,A_n\bigr)$ locally, preserving the labeled coordinate strata, with the standard PL reflection pair $\bigl(\mathbb R^{n-1}\times\mathbb R,\mathbb R^{n-1}\times[0,\infty)\bigr)$.
Hence $A_n$ is a PL $n$-manifold with boundary $\operatorname{Fix}(R_n)$.
Since $Y_n$ is a closed PL $n$-manifold, $R_n$ is therefore a locally linear PL reflection.
After an equivariant subdivision, we may moreover assume that $R_n$ is simplicial and $A_n$ is a subcomplex.

Define $X_{n+1} := \Omega(Y_n,A_n,R_n)$.
By Lemma~\ref{lem:reflection-cylinder}, $X_{n+1}$ is a compact PL $(n+1)$-manifold and
\[
\partial X_{n+1}
\cong_{\PL}
Y_n
=
\mathcal X_{\leq n}\bigl(\partial\Delta^{n+1}\bigr).
\]
Since $Y_n$ is oriented, the product orientation on $Y_n\times[-1,1]$ descends through the reflection-cylinder identification: the two identified portions of the end slices carry opposite boundary orientations.
Moreover, the locally linear reflection $R_n$ is orientation reversing, so the canonical identification $\partial X_{n+1}\cong_{\PL}Y_n$ may be taken orientation preserving.
We orient $X_{n+1}$ accordingly.

Let $q\colon Y_n\times[-1,1] \longrightarrow X_{n+1} = \Omega(Y_n,A_n,R_n)$ be the quotient map.
The involution $(y,t)\longmapsto(y,-t)$ preserves the equivalence relation defining the reflection cylinder and therefore descends to a PL involution $r_{n+1}\colon X_{n+1} \longrightarrow X_{n+1}$.
Set $H_{n+1} := q\bigl(Y_n\times[0,1]\bigr)$.
Then $X_{n+1} = H_{n+1}\cup r_{n+1}(H_{n+1})$, and $H_{n+1}\cap r_{n+1}(H_{n+1}) = \operatorname{Fix}(r_{n+1})$.
In fact, the fixed set consists of $q\bigl(Y_n\times\{0\}\bigr)$ together with the points of the two end slices identified by the reflection-cylinder relation.
Consequently, $D\bigl(H_{n+1},\operatorname{Fix}(r_{n+1})\bigr)\longrightarrow X_{n+1}$ is a PL homeomorphism.

The quotient charts of the reflection-cylinder construction show that near the fixed set $r_{n+1}$ has the local form $(z,t)\longmapsto(z,-t)$.
Hence $r_{n+1}$ is a locally linear PL reflection with half-space $H_{n+1}$.

Under the canonical identification $\partial X_{n+1}\cong_{\PL}Y_n$, the restriction of $r_{n+1}$ to the boundary is precisely $R_n$.
It follows from the construction of $A_n$ that $r_{n+1}$ satisfies the same face-compatibility and half-space-incidence conditions as the reflections $r_k$, $k\leq n$.
Thus the reflection data extend from $\mathcal X_{\leq n}$ to $\mathcal X_{\leq n+1}$.

Through this identification, the blocks of $Y_n$ corresponding to the proper faces of $\Delta^{n+1}$ define the distinguished faces of $X_{n+1}$.
For each proper nonempty subset $S\subsetneq[n+1]$, the distinguished face $X_{n+1}[S]\subseteq\partial X_{n+1}=Y_n$ is identified with $X_{|S|-1}$, and the inductively chosen collar structure on $Y_n$ identifies its normal germ with the boundary of the standard orthant $[0,\varepsilon)^{[n+1]\setminus S}$, with the coordinate faces carrying the prescribed vertex labels.
Combining this normal model with a product collar of $Y_n=\partial X_{n+1}$ in $X_{n+1}$ gives a collar $c_S^{n+1}\colon X_{|S|-1}\times[0,\varepsilon)^{[n+1]\setminus S}\to X_{n+1}$ of $X_{n+1}[S]$.
Here the identification of the boundary-orthant normal model times the boundary-collar coordinate with the full orthant is chosen label-preservingly and inductively over the face poset, relative to all coordinate faces already fixed in lower dimensions; such a choice exists by the relative PL Alexander trick, and therefore its restriction to every coordinate suborthant is exactly the previously chosen identification.
These orthant identifications are chosen $\mathbb Z/2$-equivariantly, so that
\[
r_{n+1}\bigl(c_S^{n+1}(x,u)\bigr)
=
c_{\rho_{n+1}(S)}^{n+1}
\bigl(\epsilon_S(x),\rho_{n+1}u\bigr);
\]
hence the collar-equivariance condition is preserved in dimension $n+1$.
For $\emptyset\neq T\subseteq S$, the nested-face identity for $c_T^{n+1}$ and $c_S^{n+1}$ now follows exactly from the inductive identity on $Y_n$ together with the preceding relative choice of the orthant identifications.
Thus the face embeddings, their intersections, and these collars satisfy conditions~\textnormal{(S3)--(S5)} for $X_{n+1}$.

We next define the structure map $f_{n+1}\colon X_{n+1}\longrightarrow\Delta^{n+1}$.
Choose compatible collars
\[
c\colon
Y_n\times[0,\varepsilon)
\longrightarrow
X_{n+1}
\qquad\text{and}\qquad
\kappa\colon
\partial\Delta^{n+1}\times[0,\varepsilon)
\longrightarrow
\Delta^{n+1}
\]
so that $r_{n+1}\bigl(c(y,u)\bigr) = c\bigl(R_n(y),u\bigr)$ and $\rho_{n+1}\bigl(\kappa(z,u)\bigr) = \kappa\bigl(\rho_{n+1}(z),u\bigr)$.
On the collar, define $f_{n+1}\bigl(c(y,u)\bigr) := \kappa\bigl(g_n(y),u\bigr)$.
Since $g_n\circ R_n = \rho_{n+1}\circ g_n$, this prescribed collar map is $\mathbb Z/2$-equivariant.

We now extend it equivariantly over the interior.
Choose $0<\delta<\varepsilon$ and set
\[
D
:=
\overline{
X_{n+1}
\setminus
c\bigl(Y_n\times[0,\delta)\bigr)
}.
\]
Then $\partial D = c\bigl(Y_n\times\{\delta\}\bigr)$.
Since $\delta>0$, the prescribed map on $\partial D=c\bigl(Y_n\times\{\delta\}\bigr)$ takes values in $\operatorname{int}\Delta^{n+1}$.
Choose a triangulation of $\partial D$ on which this prescribed map is affine on every simplex, and extend it to a triangulation of $D$.
Choose a point $b\in\operatorname{int}\Delta^{n+1}$,
for instance the barycenter.
Define a map on the vertices of this triangulation by retaining the prescribed values on the vertices of $\partial D$ and sending every interior vertex of $D$ to $b$, and extend affinely over every simplex.
This gives a PL map $h\colon D\longrightarrow \operatorname{int}\Delta^{n+1}$ extending the prescribed map on $\partial D$.
In fact, the image of every vertex lies in $\operatorname{int}\Delta^{n+1}$, and since $\operatorname{int}\Delta^{n+1}$ is convex, the affine image of every simplex is contained in $\operatorname{int}\Delta^{n+1}$.
On each simplex of $\partial D$, the resulting affine map agrees with the prescribed map because the two maps agree on its vertices.
Together with the prescribed map on the boundary collar, this gives a PL map $h\colon X_{n+1}\longrightarrow\Delta^{n+1}$ satisfying $h\bigl(X_{n+1}\setminus\partial X_{n+1}\bigr) \subseteq \operatorname{int}\Delta^{n+1}$.

Take a common $r_{n+1}$-invariant subdivision on which both $h$ and $\rho_{n+1}\circ h\circ r_{n+1}$ are PL, and define
\[
f_{n+1}(x)
:=
\frac{1}{2}
\left(
h(x)
+
\rho_{n+1}\bigl(h(r_{n+1}(x))\bigr)
\right),
\]
where $\Delta^{n+1}$ is regarded as the standard convex simplex in Euclidean space.
Since $\Delta^{n+1}$ is convex, this defines a PL map $f_{n+1}\colon X_{n+1} \longrightarrow \Delta^{n+1}$.
Moreover, since $\operatorname{int}\Delta^{n+1}$ is convex, we obtain $f_{n+1} \bigl(X_{n+1}\setminus\partial X_{n+1}\bigr) \subseteq \operatorname{int}\Delta^{n+1}$.

On the chosen boundary collar the map $h$ is already equivariant.
Hence, for $x$ in this collar,
\[
\rho_{n+1}\bigl(h(r_{n+1}(x))\bigr)
=
h(x),
\]
so the symmetrization does not alter the prescribed collar map.
Finally, using $r_{n+1}^2=\operatorname{id}$ and $\rho_{n+1}^2=\operatorname{id}$, we obtain $f_{n+1}\circ r_{n+1} = \rho_{n+1}\circ f_{n+1}$.
Thus $f_{n+1}$ is a $\mathbb Z/2$-equivariant PL extension of the prescribed structure map on the boundary collar.

Since $f_{n+1}|_{\partial X_{n+1}}=g_n$, its restriction to every distinguished face agrees with the corresponding lower-dimensional structure map, and therefore condition~\textnormal{(S6)(a)} holds.
The exact face-preimage property follows immediately: for every proper nonempty subset $S\subsetneq[n+1]$, $f_{n+1}^{-1}\bigl(\Delta^{n+1}[S]\bigr) = X_{n+1}[S]$.
The product formula in the chosen collars gives condition~\textnormal{(S6)(c)}.

The orientations are chosen so that the identification $\partial X_{n+1}\cong Y_n$ preserves orientation.
With the boundary orientations induced from $\Delta^{n+1}$ and $X_{n+1}$, the fundamental chains are respectively $\sum_{i=0}^{n+1}(-1)^i[\sigma_i]$ and $\sum_{i=0}^{n+1}(-1)^i\langle X(\sigma_i)\rangle$.
Since each block map $f_n\colon X(\sigma_i)\to\sigma_i$ has degree $+1$, the induced map $(g_n)_\#$ sends the latter fundamental chain to the former.
Hence $(g_n)_*[Y_n]=[\partial\Delta^{n+1}]$, and therefore $\deg(g_n)=+1$.
By naturality of the boundary homomorphism for the pairs $(X_{n+1},\partial X_{n+1})$ and $(\Delta^{n+1},\partial\Delta^{n+1})$, it follows that
\[
f_{n+1}\colon
(X_{n+1},\partial X_{n+1})
\longrightarrow
(\Delta^{n+1},\partial\Delta^{n+1})
\]
also has degree $+1$.

It remains to verify condition~\textnormal{(A)}.

\begin{theorem}
\label{thm:toroidal-asphericalized-simplex}
For every $n\geq0$, the inductive construction above produces an asphericalized simplex system $\mathcal X_{\leq n} = \{(X_k,f_k)\}_{k=0}^n$.
In particular, $(X_n,f_n)$ is an asphericalized $n$-simplex.
\end{theorem}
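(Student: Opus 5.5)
Since the coherence data (S1)--(S6), orientation and degree one were verified during the construction, the plan is to induct on $n$ and check only condition~(A) for $X_{n+1}$. The base cases $n=0,1$ are covered by Remark~\ref{rem:low-dimensional-examples}. Assume $\mathcal X_{\leq n}$ is asphericalized. For every $k\leq n$, condition~(A) for $X_k$ is part of the hypothesis. It therefore suffices to take $k=n+1$ and an arbitrary subcomplex $P\subseteq\Delta^{n+1}$.

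If $P=\Delta^{n+1}$, then $X_{n+1}[P]=X_{n+1}$. Its boundary $Y_n=\mathcal X_{\leq n}(\partial\Delta^{n+1})$ is aspherical by Theorem~\ref{thm:aspherical-manifold}, so Lemma~\ref{lem:reflection-cylinder} shows that $X_{n+1}$ is aspherical. The same lemma also gives that $\partial X_{n+1}\hookrightarrow X_{n+1}$ is $\pi_1$-injective.

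If $P$ is a proper subcomplex, then $P\subseteq\partial\Delta^{n+1}$. Condition~(S6)(b) and the identification $\partial X_{n+1}=Y_n$ give $X_{n+1}[P]=\mathcal X_{\leq n}(P)\subseteq Y_n$, the direct simplex replacement of $P$ inside that of $\partial\Delta^{n+1}$. We apply Theorem~\ref{thm:aspherical-manifold} to $K=\partial\Delta^{n+1}$, which is connected for $n\geq1$ and has dimension $n$, with the asphericalized system $\mathcal X_{\leq n}$. This shows that each path component of $\mathcal X_{\leq n}(P)$ is aspherical and that its inclusion into $Y_n$ is $\pi_1$-injective. Composing with the injection $\pi_1(Y_n)\to\pi_1(X_{n+1})$ from the first step proves~(A) for $X_{n+1}$.

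The main point to check is that the identification of the distinguished faces $X_{n+1}[S]$ with the blocks of $Y_n$ is the one used in the direct replacement of $\partial\Delta^{n+1}$. Only then does $X_{n+1}[P]$ coincide with $\mathcal X_{\leq n}(P)$ as a subspace of $\partial X_{n+1}$, so that Theorem~\ref{thm:aspherical-manifold} applies verbatim. This holds by construction, since the faces of $X_{n+1}$ were defined through the blocks of $Y_n$.

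Basepoints need a little care: the fundamental group of each component of $X_{n+1}[P]$ is computed at a point of that component, and the injectivity statements are componentwise.

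Finally, the degenerate edge case $n=0$, where $\partial\Delta^1$ is disconnected, is already covered by the base case $X_1=\Delta^1$. Induction on $n$ then completes the proof.
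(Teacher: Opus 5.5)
Your proposal is correct and follows the paper's proof essentially step for step. You use induction with base cases $n=0,1$. For $P=\Delta^{n+1}$ and for $\pi_1$-injectivity of $Y_n\hookrightarrow X_{n+1}$, you use Lemma~\ref{lem:reflection-cylinder}, fed by asphericity of $Y_n$ from Theorem~\ref{thm:aspherical-manifold}. For proper subcomplexes you apply Theorem~\ref{thm:aspherical-manifold} to $P\subseteq\partial\Delta^{n+1}$ and compose with that injection. Your extra remarks on the face identification, basepoints, and the disconnected case $\partial\Delta^1$ are harmless clarifications that the paper leaves implicit.
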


\begin{proof}
We proceed by induction on $n$.
The assertion is immediate for $n=0,1$.

Assume that $\mathcal X_{\leq n}$ is an asphericalized simplex system.
By Theorem~\ref{thm:aspherical-manifold},
\[
Y_n
=
\mathcal X_{\leq n}\bigl(\partial\Delta^{n+1}\bigr)
\]
is aspherical.
Lemma~\ref{lem:reflection-cylinder} therefore implies that $X_{n+1}$ is aspherical and that the boundary inclusion
\[
Y_n
=
\partial X_{n+1}
\longrightarrow
X_{n+1}
\]
induces a monomorphism on fundamental groups.

Let $P\subseteq\Delta^{n+1}$ be a simplicial subcomplex.
If $P=\Delta^{n+1}$, then
\[
X_{n+1}[P]=X_{n+1},
\]
which is aspherical.

Suppose that $P\subsetneq\Delta^{n+1}$.
Then $P\subseteq\partial\Delta^{n+1}$, and by the definition of the distinguished faces,
\[
X_{n+1}[P]
=
\mathcal X_{\leq n}(P)
\subseteq
Y_n.
\]
By Theorem~\ref{thm:aspherical-manifold}, every path component of $\mathcal X_{\leq n}(P)$ is aspherical and its inclusion into $Y_n$ induces a monomorphism on fundamental groups.
Composing with the monomorphism
\[
\pi_1(Y_n)
\longrightarrow
\pi_1(X_{n+1})
\]
given by Lemma~\ref{lem:reflection-cylinder}, we conclude that every path component of $X_{n+1}[P]$ is aspherical and its inclusion into $X_{n+1}$ is $\pi_1$-injective.
Thus condition~\textnormal{(A)} holds for $X_{n+1}$.
\end{proof}

We call the asphericalized $n$-simplex constructed above the \emph{toroidal $n$-simplex}.

\begin{remark}
For $n=2$, the construction starts with $Y_1 = \mathcal X_{\leq1}(\partial\Delta^2) \cong S^1$.
The reflection cylinder $\Omega(Y_1,A_1,R_1)$ is a compact orientable surface with one boundary component and Euler characteristic $-1$.
Hence $X_2 \cong_{\PL} T^2\setminus\operatorname{int}D^2$.
Thus the first nontrivial member of the construction is precisely the punctured torus appearing in Remark~\ref{rem:low-dimensional-examples}.
Moreover, $Y_2 = \mathcal X_{\leq2}(\partial\Delta^3)$ is a closed orientable surface of genus $4$.
\end{remark}

%%%%%%%%%%%%%%%%%%%%%%%%%%%%%%%%%%%
%%%%%%%%%Section 4%%%%%%%%%%%%%%%%%
%%%%%%%%%%%%%%%%%%%%%%%%%%%%%%%%%%%

\section{Quantitative relative asphericalization}
\label{sec:rel-asph}

In this section, we apply the toroidal simplex system constructed in Section~\ref{sec:toroidal-simplex} to obtain quantitative relative asphericalizations and the bordisms used in the proof of Theorem~\ref{thm:quantitative-bordism}.

We first introduce the asphericalization groups associated with a triangulated manifold equipped with a faithful representation and an asphericalized simplex system.

\begin{definition}[Asphericalization group]
\label{def:asph-gr}
Let $G$ be a group, let $\mathcal X$ be an asphericalized simplex system of length at least $n+1$, and let $M$ be a connected PL $n$-manifold equipped with an ordered triangulation $K$ and a faithful representation $\varphi\colon\pi_1(M)\longrightarrow G$.
Equip $M\times I$ with the standard staircase triangulation induced by the ordering of the vertices of $K$.
The \emph{asphericalization group of $G$ associated with $(M,K,\mathcal X,\varphi)$} is defined by
\[
\Asph_{M,K,\mathcal X,\varphi}(G)
:=
G
\ast_{\pi_1(M)}
\pi_1\!\left(
\mathcal X(M\times I,M_+\sqcup M_-)\cup CM_-
\right),
\]
where the two homomorphisms defining the amalgamated product are $\varphi$ and the homomorphism induced by the canonical inclusion
\[
M_+
\longrightarrow
\mathcal X(M\times I,M_+\sqcup M_-)\cup CM_-.
\]
Thus the ordered triangulation $K$, and hence the induced staircase triangulation of $M\times I$, is part of the defining data of $\Asph_{M,K,\mathcal X,\varphi}(G)$.
When $K$, $\mathcal X$, and $\varphi$ are understood from the context, we write simply $\Asph_M(G)$.
\end{definition}

We next establish the quantitative estimate for the toroidal simplices that will be used throughout the remainder of the paper.

For each $n\geq1$, choose a triangulation $T_n$ of $X_n$ inductively with the following properties:
every distinguished face is a subcomplex with the triangulation inherited from the corresponding lower-dimensional toroidal simplex, the reflection $r_n$ is simplicial, and a chosen half-space for $r_n$ is a subcomplex.
We choose these triangulations inductively, compatibly with the reflection-cylinder construction and the local product coordinates of Lemma~\ref{lem:local-geometry}, so that the closed star of every distinguished vertex is a simplicial conical neighborhood in those coordinates.
Let $d_n$ denote the number of $n$-simplices of $T_n$.

\begin{lemma}
\label{lem:quantitative-Xn}
For every $n\geq1$, the toroidal $n$-simplex satisfies
\[
\Delta(X_n)
\leq
4^{n-1} \cdot n! \cdot (n+1)!.
\]
\end{lemma}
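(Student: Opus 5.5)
The plan is to prove a recursive bound on the number $d_n$ of top simplices of the triangulations $T_n$ fixed just before the lemma, following the inductive reflection-cylinder construction of Section~\ref{sec:toroidal-simplex}. Since $\Delta(X_n)\leq d_n$, it suffices to show
\[
d_1\leq 2,
\qquad
d_{n+1}\leq 4(n+1)(n+2)\, d_n \quad (n\geq 1).
\]
Writing $B_n:=4^{n-1}\cdot n!\cdot(n+1)!$, we have $B_1=2$ and $B_{n+1}/B_n=4(n+1)(n+2)$. Induction then gives $d_n\leq B_n$.

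For the base case, $X_1=\Delta^1$ must carry the half-interval $H_1$ as a subcomplex, with $r_1$ simplicial. Subdividing at the midpoint does this and gives $d_1=2$.

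For the inductive step, first triangulate $Y_n=\mathcal X_{\leq n}(\partial\Delta^{n+1})$ by gluing $n+2$ copies of $T_n$, one per facet $\sigma_i$. By strict coherence (S3)--(S4), the distinguished faces carry the inherited lower-dimensional triangulations, so the copies glue simplicially. This triangulation has exactly $(n+2)d_n$ top $n$-simplices. By the inductive hypotheses on $T_n$ ($r_n$ simplicial, $H_n$ a subcomplex) and the face-compatibility formula $r_k\circ j^k_S=j^k_{\rho_k(S)}\circ\epsilon_S$, the involution $R_n$ is simplicial and $A_n$ is a subcomplex. No further subdivision of $Y_n$ is needed for that purpose.

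Next, triangulate $Y_n\times[-1,1]$ using the subdivision $[-1,0]\cup[0,1]$. Then $Y_n\times\{0\}$, and hence $\operatorname{Fix}(r_{n+1})$ and $H_{n+1}=q(Y_n\times[0,1])$, become subcomplexes, and $(y,t)\mapsto(y,-t)$ is simplicial. Triangulate each prism $\tau\times[a,b]$ by the staircase triangulation, which has $n+1$ top simplices. This gives $2(n+1)(n+2)d_n$ top simplices. The quotient map $q$ only identifies end slices along the subcomplex $R_n(A_n)$, so it does not increase the count. It does, however, require that the identification $(y,-1)\sim(y,1)$ be simplicial and that the resulting complex still be a simplicial complex rather than merely a $\Delta$-complex.

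The remaining factor of $2$ is a budget for the technical adjustments, and I expect this to be the main obstacle. Two things must be ensured. First, the staircase triangulations must be compatible: this needs a vertex ordering of $Y_n$ in which every simplex is consistently ordered, and this ordering must also be compatible with the gluing and with $r_{n+1}$. Second, the closed star of each distinguished vertex must be a simplicial conical neighborhood in the coordinates of Lemma~\ref{lem:local-geometry}.

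I would first try to choose the ordering inductively from the vertex labels of $\Delta^{n+1}$, refined by ordering the interior vertices of each block after its boundary vertices. I would then check that $R_n$ only permutes labels $0$ and $1$ on the exchanged pair, so that reversing the roles of these two labels is absorbed by the symmetric subdivision of the interval. If a simplicial quotient or the star condition fails, I would perform one additional stellar subdivision of the interval factor, i.e.\ use four sub-intervals instead of two. This at most doubles the count, giving exactly $d_{n+1}\leq 4(n+1)(n+2)d_n$.

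Finally, I would check that each of these choices restricts on the boundary $Y_n$ to the already chosen triangulation. This makes the distinguished faces of $X_{n+1}$ carry the inherited triangulations of $X_{|S|-1}$, so the induction closes.
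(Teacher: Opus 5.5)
Your plan is essentially the paper's proof: $Y_n$ is triangulated by $n+2$ copies of $T_n$, and $[-1,1]$ is cut symmetrically into four subintervals. Each prism carries $n+1$ staircase simplices, and the end identification adds no top simplices, so $d_{n+1}\le 4(n+1)(n+2)d_n$ with $d_1=2$.

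The one difference is that the paper uses four subintervals from the start. This is forced, not slack: with only $[-1,0]\cup[0,1]$, for $v\in R_n(A_n)$ the vertical edges $(v,-1)(v,0)$ and $(v,0)(v,1)$ become distinct edges with the same endpoints after the identification, so the quotient is not a simplicial complex. With four pieces, the simplices meeting $t=-1$ and those meeting $t=1$ share only end-slice vertices. Your worry about a vertex order compatible with $R_n$ is also unnecessary: $r_{n+1}$ and the end gluing act trivially on the $Y_n$ factor, so any global vertex order on $Y_n$ works.
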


\begin{proof}
For $n=1$, let $T_1$ be the subdivision of the interval $X_1=\Delta^1$ at the fixed point of the reflection $r_1$.
Then the two reflection half-spaces are subcomplexes and $d_1=2$.

Suppose now that $n\geq2$ and that $T_{n-1}$ has been constructed.
Recall that $Y_{n-1} = \mathcal X_{\leq n-1}(\partial\Delta^n)$.
The boundary $\partial\Delta^n$ has $n+1$ facets, and the replacement of each facet is a copy of $X_{n-1}$.
Since distinct facet blocks meet only along lower-dimensional distinguished faces, the triangulation induced on $Y_{n-1}$ contains exactly $(n+1)d_{n-1}$ simplices of dimension $n-1$.

By the inductive compatibility of the triangulations with the reflections and their half-spaces, the reflection $R_{n-1}$ on $Y_{n-1}$ is simplicial and its chosen half-space $A_{n-1}$ is a subcomplex.

Recall that $X_n = \Omega(Y_{n-1},A_{n-1},R_{n-1}) = \bigl(Y_{n-1}\times[-1,1]\bigr)/{\sim}$.
Subdivide $[-1,1]$ at $-1, -\frac12, 0, \frac12, 1$.
Thus the interval is divided into four subintervals, symmetrically with respect to $t\mapsto-t$.

For every $(n-1)$-simplex $\tau$ of $Y_{n-1}$ and every one of these four subintervals $I$, triangulate the prism $\tau\times I$ by the standard staircase triangulation into $n$ $n$-simplices.
Choose the triangulations on opposite subintervals to be exchanged by $t\mapsto-t$.
Consequently, the reflection on the cylinder is simplicial.

The equivalence relation defining the reflection cylinder identifies subcomplexes of the two end slices.
Indeed, the quotient identifies the two copies of the simplicial subcomplex $R_{n-1}(A_{n-1})$ in the end slices by the simplicial identity map; since every simplex of the staircase triangulation meets an end slice in a possibly empty face and no simplex meets both end slices, no two distinct vertices of a simplex are identified, and the images of any two simplices intersect, if at all, in the image of a common face.
It follows that the quotient is an ordinary simplicial complex, hence gives a simplicial triangulation of $X_n$; moreover, the quotient introduces no new $n$-simplices and therefore does not increase their number.
By the inductive compatibility of $T_{n-1}$ and by our symmetric choice of the staircase triangulations, the quotient triangulation restricts on every distinguished face to the prescribed lower-dimensional triangulation, makes $r_n$ simplicial and $H_n$ a subcomplex, and carries the closed stars of the distinguished vertices to the simplicial conical neighborhoods determined by the inductive local product coordinates; thus no further subdivision is required.
Then, $d_n \leq 4n(n+1)d_{n-1}$. 
Iterating this inequality and using $d_1=2$, we obtain $d_n \leq 4^{n-1}n!(n+1)!$.
Since $\Delta(X_n)$ is the minimum number of $n$-simplices among all PL triangulations of $X_n$, while $T_n$ is one particular triangulation, we have $\Delta(X_n)\leq d_n$.
Therefore, $\Delta(X_n) \leq 4^{n-1}n!(n+1)!$.
\end{proof}

We now establish the group-theoretic property of the relative asphericalization that will be used in the bordism construction.

\begin{lemma}[Peripheral $\pi_1$-injectivity]
\label{lem:peripheral-pi1}
Let $\mathcal X_{\leq n}$ be an asphericalized simplex system, let $K$ be a compact connected PL $n$-manifold, and let
\[
J=J_1\sqcup\cdots\sqcup J_r\subseteq\partial K
\]
be a union of connected boundary components.
For $I\subseteq\{1,\ldots,r\}$, set
\[
W_I
:=
\mathcal X(K,J)
\cup_{\bigsqcup_{i\in I}J_i}
\bigsqcup_{i\in I}CJ_i.
\]
Then, for every $j\notin I$, the inclusion $J_j\longrightarrow W_I$ induces a monomorphism on fundamental groups.
\end{lemma}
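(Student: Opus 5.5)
The plan is to decompose $W_I$ as a graph of spaces. One vertex space will be an honest asphericalization. The others will be local pieces near the deleted conical ends $j\notin I$, and on each of these the structure map yields a retraction onto the peripheral copy of $J_j$.

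\emph{Step 1: identification of $W_I$.} Put $L:=K\cup\bigcup_{i\in I}(c_i*J_i)$, a subcomplex of $K\cup CJ$. For $i\in I$, the cone $CJ_i$ glued to $\mathcal X(K,J)$ along $\theta_i\colon\partial C_i\cong J_i$ is PL homeomorphic, rel boundary, to the deleted truncated cone $C_i$. By Remark~\ref{rem:relative-asphericalization-choice} we can therefore identify
\[
W_I\;\cong_{\PL}\;\mathcal X(K\cup CJ)\setminus\bigcup_{j\notin I}\operatorname{int}C_j .
\]
For $j\notin I$ let $P_j:=X(c_j*J_j)\setminus\operatorname{int}C_j$, meaning the union of the blocks over simplices of $c_j*J_j$ with the conical neighborhood removed. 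Then
\[
W_I=\mathcal X(L)\cup\bigcup_{j\notin I}P_j ,
\]
where $P_j$ meets $\mathcal X(L)$ exactly in $\mathcal X(J_j)$ (Remark~\ref{rem:replacement-gluing}), and distinct $P_j$ are disjoint. This is a graph of spaces: the vertex spaces are $\mathcal X(L)$ and the $P_j$, and the edge spaces are the path components of the $\mathcal X(J_j)$.

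\emph{Step 2: edge injectivity.} By Theorem~\ref{thm:aspherical-manifold}, applied to $L$ and its subcomplex $J_j$, each component of $\mathcal X(J_j)$ is $\pi_1$-injective in $\mathcal X(L)$. For the other side, the same theorem applied to $c_j*J_j\supseteq J_j$ shows that $\mathcal X(J_j)\to X(c_j*J_j)$ is $\pi_1$-injective. This map factors through $P_j$, because $C_j$ is a small neighborhood of $\bar c_j$ disjoint from $\mathcal X(J_j)$. Hence $\mathcal X(J_j)\to P_j$ is also $\pi_1$-injective. Since all edge maps are injective, the fundamental-group (Bass--Serre) part of the graph-of-spaces theorem cited before Theorem~\ref{thm:aspherical-manifold} gives that each vertex group $\pi_1(P_j)$ injects into $\pi_1(W_I)$. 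This part does not need asphericity of $P_j$.

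\emph{Step 3: the peripheral retraction.} It remains to show that $\partial C_j\cong J_j$ is $\pi_1$-injective in $P_j$. The structure map sends $P_j$ into $(c_j*J_j)\setminus\operatorname{int}(\text{small cone})\cong J_j\times[\delta,1]$. By Lemma~\ref{lem:local-geometry}, near $\bar c_j$ the structure map is the identity in the conical coordinates, so on $\partial C_j$ it is exactly $\theta_j$ onto $J_j\times\{\delta\}$. Composing with the projection $J_j\times[\delta,1]\to J_j$ gives a map $\rho\colon P_j\to J_j$ with $\rho|_{\partial C_j}=\theta_j$. Thus $\partial C_j\to P_j$ has a left inverse on $\pi_1$ and is injective. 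Composing with Step 2 proves that $J_j\to W_I$ is $\pi_1$-injective.

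The main obstacle is Step 1 together with the careful setup in Step 3. One must verify that reattaching the cones recovers $\mathcal X(K\cup CJ)$ near $\bar c_i$, and that the truncated region $C_j$ sits inside $X(c_j*J_j)$ away from $\mathcal X(J_j)$, so that the factorization in Step 2 is legitimate. One also needs the structure map on $P_j$ to land in the region where the radial projection is defined. I would handle both points by working entirely in the product chart $U_j\cong\operatorname{Cone}(J_j)\setminus J_j$ of Lemma~\ref{lem:local-geometry}, shrinking $C_j$ if necessary.
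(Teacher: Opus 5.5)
Your proposal is correct and follows essentially the paper's proof. Your $L$ and $P_j$ are the paper's $K_I$ and $T_j$, and you use the same tree-of-spaces decomposition of $W_I$, the same two edge-injectivity inputs from Theorem~\ref{thm:aspherical-manifold} (for $J_j\subseteq L$ and $J_j\subseteq c_j*J_j$), and the same retraction of $P_j$ onto $\partial C_j\cong J_j$ by radial projection composed with the structure map. The only small differences are these: the paper only needs $F(P_j)$ to avoid $c_j$, which follows from $F^{-1}(c_j)=\{\bar c_j\}$ since radial projection is defined on the whole punctured cone, so no shrinking of $C_j$ is needed; and the paper checks path-connectedness of $T_j$ explicitly, which you leave implicit.
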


\begin{proof}
For each $i$, set $Z_i := \mathcal X(c_i*J_i) \subseteq \mathcal X(K\cup CJ)$ and $T_i := Z_i\setminus\operatorname{int}C_i$.
Since $c_i*J_i$ is connected, Lemma~\ref{lem:pi1-surj} shows that $Z_i$ is path-connected. Then, every point $x\in T_i$ can be joined in $Z_i$ to $\bar c_i\in\operatorname{int}C_i$.
Truncating such a path at its first entrance into $\operatorname{int}C_i$ gives a path in $T_i$ from $x$ to $\partial C_i$.
Since $\partial C_i\cong_{\PL}J_i$ is path-connected, it follows that $T_i$ is path-connected.
By Remark~\ref{rem:replacement-gluing},
\[
\mathcal X(K,J)
=
\mathcal X(K)
\cup_{\bigsqcup_{i=1}^r\mathcal X(J_i)}
\bigsqcup_{i=1}^r T_i.
\]

We first record two $\pi_1$-injectivity properties of $T_i$.

Let $F_i := F_{c_i*J_i} \colon Z_i \longrightarrow c_i*J_i$ be the structure map.
By the exact face-preimage property, $F_i^{-1}(c_i)=\{\bar c_i\}$.
Since $\bar c_i\in\operatorname{int}C_i$, the restriction of $F_i$ to $T_i$ takes values in $(c_i*J_i)\setminus\{c_i\}$.
Let $p_i\colon (c_i*J_i)\setminus\{c_i\} \longrightarrow J_i$ be the radial projection of the punctured cone onto its base.
By the choice of $C_i$ in the local conical coordinates of Lemma~\ref{lem:local-geometry}, one has $p_i\circ F_i|_{\partial C_i} = \theta_i$.
Hence, after identifying $\partial C_i$ with $J_i$ by $\theta_i$, the map $r_i := p_i\circ F_i|_{T_i} \colon T_i \longrightarrow J_i$ restricts to the identity on $J_i$.
Thus $J_i$ is a retract of $T_i$, and consequently $\pi_1(J_i) \longrightarrow \pi_1(T_i)$ is injective.

Next, since $C_i$ is disjoint from the base $\mathcal X(J_i)\subseteq Z_i$, there are inclusions $\mathcal X(J_i) \longrightarrow T_i \longrightarrow Z_i$.
Their composite is the canonical inclusion $\mathcal X(J_i) \longrightarrow \mathcal X(c_i*J_i)$.
Since $J_i\subseteq c_i*J_i$ is a simplicial subcomplex, Theorem~\ref{thm:aspherical-manifold} implies that this composite induces a monomorphism on fundamental groups.
Therefore $\pi_1\bigl(\mathcal X(J_i)\bigr)\longrightarrow \pi_1(T_i)$ is injective.

Now fix $I\subseteq\{1,\ldots,r\}$.
For each $i\in I$, coning off the boundary component $J_i$ restores the deleted conical neighborhood.
In fact,
\[
T_i\cup_{J_i}CJ_i
\cong_{\PL}
T_i\cup_{\partial C_i}C_i
=
Z_i,
\]
where the first PL homeomorphism is relative to the common boundary $J_i\cong_{\PL}\partial C_i$.

Define
\[
K_I
:=
K
\cup_{J_i,\ i\in I}
\bigcup_{i\in I}(c_i*J_i).
\]
By the pushout description of the direct simplex replacement,
\[
\mathcal X(K_I)
=
\mathcal X(K)
\cup_{\bigsqcup_{i\in I}\mathcal X(J_i)}
\bigsqcup_{i\in I}Z_i.
\]
Consequently,
\[
W_I
\cong_{\PL}
\mathcal X(K_I)
\cup_{\bigsqcup_{j\notin I}\mathcal X(J_j)}
\bigsqcup_{j\notin I}T_j.
\]

This is a finite tree of spaces with central vertex space $\mathcal X(K_I)$, leaf vertex spaces $T_j$ for $j\notin I$, and edge spaces $\mathcal X(J_j)$.

For every $j\notin I$, the inclusion $\mathcal X(J_j) \longrightarrow \mathcal X(K_I)$ is $\pi_1$-injective by Theorem~\ref{thm:aspherical-manifold}, since
$J_j\subseteq K_I$ is a subcomplex.
We proved above that $\mathcal X(J_j) \longrightarrow T_j$ is also $\pi_1$-injective.
Therefore, by successive applications of the Seifert--van Kampen theorem and the injectivity theorem for graphs of groups~\cite[Theorem~1B.11 and Example~1B.12]{Hatcher:2002-1}, the canonical map $\pi_1(T_j) \longrightarrow \pi_1(W_I)$ is injective.

Finally, we have already shown that $\pi_1(J_j) \longrightarrow \pi_1(T_j)$ is injective.
Hence the composite $\pi_1(J_j) \longrightarrow \pi_1(T_j) \longrightarrow \pi_1(W_I)$ is injective, as required.
\end{proof}

\begin{lemma}
\label{lem:geometric}
Let $M$ be a closed connected PL $n$-manifold.
There exists a compact PL cobordism $W$ from $M_+$ to $M_-$ over a quotient group $\G$ of $\pi_1(W)$ such that:
\begin{enumerate}
\item
the composition
$\pi_1(M_-)
\longrightarrow
\pi_1(W)
\longrightarrow
\G$
is trivial;

\item
the composition
$\pi_1(M_+)
\longrightarrow
\pi_1(W)
\longrightarrow
\G$
is injective;

\item
the complexity satisfies $\Delta(W) \le C(n) \cdot \Delta(M)$, where $C(n) = (n+3) \cdot 4^n \cdot (n+1)! \cdot (n+2)!$.
\end{enumerate}
\end{lemma}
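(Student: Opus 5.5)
Fix a triangulation $K$ of $M$ with $\Delta(M)$ top simplices and order its vertices. Give $M\times I$ the staircase triangulation $K\times I$; it has $(n+1)\Delta(M)$ simplices of dimension $n+1$. Write $M_-=M\times\{0\}$ and $M_+=M\times\{1\}$, and set $J=M_-\sqcup M_+\subseteq\partial(M\times I)$.

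Let $\mathcal X$ be the toroidal system of Theorem~\ref{thm:toroidal-asphericalized-simplex}, of length $n+1$. The candidate is
\[
W:=\mathcal X(M\times I,M_+\sqcup M_-).
\]
By Remark~\ref{rem:relative-manifold}, $W$ is a compact PL $(n+1)$-manifold with $\partial W\cong M_+\sqcup M_-$, so it is a cobordism from $M_+$ to $M_-$. By Remark~\ref{rem:relative-orientation}, it is oriented when $M$ is.

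For the group, take $I=\{-\}$ in Lemma~\ref{lem:peripheral-pi1} and form $W_{\{-\}}=W\cup_{M_-}CM_-$. Set $\mathcal G:=\pi_1(W_{\{-\}})$. The inclusion $W\to W_{\{-\}}$ is surjective on $\pi_1$ by van Kampen, since the cone is simply connected and $M_-$ is connected. Hence $\mathcal G$ is a quotient of $\pi_1(W)$.
\begin{enumerate}
\item The composite $\pi_1(M_-)\to\mathcal G$ factors through $\pi_1(CM_-)=1$, so it is trivial. This gives~(1).
\item The composite $\pi_1(M_+)\to\mathcal G$ is injective by Lemma~\ref{lem:peripheral-pi1} with $j=+\notin I$. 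This gives~(2).
\end{enumerate}

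The main work is the complexity bound~(3). $W$ is obtained from $\mathcal X\bigl((K\times I)\cup C M_-\cup CM_+\bigr)$ by deleting the open conical neighborhoods $\operatorname{int}C_\pm$. I would count top simplices blockwise, using the triangulations $T_k$ from Lemma~\ref{lem:quantitative-Xn}. These restrict correctly to distinguished faces, so the block triangulations glue along the colimit.

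\textbf{Prism blocks.} Each $(n+1)$-simplex of $K\times I$ is replaced by a copy of $X_{n+1}$. This contributes at most
\[
(n+1)\Delta(M)\cdot 4^{n}(n+1)!(n+2)!.
\]

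\textbf{Cone blocks.} Each $n$-simplex of $M_\pm$ yields a cone $(n+1)$-simplex, also replaced by $X_{n+1}$. This contributes at most
\[
2\Delta(M)\cdot 4^n(n+1)!(n+2)!.
\]

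The total is $(n+3)\Delta(M)\cdot 4^n(n+1)!(n+2)!$, which is exactly $C(n)\Delta(M)$.

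The delicate point is that deleting $\operatorname{int}C_i$ must not increase the simplex count. For this I would take $C_i$ to be the closed simplicial star of $\bar c_i$ in the glued triangulation. This is legitimate because the triangulations $T_k$ were chosen so that the closed stars of distinguished vertices are simplicial conical neighborhoods in the coordinates of Lemma~\ref{lem:local-geometry}. Then removing the open star deletes simplices without subdividing any, and its boundary (the link) is identified with $J_i$ by $\theta_i$. By Remark~\ref{rem:relative-asphericalization-choice}, this choice does not change the PL type.

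I expect this last step to be the main obstacle: checking that the glued complex is a genuine simplicial complex and that the star of $\bar c_i$ is the required truncated cone. If the star turns out to be too small or not conical, the fallback is a bounded local subdivision near $\bar c_i$, which would only adjust the constant.
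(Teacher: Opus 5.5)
Your proposal is correct and follows the paper's proof step for step. It uses the same relative toroidal asphericalization $W=\mathcal X(M\times I,M_-\sqcup M_+)$, the same group $\mathcal G=\pi_1(W\cup_{M_-}CM_-)$ with Lemma~\ref{lem:peripheral-pi1} applied to $I=\{-\}$, the same count of $(n+1)+2=n+3$ replaced $(n+1)$-simplices each contributing $d_{n+1}\le 4^n(n+1)!(n+2)!$, and the same choice of $C_\pm$ as the closed simplicial stars of $\bar c_\pm$, which relies on the conical-star property built into the triangulations of Lemma~\ref{lem:quantitative-Xn}; note only that your fallback of a local subdivision would not reproduce the exact constant $C(n)$, so that property is genuinely needed for the statement as given.
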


\begin{proof}
Choose an ordered triangulation $K$ of $M$ with $f_n(K)=\Delta(M)$, put $N:=\Delta(M)$, and consider the toroidal asphericalized simplex system $\mathcal X$ of Theorem~\ref{thm:toroidal-asphericalized-simplex}.
Put $P:=M\times[0,1]$, $M_-:=M\times\{0\}$, and $M_+:=M\times\{1\}$.
Triangulate $P$ by the standard staircase triangulation associated with the chosen ordering of the vertices of $K$.

Apply the relative toroidal asphericalization to the pair $\bigl(P,M_-\sqcup M_+\bigr)$ using the truncation $\mathcal X_{\leq n+1}$ of the toroidal asphericalized simplex system, and set $W:=\mathcal X\bigl(P,M_-\sqcup M_+\bigr)$.
Since $M_-\sqcup M_+ = \partial P$, Remark~\ref{rem:relative-manifold} shows that $W$ is a compact PL $(n+1)$-manifold with $\partial W \cong_{\PL} M_- \sqcup M_+$.
Thus $W$ is a PL cobordism from $M$ to $M$.

Cone off the incoming boundary and set $W' := W\cup_{M_-}CM_-$.
Define $\G:=\pi_1(W')$.
By the Seifert--van Kampen theorem, $\G \cong \pi_1(W) \Big/ \left\langle\!\left\langle \operatorname{im} \bigl( \pi_1(M_-)\longrightarrow\pi_1(W) \bigr) \right\rangle\!\right\rangle$.
Hence the composition $\pi_1(M_-) \longrightarrow \pi_1(W) \longrightarrow \G$ is trivial.

Applying Lemma~\ref{lem:peripheral-pi1} with $J_1=M_-$, $J_2=M_+$, and $I=\{1\}$, gives a monomorphism
\[
\pi_1(M_+)
\longrightarrow
\pi_1\bigl(W\cup_{M_-}CM_-\bigr)
=
\G.
\]
This proves the first two assertions.

It remains to estimate the complexity of $W$.
For every $n$-simplex $\sigma$ of $K$, the standard staircase triangulation decomposes $\sigma\times[0,1]$ into exactly $n+1$ simplices of dimension $n+1$.
So, $\Delta(P) \leq (n+1)\Delta(M)$.

In the relative construction we first form the cone-off
\[
\widehat P
:=
P
\cup_{M_-}(c_-*M_-)
\cup_{M_+}(c_+*M_+).
\]
The staircase triangulation of $P=M\times[0,1]$ contains exactly $(n+1)N$ simplices of dimension $n+1$, while each cone $c_\pm*M_\pm$ contains exactly $N$ such simplices.
Hence $f_{n+1}(\widehat P)=(n+3)N$.
Now apply the direct toroidal asphericalization to $\widehat P$, and let $d_{n+1}$ denote the number of $(n+1)$-simplices in the compatible triangulation $T_{n+1}$ of $X_{n+1}$ used in Lemma~\ref{lem:quantitative-Xn}.
Since distinct top-dimensional replacement blocks meet only along lower-dimensional distinguished faces, the induced triangulation satisfies $f_{n+1}\bigl(\mathcal X(\widehat P)\bigr)=(n+3)d_{n+1}N$.

The points $\bar c_\pm=X(c_\pm)$ are vertices of the induced triangulation, and we take $C_\pm:=\operatorname{St}(\bar c_\pm)$.
By the compatible local product coordinates of Lemma~\ref{lem:local-geometry}, each $C_\pm$ is a simplicial cone with $\partial C_\pm=\Lk(\bar c_\pm)\cong_{\PL}M_\pm$, and this identification agrees with the radial cone coordinate used in the relative construction.
Moreover, $C_+$ and $C_-$ are disjoint by construction.
Thus $C_\pm$ are admissible conical neighborhoods in the definition of the relative asphericalization, and deleting their interiors requires no further subdivision and introduces no new top-dimensional simplices.
Since removing the interiors of $C_\pm$ introduces no new simplices, the resulting triangulation of $W$ has at most $(n+3)d_{n+1}N$ simplices of dimension $n+1$.
Therefore, by Lemma~\ref{lem:quantitative-Xn},
\[
\Delta(W)
\le
(n+3)\, \cdot 4^n\, \cdot (n+1)! \cdot (n+2)!\, \cdot \Delta(M).
\]
\end{proof}

We finally prove Theorem~\ref{thm:quantitative-bordism}.

\begin{proof}[Proof of Theorem~\ref{thm:quantitative-bordism}]
Let $W=\mathcal X\bigl(M\times[0,1],M_-\sqcup M_+\bigr)$ be the cobordism given by Lemma~\ref{lem:geometric}, let $i_\pm\colon M_\pm\longrightarrow W$ denote the boundary inclusions, and let ${i_W}_*  \colon\pi_1(W)\longrightarrow\mathcal G$ be the quotient representation constructed there.
Set $\psi:={i_W}_* \circ(i_+)_*\colon\pi_1(M_+)\longrightarrow\mathcal G$; by Lemma~\ref{lem:geometric}, $\psi$ is faithful, whereas ${i_W}_*\circ(i_-)_*$ is trivial.
Using the canonical identification $\pi_1(M_+)\cong\pi_1(M)$, form the amalgamated product $\Gamma:=G*_{\pi_1(M_+)}\mathcal G$ with respect to the faithful representations $\varphi$ and $\psi$.
By the normal form theorem for amalgamated free products, the canonical homomorphisms $i_G\colon G\longrightarrow\Gamma$ and $i_{\mathcal G}\colon\mathcal G\longrightarrow\Gamma$ are injective, and by Definition~\ref{def:asph-gr} this group is precisely $\Asph_{M,K,\mathcal X,\varphi}(G)$.
Define a representation $\Phi\colon\pi_1(W)\longrightarrow\Gamma$ by $\Phi:=i_{\mathcal G}\circ {i_W}_*$.
By the defining relation of the amalgamated product, $\Phi\circ(i_+)_*=i_{\mathcal G}\circ\psi=i_G\circ\varphi$, while $\Phi\circ(i_-)_*=1$.
Thus $\Phi$ defines a bordism over $\Gamma$ whose restriction to $M_+$ is the given representation $\varphi$ followed by the inclusion $G\hookrightarrow\Gamma$, and whose restriction to $M_-$ is trivial:
 \[
\adjustbox{scale=1, center}{%
\begin{tikzcd}
\pi_{1}(M_{+}) \arrow[d, hook, "i_{+*}"] \arrow[rd, hook, "i_{W*} \circ i_{+*}"] \arrow[r, hook, "\varphi" ]  & G \arrow[rd, hook, "i_G"]  & \\
\pi_{1}(W) \arrow[r, "i_{W*}"] & \G = \pi_{1}(W \cup C M_{-}) \arrow[r, hook, "i_{\G}"] & \Gamma := G \ast_{\pi_1(M_{+})} \G = \textup{Asph}_M(G) \\
\pi_{1}(M_{-}) \arrow[u, hook, "i_{-*}"] \arrow[ru, "\times 0"']
& &
\end{tikzcd}
}
\]
Moreover, $\Delta(W)\leq C(n)\Delta(M)$, where $C(n)=O\!\bigl(n\cdot4^n\cdot(n+1)!\cdot(n+2)!\bigr)$.
If $M$ is oriented, then $M\times[0,1]$ carries the product orientation, and Remark~\ref{rem:relative-orientation} gives $W=\mathcal X\bigl(M\times[0,1],M_-\sqcup M_+\bigr)$ its natural orientation.
\end{proof}

%%%%%%%%%%%%%%%%%%%%%%%%%%%%%%%%%%%
%%%%%%%%%Section 5%%%%%%%%%%%%%%%%%
%%%%%%%%%%%%%%%%%%%%%%%%%%%%%%%%%%%

\section{Enhanced bounds for Cheeger--Gromov rho-invariants}
\label{sec:linear-rho}

In this section, we explain how the existence of a linear bordism established in Theorem~\ref{thm:quantitative-bordism} leads to the linear bound on the Cheeger--Gromov $L^2$ $\rho$-invariant $\rho^{(2)}(M)$ stated in Theorem~\ref{thm:efficient-rho}.  
The argument follows the approach of~\cite[Section~2]{Cha:2014-1} and~\cite[Section~6]{Cha-Lim:2024-1}.

\begin{proof}[Proof of Theorem~\ref{thm:efficient-rho}]

Consider a closed, oriented PL manifold $M^{4k-1}$ together with a faithful representation $\varphi \colon \pi_{1}(M) \to G$.  
Since the $L^2$ $\rho$-invariant and the simplicial complexity behave additively under disjoint unions, one may assume $M$ is connected.  
Theorem~\ref{thm:quantitative-bordism} ensures the existence of a bordism $W$ over $\Gamma$ from $(M_{+},\varphi)$ to a trivial end $(M_{-}, *)$.
Furthermore, the construction satisfies the quantitative control
\[
\Delta(W) \le C(n)\, \cdot \Delta(M),
\qquad C(n) \in O\!\bigl(n \cdot 4^{n} \cdot (n+1)! \cdot (n+2)!\bigr),
\]
and the inclusions of the boundary components induce compatible maps on fundamental groups forming a commutative diagram with $\pi_{1}(W)$ and $\Gamma$:
 \[
\adjustbox{scale=1, center}{%
\begin{tikzcd}
\pi_{1}(M_{+}) \arrow[d, hook, "i_{+*}"] \arrow[rd, hook, "i_{W*} \circ i_{+*}"] \arrow[r, hook, "\varphi" ]  & G \arrow[rd, hook, "i_G"]  & \\
\pi_{1}(W) \arrow[r, "i_{W*}"] & \G = \pi_{1}(W \cup C M_{-}) \arrow[r, hook, "i_{\G}"] & \Gamma := G \ast_{\pi_1(M_{+})} \G = \textup{Asph}_M(G) \\
\pi_{1}(M_{-}) \arrow[u, hook, "i_{-*}"] \arrow[ru, "\times 0"']
& &
\end{tikzcd}
}
\]

By the $L^2$-induction property (see, for instance,~\cite[Eq.~(2.3)]{Cheeger-Gromov:1985-1},~\cite[p.~253]{Lueck:2002-1},~\cite[Prop.~5.13]{Cochran-Orr-Teichner:1999-1}), both faithful representations $\varphi$ and $i_G\circ\varphi$ give the same $L^2$ $\rho$-invariant.
On the other hand, the representation of $\pi_1(M_-)$ in $\Gamma$ is trivial, so its $L^2$ $\rho$-invariant is zero.
Therefore the bordism formula gives
\[
\rho^{(2)}(M)
=
\operatorname{sign}^{(2)}_{\Gamma}(W)
-
\operatorname{sign}(W).
\]

The ordinary and $L^2$-signatures satisfy
\[
\bigl|\sign(W)\bigr|\leq b_{2k}(W)
\qquad\text{and}\qquad
\bigl|\sign^{(2)}_{\Gamma}(W)\bigr|\leq b^{(2)}_{2k}(W).
\]
Choose a PL triangulation $T$ of $W$ with $f_{4k}(T)=\Delta(W)$.
Since both the ordinary and $L^2$-Betti numbers are bounded by the number of $2k$-simplices of $T$, we obtain
\[
\bigl|\rho^{(2)}(M)\bigr|
\leq
b^{(2)}_{2k}(W)+b_{2k}(W)
\leq
2\cdot f_{2k}(T).
\]
Since $T$ is a pure $4k$-dimensional simplicial complex, every $2k$-simplex of $T$ is a face of a $4k$-simplex, while every $4k$-simplex has exactly $\binom{4k+1}{2k+1}$ faces of dimension $2k$.
Consequently,
\[
f_{2k}(T)
\leq
\binom{4k+1}{2k+1}\cdot f_{4k}(T)
=
\binom{4k+1}{2k+1}\cdot\Delta(W).
\]
Since $n=4k-1$, Lemma~\ref{lem:geometric} therefore gives the explicit estimate
\[
\bigl|\rho^{(2)}(M)\bigr|
\leq
2\cdot
\binom{n+2}{(n+3)/2}
\cdot
(n+3)
\cdot
4^n
\cdot
(n+1)!
\cdot
(n+2)!
\cdot
\Delta(M).
\]
Finally, the central binomial coefficient estimate gives $\binom{n+2}{(n+3)/2}=O\!\bigl(2^n/\sqrt{n+1}\bigr)$, and hence
\[
\bigl|\rho^{(2)}(M)\bigr|
\leq
C_{\mathrm{new}}(n)\cdot\Delta(M),
\qquad
C_{\mathrm{new}}(n)
=
O\!\left(
\frac{1}{\sqrt{n+1}}
\cdot
n
\cdot
8^n
\cdot
(n+1)!
\cdot
(n+2)!
\right).
\]
\qedhere
\end{proof}

%%%%%%%%%%%%%%%%%%%%%%%%%%%%%%%%%%%
%%%%%%%%%Section 6%%%%%%%%%%%%%%%%%
%%%%%%%%%%%%%%%%%%%%%%%%%%%%%%%%%%%

\section{Smooth analogues}
\label{sec:smooth}

Gromov's original linearity problem is formulated in the smooth category, where the complexity of a smooth manifold is measured by Riemannian volume under bounded-geometry assumptions.
By contrast, the results of this paper are formulated in the PL category and use the simplicial complexity $\Delta(-)$.
It is therefore natural to ask whether the quantitative asphericalization developed here admits a smooth analogue with comparable control on complexity.

There are at least two natural approaches to a smooth version of the present method.

\smallskip
\noindent
\textit{(1) Direct smooth asphericalization.}
One could seek a coherent system of smooth manifolds with faces $\mathcal X^{\mathrm{sm}}_{\leq n} = \{(X_k^{\mathrm{sm}},f_k^{\mathrm{sm}})\}_{k=0}^n$ satisfying smooth analogues of the face-coherence, collar, degree-one, and asphericity conditions of Section~\ref{sec:asph}, together with smooth reflection data corresponding to the toroidal construction of Section~\ref{sec:toroidal-simplex}.
The resulting blocks would then be glued directly along their smooth faces.

For a quantitative theory, however, topological asphericity alone would not suffice.
One would need to choose Riemannian metrics on the blocks with uniformly controlled local geometry and arrange the metrics and their normal derivatives compatibly along all distinguished faces.
The smoothing of corners produced by the gluings would also have to preserve these bounds.
Obtaining such a system while retaining effective control of the dimension-dependent constants is a separate problem.
A successful construction would provide a smooth counterpart of the toroidal asphericalization, in the spirit of smooth hyperbolization constructions such as~\cite{Davis-Januszkiewicz:1991-1,Ontaneda:2020}.

\smallskip
\noindent
\textit{(2) Quantitatively smoothing the PL construction.}
A different approach would begin with a smooth manifold $(M,g)$ of bounded local geometry, triangulate it quantitatively, apply the PL asphericalization developed in this paper, and then attempt to smooth the resulting PL bordism.

More precisely, quantitative triangulation results such as~\cite[Theorem~3]{Boissonnat-Dyer-Ghosh:2018-1} produce, under bounded geometry, triangulations $K$ for which the number of top-dimensional simplices is bounded by
\[
\#K^{(n)}
\leq
C_1(n) \cdot \operatorname{vol}(M,g),
\]
together with suitable local combinatorial control.
Applying the present PL construction would then produce a PL bordism whose number of simplices is bounded linearly in $\#K^{(n)}$.

The difficulty lies in the passage back to the smooth category.
First, one must know that the resulting PL bordism admits a smoothing compatible with the prescribed smooth structure on its boundary.
Second, for a quantitative conclusion, one would need such a smoothing with bounded local geometry and with volume controlled by the simplicial complexity of the PL bordism.
The smoothing theory of Hirsch--Mazur and the quantitative PL-to-smooth methods used in~\cite{Chambers-Dotterrer-Manin-Weinberger:2018-1, Manin-Weinberger:2023-1,Cha-Lim:2024-1} suggest a possible framework for this approach.

In particular, even if one obtained, for each fixed $n$, inequalities
of the form
\[
V(W)
\leq
C_{\mathrm{smooth}}(n) \cdot \Delta(W)
\leq
C_{\mathrm{smooth}}(n) \cdot C_{\mathrm{PL}}(n) \cdot \Delta(M)
\leq
C'(n) \cdot V(M),
\]
this would establish linearity in each fixed dimension but would not by itself preserve the factorial asymptotic growth of the constant proved in the PL category.
For that stronger conclusion, one would additionally need sufficiently sharp control of the growth of $C_{\mathrm{smooth}}(n)$ as $n\to\infty$.

Thus developing either a direct smooth toroidal asphericalization or a quantitative smoothing procedure for the PL construction, with effective control of the resulting dimension-dependent constants, appears to be a natural direction for further investigation.

%%%%%%%%%%%%%%%%%%%%%%%5
%%%%%%%%%%%%%%%%%%%%%%%%
%%%%%%%%%%%%%%%%%%%%%%%%

\appendix

\section{Quantitative comparison with the previous bordism constructions}
\label{app:quantitative-comparison}

The purpose of this appendix is to justify the quantitative comparison stated in the Introduction.
We measure the PL complexity of an $n$-manifold throughout by the minimum number of $n$-simplices in a PL triangulation.
In particular, we do not obtain the estimate for the acyclic-group construction by first using the total-simplices convention of~\cite{Cha-Lim:2024-1} and then converting conventions; instead, we trace the construction of~\cite{Cha-Lim:2024-1} from an input triangulation with a prescribed number of top-dimensional simplices.

\subsection{The quantitative Baumslag--Dyer--Heller chain homotopy}

We begin with the algebraic input to the construction of~\cite{Cha-Lim:2024-1}.
The improved chain null-homotopy of~\cite[Theorem~4.3]{Lim:2022-1} is controlled in degree $m$ by the sequence $c(m)$ determined by $c(0)=0$ and
\[
c(m)
=
2^{m+1}-1
+
\sum_{k=1}^{m-1}
c(k)
\binom{m+1}{m-k}.
\]
The estimate obtained in~\cite{Lim:2022-1} gives
\[
c(m)
=
O\!\left(
\left(\frac{2}{e}\right)^m
\cdot
m^{m+3/2}
\right),
\]
and hence, for $m\geq1$,
\[
\log c(m)
=
O\!\bigl(m\log(m+2)\bigr).
\]

The chain homotopy in~\cite[Theorem~4.3]{Lim:2022-1} has Moore chains as its source, whereas the skeleton-lowering construction of~\cite[Proposition~4.2]{Cha-Lim:2024-1} is formulated on the cellular chain complex of the simplicial classifying space.
As in the proof of~\cite[Corollary~3.2]{Cha-Lim:2024-1}, let $g\colon C_*(BG)\longrightarrow\mathbb ZBG_*$ be the standard chain-homotopy inverse to the normalization projection.
In dimensions at most $n$, the construction used there gives $\|g\| \leq (2n+3)^{n+1}$.
Thus, after composing the chain homotopy of~\cite{Lim:2022-1} with $g$, we may use the uniform bound
\[
Q_n
:=
(2n+3)^{n+1}
\cdot
\max_{0\leq m\leq n}c(m)
\]
for the partial cellular chain null-homotopies appearing in the construction of~\cite{Cha-Lim:2024-1}.
Consequently, for $n\geq1$,
\[
\log Q_n
=
O\!\bigl(n\log(n+2)\bigr).
\]

\subsection{The Cha--Lim skeleton-lowering construction}

We now recall the construction of~\cite[Proposition~4.2]{Cha-Lim:2024-1} and trace its complexity in terms of top-dimensional simplices.
Fix $1\leq p\leq n$, let $M_p$ be a closed triangulated $n$-manifold, and suppose that $\xi_p\colon M_p\longrightarrow L$ is a simplicial-cellular map to a simplicial-cell complex $L$ of dimension $p$, where $L$ is a subcomplex of another simplicial-cell complex $K$.
Suppose that $P\colon C_*(L)\longrightarrow C_{*+1}(K)$ is a partial chain null-homotopy in dimensions $p$ and $p-1$ satisfying $P\partial+\partial P=i$ where $i\colon C_*(L)\longrightarrow C_*(K)$ is induced by the inclusion $L\subseteq K$, and $\|P\|\leq q$ where $\|P\|$ denotes the $\ell^1$ operator norm with respect to the simplex bases.
Write $N_p:=f_n(M_p)$ for the number of $n$-simplices in the given triangulation of $M_p$.

\subsubsection*{The inverse-image manifolds $Y_\sigma$.}
For every $p$-simplex $\sigma$ of $L$, let $\widehat\sigma$ denote its barycenter and set $Y_\sigma := \xi_p^{-1}(\widehat\sigma)$.
The simplicial-cellular transversality theorem of~\cite{Cha-Lim:2024-1} shows that $Y_\sigma$ is a closed PL $(n-p)$-submanifold and has a product regular neighborhood $Y_\sigma\times\sigma_0 \subset M_p$, where $\sigma_0\subset\operatorname{int}\sigma$ is the distinguished $p$-simplex neighborhood of $\widehat\sigma$.

More explicitly, if a simplex $A\subset M_p$ maps onto $\sigma$ and the vertices of $A$ are partitioned according to the vertices of $\sigma$, then $A=A_0*\cdots*A_p$ and $Y_\sigma\cap A \cong A_0\times\cdots\times A_p$.
In particular, the cell structure on $\bigcup_\sigma Y_\sigma$ used in~\cite{Cha-Lim:2024-1} has at most one cell associated with each simplex of the source triangulation, and each such cell has at most $(n+1)^{n+1}$ vertices.

\subsubsection*{The inverse-image cobordisms $Z_\tau$.}
For every $(p-1)$-simplex $\tau$ of $L$, Cha and the author construct a properly embedded PL $(n-p+1)$-submanifold $Z_\tau$ whose boundary is expressed in terms of parallel copies of the $Y_\sigma$.
To construct it, each $p$-simplex $\sigma$ of $L$ is first subdivided by inserting the simplex $\sigma_0$ around its barycenter and triangulating the complement of $\operatorname{int}\sigma_0$ by a pulling triangulation.
The complement of $\operatorname{int}\sigma_0$ consists of $p+1$ $p$-cells, each having at most $2p$ vertices.
Thus the number of top-dimensional simplices in this fixed subdivision of a $p$-simplex is at most
$
D_p
:=
1
+
(p+1)\cdot 2^{2p}$.

The corresponding subdivision is then pulled back to $M_p$.
If $B$ is an $n$-simplex of $M_p$ mapping onto a $p$-simplex and $\theta$ is a top-dimensional simplex in the preceding subdivision, the pullback cell $\eta_B^{-1}(\theta)$ is a convex $n$-cell defined by at most $n+p+2$ affine inequalities.
The proof of the quantitative subdivision lemma in~\cite{Cha-Lim:2024-1} therefore gives at most $2^{n+p+2}$ vertices for this cell.
Since the pulling triangulation introduces no new vertices, the number of $n$-simplices in the triangulation of this cell is at most $\binom{2^{n+p+2}}{n+1}$.
Consequently, if
$s_{n,p}
:=
D_p
\cdot
\binom{2^{n+p+2}}{n+1}$,
then the subdivision of $M_p$ used in the construction of the $Z_\tau$ has at most $s_{n,p}\cdot N_p$ top-dimensional simplices.
Uniformly for $1\leq p\leq n$, we have
$
\log s_{n,p}
=
O(n^2)$.

Let $u_n:=2^{n+1}-1$.
Since every simplex of a pure $n$-dimensional simplicial complex is a face of an $n$-simplex, a triangulation with $N$ top-dimensional simplices has at most $u_n\cdot N$ simplices altogether.
It follows that the cell structures on $\bigcup_\sigma Y_\sigma$, on the exterior of their product neighborhoods, and on the original union $\bigcup_\tau Z_\tau$ may all be taken to have at most $u_n \cdot s_{n,p} \cdot N_p$ cells.

In the oriented case, if
$
\partial\sigma
=
\sum_\tau d_{\tau\sigma}\tau,
$
the $Z_\tau$ satisfy
$
\partial Z_\tau
=
\bigcup_\sigma d_{\tau\sigma}\cdot Y_\sigma
$
algebraically.
The initial geometric boundary may contain additional oppositely oriented parallel copies of some $Y_\sigma$, and~\cite{Cha-Lim:2024-1} removes each canceling pair by adjoining a copy of $Y_\sigma\times I$.
As observed in the proof of~\cite[Proposition~4.2]{Cha-Lim:2024-1}, the cells contributed by these additional cylinders are at most three times the number of cells already present in the corresponding $Z_\tau$.
Hence the modified union of the $Z_\tau$ has at most $4 \cdot u_n \cdot s_{n,p} \cdot N_p$ cells.

\subsubsection*{The algebraic bordisms $V_\sigma$.}
For a $p$-simplex $\sigma$ of $L$, write
\[
\partial\sigma
=
\sum_\tau d_{\tau\sigma}\tau,
\qquad
P\sigma
=
\sum_\mu r_{\mu\sigma}\mu,
\qquad
P\tau
=
\sum_\eta s_{\eta\tau}\eta.
\]
Evaluating the chain-homotopy identity $P\partial+\partial P-i=0$ on $\sigma$ gives
\[
\left(
\sum_{\eta,\tau}
s_{\eta\tau}d_{\tau\sigma}\cdot\eta
+
\sum_\mu
r_{\mu\sigma}\cdot\partial\mu
\right)
-
\sigma
=
0.
\]
Since $\sum_\tau |d_{\tau\sigma}|=p+1$, $\sum_\eta |s_{\eta\tau}|\leq q$, and $\sum_\mu |r_{\mu\sigma}|\leq q$, the first sum contains, with multiplicity, at most $(p+1)q$ signed $p$-simplices.
After expanding each boundary $\partial\mu$, the second sum contains at most $(p+2)q$ signed $p$-simplices.
Including the final copy of $\sigma$, the total number of signed $p$-simplices in the relation is therefore at most $(2p+3)q+1 \leq (2n+3)q+1$.
Set $\nu_n(q):=(2n+3)q+1$.
The manifold $V_\sigma$ is constructed by taking the product cobordism on these signed $p$-simplices and attaching one $1$-handle $\Delta^p\times I$ for each algebraically canceling pair.
Thus there are at most $\nu_n(q)$ product pieces and at most $\lfloor\nu_n(q)/2\rfloor$ cancellation handles.
Since the standard cell structure of $\Delta^p\times I$ has at most $3\cdot u_n$ cells, we may take
\[
b_n(q)
:=
3
\cdot
u_n
\cdot
\left(
\nu_n(q)
+
\left\lfloor\frac{\nu_n(q)}{2}\right\rfloor
\right)
\]
as an upper bound for the number of cells of each $V_\sigma$.
In particular,
$
b_n(q)
=
\exp(O(n))
\cdot
(1+q)$.

Cha and the author then form
$
V
=
(-1)^{n-p}
\bigcup_\sigma
Y_\sigma\times V_\sigma$.
Using the product cell structures, the number of cells in $V$ is therefore at most
$
u_n
\cdot
s_{n,p}
\cdot
b_n(q)
\cdot
N_p$.

\subsubsection*{The block $U$.}
The remaining terms in the chain-homotopy identity are realized geometrically by
\[
U
=
\left(
\bigcup_{\eta,\tau}
s_{\eta\tau}\cdot Z_\tau\times\Delta_\eta^p
\right)
\cup
\left(
\bigcup_{\mu,\sigma}
(-1)^{n-p}
r_{\mu\sigma}\cdot Y_\sigma\times\Delta_\mu^{p+1}
\right).
\]
The first family consists of products $Z_\tau\times\Delta_\eta^p$ and occurs with multiplicity controlled by
$
\sum_\eta |s_{\eta\tau}|
\leq
q$.

Since the modified union of the $Z_\tau$ has at most $4\cdot u_n\cdot s_{n,p}\cdot N_p$ cells and $\Delta^p$ has at most $u_n$ cells, these pieces contribute at most
$
4
\cdot
u_n^2
\cdot
q
\cdot
s_{n,p}
\cdot
N_p$ cells.

The second family consists of products $Y_\sigma\times\Delta_\mu^{p+1}$ and occurs with multiplicity controlled by
$
\sum_\mu |r_{\mu\sigma}|
\leq
q$.
Since the compatible cell structure on $\bigcup_\sigma Y_\sigma$ has at most $u_n\cdot s_{n,p}\cdot N_p$ cells and $\Delta^{p+1}$ has at most $u_{n+1}$ cells, these pieces contribute at most
$
u_n
\cdot
u_{n+1}
\cdot
q
\cdot
s_{n,p}
\cdot
N_p$ cells.

The product cell structures on $V$ and $U$ agree on $\partial_+V = \partial_-U$, and hence define a cell structure on $Z = V\cup_{\partial_+V=\partial_-U}U$.

\subsubsection*{The skeleton-lowering bordism $W_p$.}
The incoming boundary of $Z$ is
$
\partial_-Z
=
\bigcup_\sigma
Y_\sigma\times\Delta_\sigma^p$.
Using the product regular neighborhoods $Y_\sigma\times\sigma_0\subset M_p$, Cha and the author attach $Z$ to the product bordism $M_p\times I$ and define
$
W_p
=
(M_p\times I)
\cup_{\partial_-Z}
Z$.
Its outgoing boundary is
$
M_{p-1}
=
E
\cup_{\partial}
\partial_+Z$,
where
$
E
=
M_p
\setminus
\bigcup_\sigma
\operatorname{int}
\bigl(
Y_\sigma\times\sigma_0
\bigr)$.
The construction is arranged so that $M_{p-1}$ maps to the $(p-1)$-skeleton of the target.

The quantitative argument of~\cite{Cha-Lim:2024-1} does not triangulate $M_p\times I$, $V$, and $U$ independently.
Instead, compatible cell structures are first chosen on these three pieces, and these cell structures agree on all attaching subspaces.
Only after they have been assembled into $W_p$ is a single pulling triangulation applied to the resulting common polytopal cell complex, in the sense used in~\cite[Lemma~2.6]{Cha-Lim:2024-1}.
This compatibility is important for the iteration below.

The cell structure on $M_p$ used above has at most $u_n\cdot s_{n,p}\cdot N_p$ cells, so the product cell structure on $M_p\times I$ has at most
$
3
\cdot
u_n
\cdot
s_{n,p}
\cdot
N_p$ cells.
Combining the preceding estimates, the common cell structure on $W_p$ therefore has at most
$
E_{n,p}(q)
\cdot
N_p$ cells, where
\[
E_{n,p}(q)
:=
s_{n,p}
\Bigl( 3u_n
+
u_n\cdot b_n(q)
+
4u_n^2\cdot q
+
u_n \cdot u_{n+1}\cdot q
\Bigr).
\]
Since $\log s_{n,p}=O(n^2)$ and $b_n(q)=\exp(O(n))\cdot(1+q)$, we have, uniformly for $1\leq p\leq n$,
\[
\log E_{n,p}(q)
=
O(n^2)
+
O\!\bigl(\log(1+q)\bigr).
\]

\subsection{The top-dimensional complexity of one skeleton-lowering step}

It remains to pass from the common cell structure on $W_p$ to the pulling triangulation used in~\cite{Cha-Lim:2024-1}.
We retain here the vertex estimates used in that construction rather than optimizing them.

The cells $Y_\sigma\cap A$ arising in simplicial-cellular transversality have at most $(n+1)^{n+1}$ vertices, and the pullback cells appearing in the subdivision used to construct the $Z_\tau$ have at most $2^{2n+2}$ vertices.
The remaining cells are obtained from these cells and from standard simplices by taking products with simplices of dimension at most $n+1$ or with an interval.
Set
\[
\alpha_n
:=
\max
\left\{
(n+1)^{n+1},
2^{2n+2},
2(n+2)
\right\}
\]
and
$
\beta_n
:=
\bigl(
2(n+2)\cdot \alpha_n
\bigr)^2$.
The product-cell estimates used in~\cite{Cha-Lim:2024-1} show that every cell of the common cell structure on $W_p$ has at most $\beta_n$ vertices.
Moreover,
$
\log \beta_n
=
O\!\bigl(n\log(n+2)\bigr)$.

The pulling triangulation introduces no new vertices.
Therefore every $(n+1)$-cell containing at most $\beta_n$ vertices contributes at most
$
\Theta_n
:=
\binom{\beta_n}{n+2}
$
simplices of dimension $n+1$.
It follows that
$
f_{n+1}(W_p)
\leq
\Lambda_{n,p}(q)
\cdot
N_p$,
where
$
\Lambda_{n,p}(q)
:=
\Theta_n
\cdot
E_{n,p}(q)$.
Since
\[
\log \Theta_n
\leq
(n+2)\log \beta_n
=
O\!\bigl(n^2\log(n+2)\bigr),
\]
we obtain
\[
\log \Lambda_{n,p}(q)
=
O\!\bigl(n^2\log(n+2)\bigr)
+
O\!\bigl(\log(1+q)\bigr).
\]

We now use the uniform chain-homotopy bound $Q_n$ obtained above and set
\[
\Lambda_{n,p}
:=
\Lambda_{n,p}(Q_n),
\qquad
\Lambda_n
:=
\max_{1\leq p\leq n}\Lambda_{n,p}.
\]
Since
$
\log Q_n
=
O\!\bigl(n\log(n+2)\bigr)$, the preceding estimate gives
$
\log \Lambda_n
=
O\!\bigl(n^2\log(n+2)\bigr)$.
Thus every application of~\cite[Proposition~4.2]{Cha-Lim:2024-1} may be carried out so that
\[
f_{n+1}(W_p)
\leq
\Lambda_n
\cdot
f_n(M_p).
\]

\subsection{Iteration over the skeleta}

The preceding one-step estimate is not yet the quantitative estimate for the final Cha--Lim bordism.
The construction starts with $M_n=M$ and applies the skeleton-lowering procedure successively,
\[
M_n
\rightsquigarrow
M_{n-1}
\rightsquigarrow
\cdots
\rightsquigarrow
M_1
\rightsquigarrow
M_0,
\]
where
\[
W_p\colon M_p\rightsquigarrow M_{p-1},
\qquad
p=n,n-1,\ldots,1.
\]
The triangulation produced on the outgoing boundary $M_{p-1}$ of $W_p$ is the triangulation used as the input to the next skeleton-lowering step.
Thus the complexity increase at one stage is inherited by all subsequent stages.

Choose the initial triangulation of $M=M_n$ so that
$
N_n
:=
f_n(M_n)
=
\Delta(M)$,
and put
$
N_p
:=
f_n(M_p)$
for the triangulations occurring in the induction.
Since $M_{p-1}$ is a boundary component of the triangulated $(n+1)$-manifold $W_p$, every $n$-simplex of $M_{p-1}$ is a facet of an $(n+1)$-simplex of $W_p$.
An $(n+1)$-simplex has $n+2$ facets, and therefore
\[
N_{p-1}
\leq
(n+2)
\cdot
f_{n+1}(W_p)
\leq
(n+2)
\cdot
\Lambda_{n,p}
\cdot
N_p.
\]
The simplicial-cellular approximation used between successive applications of~\cite[Proposition~4.2]{Cha-Lim:2024-1} changes the structure map by homotopy on the same triangulated manifold, so no additional subdivision factor is introduced at this point.

Iterating the preceding inequality gives
\[
N_p
\leq
\left(
\prod_{j=p+1}^{n}
(n+2)
\cdot
\Lambda_{n,j}
\right)
\cdot
\Delta(M).
\]
Consequently,
\[
f_{n+1}(W_p)
\leq
\Lambda_{n,p}
\cdot
\left(
\prod_{j=p+1}^{n}
(n+2)
\cdot
\Lambda_{n,j}
\right)
\cdot
\Delta(M).
\]

The final bordism is obtained by concatenation,
\[
W
=
W_n
\cup_{M_{n-1}}
W_{n-1}
\cup_{M_{n-2}}
\cdots
\cup_{M_1}
W_1.
\]
The triangulations on the intermediate boundary manifolds already agree, so this final gluing requires no additional subdivision.
No top-dimensional simplices are identified in the gluing, and hence
$
f_{n+1}(W)
=
\sum_{p=1}^{n}
f_{n+1}(W_p)$.
It follows that
$
f_{n+1}(W)
\leq
C_{\mathrm{acy}}(n)
\cdot
\Delta(M)$,
where one may take
\[
C_{\mathrm{acy}}(n)
:=
\sum_{p=1}^{n}
\Lambda_{n,p}
\cdot
\prod_{j=p+1}^{n}
\bigl(
(n+2)\cdot \Lambda_{n,j}
\bigr).
\]
In particular, since $\Lambda_{n,p}\leq \Lambda_n$,
\[
C_{\mathrm{acy}}(n)
\leq
\Lambda_n
\sum_{r=0}^{n-1}
\bigl(
(n+2)\cdot \Lambda_n
\bigr)^r
\leq
n
\cdot
(n+2)^{n-1}
\cdot
\Lambda_n^n.
\]
Since
$
\log \Lambda_n
=
O\!\bigl(n^2\log(n+2)\bigr)$,
we obtain that $C_{\mathrm{acy}}(n) \leq \exp\!\bigl(O(n^3\log(n+2)) \bigr)$.
Thus the construction of~\cite{Cha-Lim:2024-1}, combined with the improved quantitative Baumslag--Dyer--Heller homotopy of~\cite{Lim:2022-1}, gives a bordism satisfying $\Delta(W) \leq \exp\!\bigl(O(n^3\log(n+2)) \bigr) \cdot \Delta(M)$.

\subsection{Comparison with relative hyperbolization and direct asphericalization}

We now compare the preceding bound with the two geometric constructions available for faithful representations.
For the relative-hyperbolization construction of~\cite{Lim-Weinberger:2023-1}, let
\[
z(m)
:=
3^{m-1}
\cdot
m!
\cdot
(m-1)!^2
\cdot
(m-2)!^2
\cdots
(3!)^2
\cdot
2!.
\]
The suspension of an $n$-manifold with $N$ top-dimensional simplices has $2N$ top-dimensional $(n+1)$-simplices.
The barycentric subdivision contributes a factor $(n+2)!$, and each resulting $(n+1)$-simplex is replaced by a hyperbolized $(n+1)$-simplex containing $z(n+1)$ top-dimensional simplices.
Since the relative-hyperbolization bordism is obtained as a subcomplex of the resulting hyperbolization, for the purpose of this comparison we take the explicit construction constant
$
C_{\mathrm{hyp}}(n)
:=
2
\cdot
(n+2)!
\cdot
z(n+1).
$
Since
\[
\log z(n+1)
=
n\log 3
+
\log((n+1)!)
+
2
\sum_{j=3}^{n}
\log(j!)
+
O(1),
\]
Stirling's formula gives $\log C_{\mathrm{hyp}}(n) = n^2\log n + O(n^2)$.

For the direct toroidal asphericalization constructed in the present paper, Lemma~\ref{lem:geometric} gives
\[
C_{\mathrm{asph}}(n)
:=
(n+3)
\cdot
4^n
\cdot
(n+1)!
\cdot
(n+2)!.
\]
Therefore $\log C_{\mathrm{asph}}(n) = 2n\log n + O(n)$.
In particular,
\[
\frac{
\log C_{\mathrm{asph}}(n)
}{
\log C_{\mathrm{hyp}}(n)
}
\longrightarrow
0
\qquad
\text{as }n\longrightarrow\infty.
\]

The three constructions should be compared together with their respective hypotheses.
The acyclic-group construction of~\cite{Cha-Lim:2024-1} applies to arbitrary representations and the preceding analysis gives the admissible estimate
\[
C_{\mathrm{acy}}(n)
\leq
\exp\!\bigl(
O(n^3\log(n+2))
\bigr).
\]
The relative-hyperbolization construction of~\cite{Lim-Weinberger:2023-1} applies to faithful representations and satisfies
\[
\log C_{\mathrm{hyp}}(n)
=
n^2\log n
+
O(n^2).
\]
The direct asphericalization of the present paper also applies to faithful representations, but satisfies
\[
\log C_{\mathrm{asph}}(n)
=
2n\log n
+
O(n).
\]
Thus, in the common setting of faithful representations, direct asphericalization replaces the superfactorial dimension dependence of relative hyperbolization by factorial-type growth.
The acyclic-group method has the greater generality of allowing arbitrary representations, while its skeleton-by-skeleton realization propagates the quantitative cost of each stage through all subsequent stages.

\bibliographystyle{amsalpha}
\bibliography{research}{}

@preamble{"\def\cprime{$'$} "}

@incollection {Scott-Wall:1979,
    AUTHOR = {Scott, Peter and Wall, Terry},
     TITLE = {Topological methods in group theory},
 BOOKTITLE = {Homological group theory ({P}roc. {S}ympos., {D}urham, 1977)},
    SERIES = {London Math. Soc. Lecture Note Ser.},
    VOLUME = {36},
     PAGES = {137--203},
 PUBLISHER = {Cambridge Univ. Press, Cambridge-New York},
      YEAR = {1979},
      ISBN = {0-521-22729-1},
   MRCLASS = {57M05 (20E06)},
  MRNUMBER = {564422},
MRREVIEWER = {William\ H.\ Jaco},
}

@article {Williams:1963,
    AUTHOR = {Williams, R. F.},
     TITLE = {A useful functor and three famous examples in topology},
   JOURNAL = {Trans. Amer. Math. Soc.},
  FJOURNAL = {Transactions of the American Mathematical Society},
    VOLUME = {106},
      YEAR = {1963},
     PAGES = {319--329},
      ISSN = {0002-9947},
   MRCLASS = {55.25},
  MRNUMBER = {146832},
MRREVIEWER = {H. Suzuki},
       DOI = {10.2307/1993773},
       URL = {https://doi-org.proxy.library.ucsb.edu:9443/10.2307/1993773},
}

@article {Ontaneda:2020,
    AUTHOR = {Ontaneda, Pedro},
     TITLE = {Riemannian hyperbolization},
   JOURNAL = {Publ. Math. Inst. Hautes \'Etudes Sci.},
  FJOURNAL = {Publications Math\'ematiques. Institut de Hautes \'Etudes
              Scientifiques},
    VOLUME = {131},
      YEAR = {2020},
     PAGES = {1--72},
      ISSN = {0073-8301,1618-1913},
   MRCLASS = {53C21 (57R20)},
  MRNUMBER = {4106793},
MRREVIEWER = {Thilo\ Kuessner},
       DOI = {10.1007/s10240-020-00113-1},
       URL = {https://doi.org/10.1007/s10240-020-00113-1},
}

@article {Davis-Januszkiewicz-Weinberger:2001,
    AUTHOR = {Davis, Michael W. and Januszkiewicz, Tadeusz and Weinberger,
              Shmuel},
     TITLE = {Relative hyperbolization and aspherical bordisms: an addendum
              to ``{H}yperbolization of polyhedra'' [{J}.\ {D}ifferential
              {G}eom.\ {\bf 34} (1991), no.\ 2, 347--388; {MR}1131435
              (92h:57036)] by {D}avis and {J}anuszkiewicz},
   JOURNAL = {J. Differential Geom.},
  FJOURNAL = {Journal of Differential Geometry},
    VOLUME = {58},
      YEAR = {2001},
    NUMBER = {3},
     PAGES = {535--541},
      ISSN = {0022-040X,1945-743X},
   MRCLASS = {57Q05 (53C20)},
  MRNUMBER = {1906785},
       URL = {http://projecteuclid.org/euclid.jdg/1090348358},
}

@article {Charney-Davis:1995, 
    AUTHOR = {Charney, Ruth M. and Davis, Michael W.},
     TITLE = {Strict hyperbolization},
   JOURNAL = {Topology},
  FJOURNAL = {Topology. An International Journal of Mathematics},
    VOLUME = {34},
      YEAR = {1995},
    NUMBER = {2},
     PAGES = {329--350},
      ISSN = {0040-9383},
   MRCLASS = {57Q05 (53C23)},
  MRNUMBER = {1318879},
MRREVIEWER = {Colin\ C.\ Adams},
       DOI = {10.1016/0040-9383(94)00027-I},
       URL = {https://doi.org/10.1016/0040-9383(94)00027-I},
}

@incollection {Gromov:1987-1,
    AUTHOR = {Gromov, M.},
     TITLE = {Hyperbolic groups},
 BOOKTITLE = {Essays in group theory},
    SERIES = {Math. Sci. Res. Inst. Publ.},
    VOLUME = {8},
     PAGES = {75--263},
 PUBLISHER = {Springer, New York},
      YEAR = {1987},
      ISBN = {0-387-96618-8},
   MRCLASS = {20F32 (20F06 20F10 22E40 53C20 57R75 58F17)},
  MRNUMBER = {919829},
MRREVIEWER = {Christopher\ W.\ Stark},
       DOI = {10.1007/978-1-4613-9586-7\_3},
       URL = {https://doi.org/10.1007/978-1-4613-9586-7_3},
}

@article{Cha:2014-1,
	author = {Cha, Jae Choon},
	doi = {10.1002/cpa.21597},
	fjournal = {Communications on Pure and Applied Mathematics},
	issn = {0010-3640},
	journal = {Comm. Pure Appl. Math.},
	mrclass = {57R57 (58J22)},
	mrnumber = {3493628},
	mrreviewer = {Michael S. Farber},
	number = {6},
	pages = {1154--1209},
	title = {A topological approach to {C}heeger-{G}romov universal bounds for von {N}eumann {$\rho$}-invariants},
	url = {http://dx.doi.org/10.1002/cpa.21597},
	volume = {69},
	year = {2016}}

@article{Chang-Weinberger:2003-1,
	author = {Chang, Stanley and Weinberger, Shmuel},
	fjournal = {Geometry and Topology},
	issn = {1465-3060},
	journal = {Geom. Topol.},
	mrclass = {57R67 (46L80 58J20 58J28)},
	mrnumber = {1988288 (2004c:57052)},
	mrreviewer = {Thomas Schick},
	pages = {311--319 (electronic)},
	title = {On invariants of {H}irzebruch and {C}heeger-{G}romov},
	volume = {7},
	year = {2003}}

@article{Cheeger-Gromov:1985-1,
	author = {Cheeger, Jeff and Gromov, Mikhael},
	coden = {JDGEAS},
	fjournal = {Journal of Differential Geometry},
	issn = {0022-040X},
	journal = {J. Differential Geom.},
	mrclass = {58G12 (53C20)},
	mrnumber = {MR806699 (87d:58136)},
	mrreviewer = {J{\'o}zef Dodziuk},
	number = {1},
	pages = {1--34},
	title = {Bounds on the von {N}eumann dimension of {$L\sp 2$}-cohomology and the {G}auss-{B}onnet theorem for open manifolds},
	volume = {21},
	year = {1985}}

@book {Cheeger-Gromov:1985-2,
    AUTHOR = {Cheeger, Jeff and Gromov, Mikhael},
    TITLE = {On the characteristic numbers of complete manifolds of bounded curvature and finite volume},
    SERIES = {H.E. Rauch Memorial Volume: Differential Geometry and Complex Analysis},
    VOLUME = {1},
    PUBLISHER = {I. Chavel and H. M. Farkas, Eds., Springer, Berlin-New York},
      YEAR = {1985},
     PAGES = {115--154},
      ISBN = {},
   MRCLASS = {)},
  MRNUMBER = {780040},
MRREVIEWER = {}}

@article{Cochran-Orr-Teichner:1999-1,
	author = {Cochran, Tim D. and Orr, Kent E. and Teichner, Peter},
	coden = {ANMAAH},
	fjournal = {Annals of Mathematics. Second Series},
	issn = {0003-486X},
	journal = {Ann. of Math. (2)},
	mrclass = {57M25 (57M27 57Rxx 58Jxx)},
	mrnumber = {1973052},
	number = {2},
	pages = {433--519},
	title = {Knot concordance, {W}hitney towers and {$L\sp 2$}-signatures},
	volume = {157},
	year = {2003}}

@book{Hatcher:2002-1,
	address = {Cambridge},
	author = {Hatcher, Allen},
	isbn = {0-521-79160-X; 0-521-79540-0},
	mrclass = {55-01 (55-00)},
	mrnumber = {1867354 (2002k:55001)},
	mrreviewer = {Donald W. Kahn},
	pages = {xii+544},
	publisher = {Cambridge University Press},
	title = {Algebraic topology},
	year = {2002}}

@article{Kan-Thurston:1976-1,
	author = { Kan, Daniel M. and Thurston, William P.},
	fjournal = {Topology. An International Journal of Mathematics},
	issn = {0040-9383},
	journal = {Topology},
	mrclass = {55D20},
	mrnumber = {0413089 (54 \#1210)},
	mrreviewer = {J. P. May},
	number = {3},
	pages = {253--258},
	title = {Every connected space has the homology of a {$K(\pi ,1)$}},
	volume = {15},
	year = {1976}}

@book{Lueck:2002-1,
	address = {Berlin},
	author = {L{\"u}ck, Wolfgang},
	isbn = {3-540-43566-2},
	mrclass = {58J22 (19K56 46L80 57Q10 57R20 58J52)},
	mrnumber = {MR1926649 (2003m:58033)},
	mrreviewer = {Thomas Schick},
	pages = {xvi+595},
	publisher = {Springer-Verlag},
	series = {Ergebnisse der Mathematik und ihrer Grenzgebiete. 3. Folge. A Series of Modern Surveys in Mathematics [Results in Mathematics and Related Areas. 3rd Series. A Series of Modern Surveys in Mathematics]},
	title = {{$L\sp 2$}-invariants: theory and applications to geometry and {$K$}-theory},
	volume = {44},
	year = {2002}}

@incollection {Gromov:1999-1,
    AUTHOR = {Gromov, Mikhael},
     TITLE = {Quantitative homotopy theory},
 BOOKTITLE = {Prospects in mathematics ({P}rinceton, {NJ}, 1996)},
     PAGES = {45--49},
 PUBLISHER = {Amer. Math. Soc., Providence, RI},
      YEAR = {1999},
   MRCLASS = {57N65 (53C23 55P99)},
  MRNUMBER = {1660471},
MRREVIEWER = {Vagn Lundsgaard Hansen}}

@article {Chambers-Dotterrer-Manin-Weinberger:2018-1,
    AUTHOR = {Chambers, Gregory R. and Dotterrer, Dominic and Manin, Fedor
              and Weinberger, Shmuel},
     TITLE = {Quantitative null-cobordism},
      NOTE = {With an appendix by Manin and Weinberger},
   JOURNAL = {J. Amer. Math. Soc.},
  FJOURNAL = {Journal of the American Mathematical Society},
    VOLUME = {31},
      YEAR = {2018},
    NUMBER = {4},
     PAGES = {1165--1203},
      ISSN = {0894-0347},
   MRCLASS = {57Q20 (53C23 57R75)},
  MRNUMBER = {3836564},
MRREVIEWER = {Greg Friedman},
       DOI = {10.1090/jams/903},
       URL = {https://doi-org.proxy.library.ucsb.edu:9443/10.1090/jams/903}}

@article {Lim:2022-1,
    AUTHOR = {Lim, Geunho},
     TITLE = {Enhanced bounds for rho-invariants for both general and spherical 3-manifolds},
   JOURNAL = {J. Topol. Anal.},
  FJOURNAL = {Journal of Topology and Analysis},
    VOLUME = {16},
      YEAR = {2024},
    NUMBER = {3},
     PAGES = {409-459},
      ISSN = {1793-5253,1793-7167},
   MRCLASS = {},
  MRNUMBER = {},
       DOI = {10.1142/S1793525322500029},
       URL = {https:///dx.doi.org/10.1142/S1793525322500029},
}

@article {Lim-Weinberger:2023-1,
    AUTHOR = {Lim, Geunho and Weinberger, Shmuel},
     TITLE = {Bounds on {C}heeger--{G}romov invariants and simplicial
              complexity of triangulated manifolds},
   JOURNAL = {J. Reine Angew. Math.},
  FJOURNAL = {Journal f\"{u}r die Reine und Angewandte Mathematik. [Crelle's
              Journal]},
    VOLUME = {808},
      YEAR = {2024},
     PAGES = {271--297},
      ISSN = {0075-4102,1435-5345},
   MRCLASS = {99-06},
  MRNUMBER = {4708122},
       DOI = {10.1515/crelle-2024-0003},
       URL = {https://doi.org/10.1515/crelle-2024-0003},
}

@article {Boissonnat-Dyer-Ghosh:2018-1,
    AUTHOR = {Boissonnat, Jean-Daniel and Dyer, Ramsay and Ghosh, Arijit},
     TITLE = {Delaunay triangulation of manifolds},
   JOURNAL = {Found. Comput. Math.},
  FJOURNAL = {Foundations of Computational Mathematics. The Journal of the
              Society for the Foundations of Computational Mathematics},
    VOLUME = {18},
      YEAR = {2018},
    NUMBER = {2},
     PAGES = {399--431},
      ISSN = {1615-3375,1615-3383},
   MRCLASS = {57R05 (52B70 54B15)},
  MRNUMBER = {3777784},
MRREVIEWER = {Vladimir\ Aleksandrovich\ Klyachin},
       DOI = {10.1007/s10208-017-9344-1},
       URL = {https://doi.org/10.1007/s10208-017-9344-1},
}

@misc{Manin-Weinberger:2023-1,
	author = {Manin, Fedor and Weinberger, Shmuel},
	howpublished = {arXiv:2311.16389},
	title = {Quantitative {PL} bordism},
	year = {2023},
}

@article{Cha-Lim:2024-1,
  author  = {Cha, Jae Choon and Lim, Geunho},
  title   = {Quantitative bordism over acyclic groups and {Cheeger--Gromov} $\rho$-invariants},
  journal = {Geom. Topol.},
  volume  = {30},
  number  = {5},
  year    = {2026},
  pages   = {1899--1929},
  doi     = {10.2140/gt.2026.30.1899}
}

@article {Davis-Januszkiewicz:1991-1,
    AUTHOR = {Davis, Michael W. and Januszkiewicz, Tadeusz},
     TITLE = {Hyperbolization of polyhedra},
   JOURNAL = {J. Differential Geom.},
  FJOURNAL = {Journal of Differential Geometry},
    VOLUME = {34},
      YEAR = {1991},
    NUMBER = {2},
     PAGES = {347--388},
      ISSN = {0022-040X},
   MRCLASS = {57Q05 (53C20)},
  MRNUMBER = {1131435},
MRREVIEWER = {Werner Ballmann},
       URL = {http://projecteuclid.org.proxy.library.ucsb.edu:2048/euclid.jdg/1214447212},
}

@article {Hausmann:1981,
    AUTHOR = {Hausmann, Jean-Claude},
     TITLE = {On the homotopy of nonnilpotent spaces.},
   JOURNAL = {Math. Z.},
  FJOURNAL = {Mathematische Zeitschrift},
    VOLUME = {178},
      YEAR = {1981},
    NUMBER = {1},
     PAGES = {115--123},
      ISSN = {},
   MRCLASS = {},
  MRNUMBER = {},
MRREVIEWER = {},
       DOI = {},
       URL = {},
}

@article {Maunder:1981,
    AUTHOR = {Maunder, C. R. F.},
     TITLE = {A short proof of a theorem of {K}an and {T}hurston},
   JOURNAL = {Bull. London Math. Soc.},
  FJOURNAL = {The Bulletin of the London Mathematical Society},
    VOLUME = {13},
      YEAR = {1981},
    NUMBER = {4},
     PAGES = {325--327},
      ISSN = {0024-6093,1469-2120},
   MRCLASS = {55P20},
  MRNUMBER = {620046},
MRREVIEWER = {J.\ P.\ May},
       DOI = {10.1112/blms/13.4.325},
       URL = {https://doi.org/10.1112/blms/13.4.325},
}
\def\MR#1{}

\end{document}